\newtheorem{theorem}{Theorem}
\newtheorem{lemma}[theorem]{Lemma} 
\newtheorem{proposition}[theorem]{Proposition} 
\newtheorem{remark}[theorem]{Remark}
\newtheorem{definition}[theorem]{Definition}
\DeclareMathOperator{\Ran}{Ran}
\DeclareMathOperator{\trace}{tr}
\DeclareMathOperator*{\spann}{span}
\DeclareMathOperator*{\argmin}{arg\,min}
\DeclareMathOperator{\vecc}{vec}
\DeclareMathOperator{\rank}{rank}
\DeclareMathOperator{\R}{\mathbb{R}}
\DeclareMathOperator{\C}{\mathbb{C}}
\DeclareMathOperator{\N}{\mathbb{N}}
\DeclareMathOperator{\mat}{mat}
\DeclareMathOperator{\diag}{diag}
\DeclareMathOperator{\opt}{opt}
\title{Harmonic Mean Iteratively Reweighted Least Squares for Low-Rank Matrix Recovery}
\author{Christian K\"ummerle\thanks{Department of Mathematics, Technische Universit\"at M\"unchen, 
Boltzmannstr.~3,
85748 Garching/Munich,
Germany  E-mail: \texttt{christian.kuemmerle@ma.tum.de}} , Juliane Sigl\thanks{Department of Mathematics, Technische Universit\"at M\"unchen, 
Boltzmannstr.~3,
85748 Garching/Munich,
Germany  E-mail: \texttt{juliane.sigl@ma.tum.de}}
}
\begin{document}
\title{Harmonic Mean Iteratively Reweighted Least Squares for Low-Rank Matrix Recovery}



\maketitle

\begin{abstract}
We propose a new iteratively reweighted least squares (IRLS) algorithm for the recovery of a matrix $X \in \mathbb{C}^{d_1\times d_2}$ of rank $r \ll\min(d_1,d_2)$ from incomplete linear observations, solving a sequence of low complexity linear problems.
The easily implementable algorithm, which we call harmonic mean iteratively reweighted least squares (\texttt{HM-IRLS}), optimizes a non-convex Schatten-$p$ quasi-norm penalization to promote low-rankness and carries three major strengths, in particular for the matrix completion setting.
First, we observe a remarkable {global convergence behavior} of the algorithm's iterates to the low-rank matrix for relevant, interesting cases, for which any other state-of-the-art optimization approach fails the recovery.
Secondly, \texttt{HM-IRLS} exhibits an empirical recovery probability close to $1$ even for a number of measurements very close to the theoretical lower bound $r (d_1 +d_2 -r)$, i.e., already for \emph{significantly fewer linear observations} than any other tractable approach in the literature.
Thirdly, \texttt{HM-IRLS} exhibits a \emph{locally superlinear rate of convergence (of order $2-p$)} if the linear observations fulfill a suitable null space property. While for the first two properties we have so far only strong empirical evidence, we prove the third property as our main theoretical result.
\end{abstract}

%
\thispagestyle{plain}
\markboth{
}{
}


\section{Introduction} \label{sec_introduction}
The problem of recovering a low-rank matrix from incomplete linear measurements or observations has gained considerable attention in the last few years due to the omnipresence of low-rank models in different areas of science and applied mathematics. Low-rank models arise in a variety of areas such as system identification \citep{LiuHV13,LiuV10}, signal processing \citep{AhmedR15}, quantum tomography \citep{GrossLFBE10,Gross11} and phase retrieval \citep{CandesSV13,Candes13,GrossKK15}. An instance of this problem of particular importance, e.g., in recommender systems \citep{SrebroRJ05,GoldbergNOT92,CR09}, is the \emph{matrix completion} problem, where the measurements correspond to entries of the matrix to be recovered. 

Although the low-rank matrix recovery problem is NP-hard in general, several tractable algorithms have been proposed that allow for provable recovery in many important cases. The \emph{nuclear norm minimization} (\texttt{NNM}) approach \citep{Fazel02,CR09}, which solves a surrogate semidefinite program, is particularly well-understood. 
For \texttt{NNM}, recovery guarantees have been shown for a number of measurements on the order of the information theoretical lower bound $r(d_1+d_2-r)$, if $r$ denotes the rank of a $d_1 \times d_2$-matrix \citep{Recht10,CR09}; i.e., for a number of measurements $m \geq \rho r(d_1+d_2-r)$ with some oversampling constant $\rho \geq 1$. Even though \texttt{NNM} is solvable in polynomial time, it can be computationally very demanding if the problem dimensions are large, which is the case in many potential applications. Another issue is that although the number of measurements necessary for successful recovery by nuclear norm minimization is of \emph{optimal order}, it is not \emph{optimal}. More precisely, it turns out that the oversampling factor $\rho$ of nuclear norm minimization \emph{has to be much larger than the oversampling factor of some other, non-convex algorithmic approaches} \citep{ZhengL15,TannerWei13}. 

These limitations of convex relaxation approaches have led to a rapidly growing line of research discussing the advantages of non-convex optimization for the low-rank matrix recovery problem \citep{JainMD10,TannerWei13,HaldarH09,Jain13,WenYZ12,TannerWei16,Vandereycken13,WeiCCL16MatrixRecovery,TuBSSR15}. For several of these non-convex algorithmic approaches, recovery guarantees comparable to those of \texttt{NNM} have been derived \citep{CandesLS15,TuBSSR15,ZhengL15,SunL15}. Their advantage is a higher empirical recovery rate and an often more efficient implementation. While there are some results about global convergence of first-order methods minimizing a non-convex objective \citep{Ge16,Srebro16} so that a success of the method might not depend on a particular initialization, the assumptions of these results are not always optimal, e.g., in the scaling of the numbers of measurements $m$ in the rank $r$ \citep[Theorem 5.3]{Ge16}. In general, the success of many non-convex optimization approaches relies on a distinct, possibly expensive initialization step.
\subsection{Contribution of this paper} 
In this spirit, we propose a new iteratively reweighted least squares (IRLS) algorithm for the low-rank matrix recovery problem\footnote{The algorithm and partial results were presented at the 12th International Conference on Sampling Theory and Applications in Tallinn, Estonia, July 3--7, 2017. The corresponding conference paper has been published in its proceedings \citep{HMIRLS_SampTA}.}  that strives to minimize a non-convex objective function based on the Schatten-$p$ quasi-norm
\begin{equation} \label{def_Schattenp_min}
\min_X \| X \|_{S_p}^p \text{ subject to } \Phi (X) = Y,
\end{equation}
for $0 < p < 1$, where $\Phi: \C^{d_1 \times d_2} \to \C^m$ is the linear measurement operator and $Y \in \C^m$ is the data vector that define the problem. The overall strategy of the proposed IRLS algorithm is to mimic this minimization by a sequence of weighted least squares problems. This strategy is shared by the related previous algorithms of Fornasier, Rauhut \& Ward \citep{Fornasier11} and Mohan \& Fazel \citep{Mohan10} which minimize \eqref{def_Schattenp_min} by defining iterates as 
\begin{equation} \label{def_IRLS_col}
X^{(n+1)} = \min_X \|{W_L^{(n)}}^{\frac{1}{2}}X\|^2_{F} \text{ subject to } \Phi (X) = Y,
\end{equation}
where $W_L^{(n)} \approx (X^{(n)} X^{(n)*})^{\frac{p-2}{2}}$ is a so-called \emph{weight matrix} which \emph{reweights} the quadratic penalty by operating on the column space of the matrix variable. Thus, we call this column-reweighting type of IRLS algorithms \texttt{IRLS-col}. Due to the inherent symmetry, it is evident to conceive, still in the spirit of \citep{Fornasier11,Mohan10}, the algorithm \texttt{IRLS-row}
\begin{equation} \label{def_IRLS_row}
X^{(n+1)} = \min_X \|{W_R^{(n)}}^{\frac{1}{2}}X^*\|^2_{F} \text{ subject to } \Phi (X) = Y
\end{equation}
with $W_R^{(n)} \approx (X^{(n)*} X^{(n)})^{\frac{p-2}{2}}$, which reweights the quadratic penalty by acting on the \emph{row space} of the matrix variable. We note that even for square dimensions $d_1=d_2$, \texttt{IRLS-col} and \texttt{IRLS-row} do not coincide.

In this paper, as an important innovation, we propose the use of a different type of weight matrices, so-called \emph{harmonic mean weight matrices}, which can be interpreted as the harmonic mean of the matrices $W_L^{(n)}$ and $W_R^{(n)}$ above. This motivates the name harmonic mean iteratively reweighted least squares (\texttt{HM-IRLS}) for the corresponding algorithm. The harmonic mean of the weight matrices of \texttt{IRLS-col} and of \texttt{IRLS-row} in \texttt{HM-IRLS} is able to use the information in both the column and the row space of the iterates, and it also gives rise to a \emph{qualitatively better} behavior than the use of more obvious symmetrizations as, e.g., the arithmetic mean of weight matrices would allow for, both in theory and in practice. 


We argue that the choice of harmonic mean weight matrices as in \texttt{HM-IRLS} 
leads to an efficient algorithm for the low-rank matrix recovery problem with fast convergence and superior performance in terms of sample complexity, also compared to algorithms based on strategies different from IRLS.

On the one hand, we show that the accumulation points of the iterates of \texttt{HM-IRLS} converge to stationary points of a smoothed Schatten-$p$ functional under the linear constraint, as it is known for, e.g., \texttt{IRLS-col}, c.f. \citep{Fornasier11,Mohan10}. On the other hand, we extend the theoretical guarantees which are based on a Schatten-$p$ \emph{null space property} (NSP) of the measurement operator \citep{Oymak11,Foucart13}, to \texttt{HM-IRLS}.

Our main theoretical result is that \texttt{HM-IRLS} exhibits a locally superlinear convergence rate of order $2-p$ in the neighborhood of a low-rank matrix for the non-convexity parameter $0 < p < 1$ connected to the Schatten-$p$ quasinorm, if the measurement operator fulfills the mentioned NSP of sufficient order. For $p \ll 1$, this means that the convergence rate is \emph{almost quadratic}. 


Although parts of our theoretical results, as in the case of the IRLS algorithms algorithms of Fornasier, Rauhut \& Ward \citep{Fornasier11} and Mohan \& Fazel \citep{Mohan10}, do not apply to the matrix completion setting, due to the popularity of the problem and for reasons of comparability with other algorithms, we conduct numerical experiments to explore the empirical performance of \texttt{HM-IRLS} also for this setting. 
Surprisingly enough we observe that the theoretical results comply with our numerical experiments also for matrix completion. In particular, the theoretically predicted local convergence rate of order $2-p$ can be observed very precisely for this important measurement model as well (see \Cref{ratesrho2,ratesrho12,ratesrho1}). 

This local superlinear convergence rate \emph{is unprecedented} for IRLS variants such as \texttt{IRLS-col} and as those that use the arithmetic mean of the one-sided weight matrices: this means that neither can a superlinear rate be verified numerically, nor is it possible to show such a rate by our proof techniques for any other IRLS variant.

To the best of our knowledge, \texttt{HM-IRLS} is the first algorithm for low-rank matrix recovery which achieves superlinear rate of convergence for low complexity measurements as well as for larger problems.

Additionally, we conduct extensive numerical experiments comparing the efficiency of \texttt{HM-IRLS} with previous IRLS algorithms as \texttt{IRLS-col}, Riemannian optimization techniques \citep{Vandereycken13}, alternating minimization approaches \citep{HaldarH09,TannerWei16}, algorithms based on iterative hard thresholding \citep{KyrillidisC14,BlanchardTW15}, and others \citep{Park16}, in terms of sample complexity, again for the important case of \emph{matrix completion}. 
	
The experiments lead to the following observation: \texttt{HM-IRLS} recovers low-rank matrices systematically with an optimal number of measurements that is very close to the theoretical lower bound on the number of measurements that is necessary for recovery with high empirical probability. 
We consider this result to be remarkable, as it means that for problems of moderate dimensionality (matrices of $\approx 10^7$ variables, e.g. $(d_1 \times d_2)$-matrices with $d_1 \approx d_2 \approx 5\cdot10^3$) \emph{the proposed algorithm needs fewer measurements for the recovery of a low rank matrix than all the state-of-the-art algorithms we included in our experiments} (see Figure \ref{avg_plot}).

An important practical observation of \texttt{HM-IRLS} is that its performance is very robust to the choice of the initialization and can be used as a stand-alone algorithm to recover low-rank matrices also
starting from a trivial initialization. This is suggested by our numerical experiments since even for random or adversary initializations, \texttt{HM-IRLS} converges to the low-rank matrix, even though it is based on an objective function which is highly non-convex. 
While a complete theoretical understanding of this behavior is not yet achieved, we regard the empirical evidence in a variety of interesting cases as strong. In this context, we consider a proof of the global convergence of \texttt{HM-IRLS} for non-convex penalizations under appropriate assumptions as an interesting open problem.

\subsection{Outline}
We proceed in the paper as follows. In the next section, we provide some background on Kronecker and Hadamard products of matrices as these concepts are used in the analysis of the algorithm to be discussed. Moreover, we explain different reformulations of the Schatten-$p$ quasi-norm in terms of weighted $\ell_2$-norms, which lead to the derivation of the harmonic mean iteratively reweighted least squares (\texttt{HM-IRLS}) algorithm in \Cref{sec_HM_IRLS}. We present our main theoretical results, the convergence guarantees and the locally superlinear convergence rate for the algorithm in \cref{sec_maintheoryresults}. Numerical experiments and comparisons to state-of-the-art methods for low-rank matrix recovery are carried out in \Cref{sec_num}.
In \Cref{sec_theoretical_analysis}, we interpret the algorithm's different steps as minimizations of an auxililary functional with respect to its arguments and show theoretical guarantees for \texttt{HM-IRLS} extending similar guarantees for \texttt{IRLS-col}. After this, we detail the proof of the locally superlinear convergence rate under appropriate assumptions on the null space of the measurement operator.

\section{Notation and background}
\subsection{General notation, Schatten-$p$ and weighted norms}
In this section, we explain some of the notation we use in the course of this paper.

The set of matrices $X\in \C^{d_1\times d_2}$ is denoted by $M_{d_1 \times d_2}$. Unless stated otherwise, vectors $x \in \mathbb{C}^d$ are considered as column vectors. We also use the vectorized form $X_{\vecc}= \left[
X_1^T,\dots,X_j^T,\dots,X_{d_2}^T\right]^T \in \C^{d_1 d_2}$ of a matrix $X \in M_{d_1\times d_2}$ with columns $X_j$, $j \in \{1,\ldots,d_2\}$. The reverse recast of a vector $x \in \mathbb{C}^{d_1 d_2}$ into a matrix of dimension $d_1 \times d_2$ is denoted by $x_{\mat(d_1,d_2)}= \left[X_1,\dots,X_j,\dots,X_{d_2} \right]$, where  $X_j=[x_{(d_1-1)\cdot j+1},\dots,x_{(d_1-1)\cdot j+d_1}]^T$, $j=1,\dots,d_2$ are column vectors, 
or $X_{\mat}$ if the dimensions are clear from the context. Obviously, it holds that $X=(X_{\vecc})_{\mat}$.

The identity matrix in dimension $d\times d$ is denoted by $\mathbf{I}_d$. With $\mathbf{0}_{d_1\times d_2}\in M_{d_1\times d_2}$ and $\mathbf{1}_{d_1\times d_2}\in M_{d_1\times d_2}$ we denote the matrices with only $0$- or $1$-entries respectively. 
 The set of Hermitian matrices is denoted by $H_{d \times d} := \{X \in M_{d \times d} \mid X = X^*\}$. We write $X^+ \in  M_{d_1 \times d_2}$ for the Moore-Penrose inverse of the matrix $X \in M_{d_1 \times d_2}$.


Let $\mathcal{U}_{d} = \{U \in \C^{d \times d} ; UU^* = \mathbf{I}_d\}$ denote the set of unitary matrices. Then the singular value decomposition of a matrix $X \in M_{d_1 \times d_2}$ can be written as $X= U\Sigma V^*$ with $U \in \mathcal{U}_{d_1}$, $V \in \mathcal{U}_{d_2}$ and $\Sigma \in M_{d_1 \times d_2}$, where $\Sigma$ is diagonal and contains the singular values of $X$ such that 
$\Sigma_{ii} = \sigma_i(X)  \geq 0$ for $i \in \{1,\dots, \min(d_1,d_2)\}$. We define the \emph{Schatten-$p$ (quasi-)norm} of $X\in M_{d_1 \times d_2}$ as  
\begin{equation}\label{def_schattenp} \|X\|_{S_p} := 
\begin{cases} \rank(X),  &\quad \text{ for } p=0,\\
\left[ \sum_{j=1}^{\min(d_1,d_2)} \sigma^p_j (X)\right]^{1/p}, &\quad \text{ for } 0 < p < \infty, \\
\sigma_{\max} (X), &\quad \text { for } p = \infty.	\end{cases}
\end{equation}
Note that for $p=1$, the Schatten-$p$ norm is also called \emph{nuclear norm}, written as $\|X\|_* := \|X\|_{S_1}$. 
The \emph{trace} $\trace[X]$ of a matrix $X \in M_{d_1 \times d_2}$ is defined by the sum of its diagonal elements, $\trace[X]=\sum_{j=1}^{\min(d_1,d_2)} X_{jj}$. It can be seen that the $p$-th power of the Schatten-$p$ norm coincides with $\|X\|^p_{S_p}=\trace\left[(X^*X)^{p/2}\right]$. The Schatten-$2$ norm is also called \emph{Frobenius norm} and has the property that it is induced by the Frobenius scalar product $\langle X,Y\rangle_F = \trace\left[X^* Y\right]$, i.e., $\|X\|_F = \|X\|_{S_2} = \sqrt{\langle X, X\rangle_F}$.
We define the \emph{weighted Frobenius scalar product} of two matrices $X,Y \in M_{d_1\times d_2}$ weighted by the the positive definite weight matrix $W \in H_{d_1 \times d_1}$ as $\langle X,Y \rangle_{F(W)} := \langle W X,Y \rangle_{F}= \langle X,W Y \rangle_{F}$. This scalar product induces the \emph{weighted Frobenius norm} $\|X\|_{F(W)} = \sqrt{\langle X,X\rangle_{F(W)}} = \sqrt{\trace[(WX)^*X]}$. 
It is clear that the Frobenius norm of a matrix $X$ coincides with the $\ell_2$-norm of its vectorization $X_{\vecc}$, i.e., $\|X\|_F=\|X_{\vecc}\|_{\ell_2}$.

Similar to weighted Frobenius norms, we define the \emph{weighted $\ell_2$-scalar product} of vectors $x,y \in \mathbb{C}^{d}$ weighted by the positive definite weight matrix $W \in H_{d \times d}$ as $\langle x,y \rangle_{\ell_2(W)} = x^*Wy = \overline{y^* W x}$ and its induced \emph{weighted $\ell_2$-norm} as $\|x\|_{\ell_2(W)} = \sqrt{x^*Wx}$. 
We use the notation $X \succ 0$ for a positive definite matrix $X \in H_{d \times d}$. Furthermore, we denote the range of a linear map $\Phi:M_{d_1\times d_2}\rightarrow \C^m$ by $\Ran(\Phi)=\big\{Y \in \C^m; \text{ there is }X \in M_{d_1 \times d_2} \text{ such that }Y =\Phi(X)\big\}$ and its null space by $\mathcal{N}(\Phi)=\big\{X \in M_{d_1 \times d_2} ; \Phi(X) = 0\big\}$. 

\subsection{Problem setting and characterization of \texorpdfstring{$S_p$}{Sp}- and reweighted Frobenius norm minimizers} \label{sec_weightedFrob}

Given a linear map $\Phi: M_{d_1\times d_2} \rightarrow \C^m$ such that $m \ll d_1 d_2$, we want to uniquely identify and reconstruct an unknown matrix $X_0$ from its linear image $Y:=\Phi(X_0) \in \C^m$. However, basic linear algebra tells us that this is not possible without further assumptions, since $\Phi$ is not injective if $m < d_1 d_2$. Indeed, there is a $(d_1 d_2 - m)$-dimensional affine space $\{X_0\} + \mathcal{N}(\Phi)$ fulfilling the linear constraint  
\begin{equation*}
\Phi(X)=Y.
\end{equation*}
Nevertheless, under the additional assumption that the matrix $X_0 \in M_{d_1 \times d_2}$ has rank $r < \min(d_1,d_2)$ and under appropriate assumptions on the map $\Phi$, the recovery of $X_0$ is possible by solving the affine rank minimization problem
\begin{equation}\label{model0}
\min \rank(X) \text{ subject to } \Phi(X)=Y.
\end{equation}
The unique solvability of \cref{model0} is given with high probability if, for example, $\Phi$ is a linear map whose matrix representation has i.i.d. Gaussian entries \citep{ENP12} and $m = \Omega(r(d_1+d_2))$. Unfortunately, solving \cref{model0} is intractable in general, but the works \citep{CR09,Recht10,Candes10} suggest solving the tractable convex optimization program
\begin{equation}\label{model1}
\min \|X\|_{S_{1}} \text{ subject to } \Phi(X)=Y,
\end{equation}
also called \emph{nuclear norm minimization (\texttt{NNM})}, as a proxy. 
%

%

As discussed in the introduction, there are empirical as well as theoretical results (e.g., in \citep{Daubechies10,Chartrand07}) coming from the related \emph{sparse vector recovery} problem that suggest alternative relaxation approaches. These results indicate that it might be even more advantageous to solve the non-convex problem 
\begin{equation}\label{modelp}
\min F^p(X):= \|X\|^p_{S_{p}} \text{ subject to } \Phi(X)=Y,
\end{equation}
for $0<p<1$, i.e., minimizing the $p$-th power of the Schatten-$p$ quasi-norms under the affine constraint. Heuristically, the choice of $p < 1$ relatively small can be motivated by the observation that by the definition \cref{def_schattenp} of the Schatten-$p$ quasi-norm
\[ 
\|X\|_{S_p}^p \xrightarrow{p \to 0} \rank(X)=:\|X\|_{S_0}.
\]
The above consideration suggests that the solution of \cref{modelp} might be closer to \cref{model0} than \cref{model1} for small $p$. On the other hand, again, it is in general computationally intractable to find a global minimum of the non-convex optimization problem \cref{modelp} if $p <1$. 
Therefore it is a natural and very relevant question to ask which optimization algorithm to use to find global minimizers of \cref{modelp}. 

In this paper, we discuss an algorithm striving to solve \cref{modelp} that is based on the following observations: Assume for the moment that we are given a square matrix $X \in M_{d_1 \times d_2}$ with $d_1 = d_2$ of full rank. Then, we can rewrite the $p$-th power of its Schatten-$p$ quasi-norm as a square of a weighted Frobenius norm, or, using Kronecker product notiation as explained in \cref{sec_kroneckerhadamard}, as a square of a weighted $\ell_2$-norm (if we use the vectorized notation $X_{\vecc}$): Iit turns out that
\begin{enumerate}
\item[(i)] $ \begin{aligned}[t] \|X\|^p_{{S_p}}&=\trace[(XX^*)^{\frac{p}{2}}]=\trace[(XX^*)^{\frac{p-2}{2}}(XX^*)]=\trace({W}_LXX^*)=\|{W}_L^{\frac{1}{2}}X\|^2_{F}\\&=\|X\|^2_{F(W_L)}=\|(\mathbf{I}_{d_2}\otimes{W}_L)^{\frac{1}{2}}X_{\vecc}\|^2_{\ell_2}=\|X_{\vecc}\|^2_{\ell_2(\mathbf{I}_{d_2}\otimes{W}_L)},\end{aligned}$ \\
where ${W}_L$ is the symmetric weight matrix $(XX^*)^{\frac{p-2}{2}}$ in $M_{d_1\times d_1}$ and $\mathbf{I}_{d_2}\otimes{W}_L$ is the block diagonal weight matrix in $M_{d_1 d_2 \times d_1 d_2}$ with $d_2$ instances of ${W}_L$ on the diagonal blocks, but also that
\item[(ii)]$ \begin{aligned}[t] \|X\|^p_{{S_p}}&=\trace[(X^*X)^{\frac{p}{2}}]=\trace[(X^*X)(X^*X)^{\frac{p-2}{2}}]=\trace(X^*X{W}_R)=\|X{W}_R^{\frac{1}{2}}\|^2_{F}\\&=\|X^*\|^2_{F(W_R)}=\|({W}_R\otimes\mathbf{I}_{d_1})^{\frac{1}{2}}X_{\vecc}\|^2_{\ell_2}=\|X_{\vecc}\|^2_{\ell_2(W_R\otimes\mathbf{I}_{d_1})},\end{aligned}$ \\
where $W_R$ is the symmetric weight matrix $(X^*X)^{\frac{p-2}{2}}$ in $M_{d_2\times d_2}$. It follows from the definition of the Kronecker product that the weight matrix ${W}_R \otimes \mathbf{I}_{d_1} \in M_{d_1 d_2 \times d_1 d_2}$ is a block matrix of diagonal blocks of the type $\diag\big((W_R)_{ij},\ldots,(W_R)_{ij} \big)\in M_{d_1\times d_1}$, $i,j \in [d_2]$. 
\vspace*{-6mm} 
\end{enumerate}
 \begin{figure}[H]
 \makebox [1\textwidth][c]{
    \subfloat[$\mathbf{I}_{d_2}\otimes{W}_L$]{
        \includegraphics[width=0.3\textwidth]{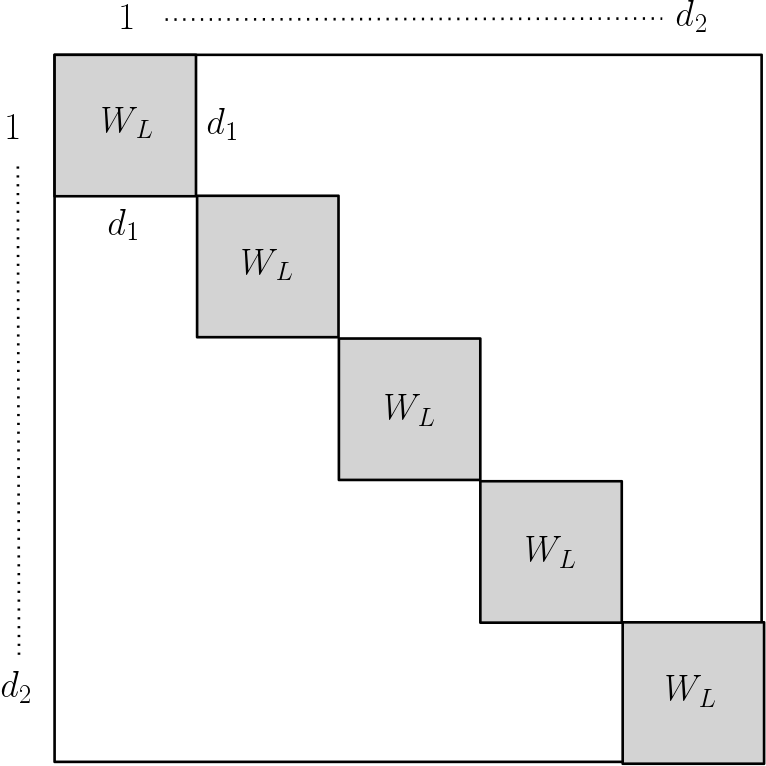}}
 \subfloat[${W}_R \otimes \mathbf{I}_{d_1}$]{
        \includegraphics[width=0.3\textwidth]{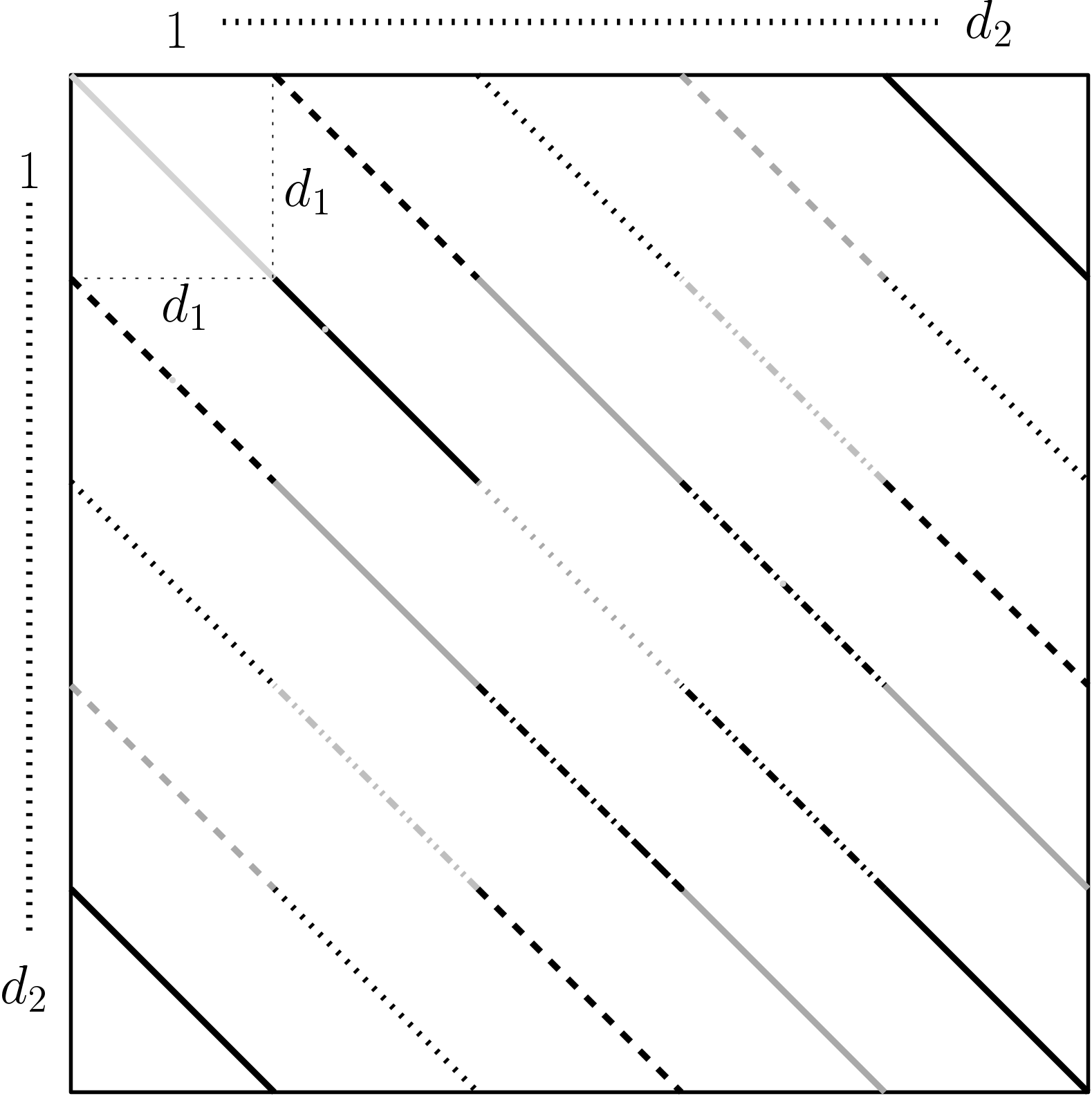}
    }}
\caption{Sparsity structure of the weight matrices $\in M_{d_1d_2 \times d_1 d_2}$} \label{fig_structure_weight}
\end{figure}
 The sparsity structures of $\mathbf{I}_{d_2}\otimes{W}_L$ and ${W}_R \otimes \mathbf{I}_{d_1}$ are illustrated in \cref{fig_structure_weight}. Note that a representation of $\|X\|_{S_p}^p$ by 
squares of Frobenius norms can be achieved by multiplying $X$ by $W_L^{\frac{1}{2}}$ from the left in (i), or by $W_R^{\frac{1}{2}}$ from the right in (ii).

The above calculations are not well-defined if $X$ is not of full rank or if $d_1 \neq d_2$, since in these cases at least one of the matrices $X X^* \in M_{d_1 \times d_1}$ or $X^*X \in M_{d_2 \times d_2}$ is singular, prohibiting the definition of the matrices $W_R = (X^*X)^{\frac{p-2}{2}}$ or $W_L = (XX^*)^{\frac{p-2}{2}}$ for $p < 2$. However, these issues can be overcome by introducing a smoothing parameter $\epsilon > 0$ and smoothed weight matrices ${W}_{L}(X,\epsilon) \in M_{d_1 \times d_1}$ and ${W}_{R}(X,\epsilon) \in M_{d_2 \times d_2}$ defined by
\begin{align}
{W}_{L}(X,\epsilon)&:=(XX^*+\epsilon^2\mathbf{I}_{d_1})^{\frac{p-2}{2}}, \label{W_L_eps}\\
{W}_{R}(X,\epsilon)&:=(X^*X+\epsilon^2\mathbf{I}_{d_2})^{\frac{p-2}{2}}. \label{W_R_eps}
\end{align}
\begin{remark}
 The weight matrices ${W}_{L}(X,\epsilon)$ and ${W}_{R}(X,\epsilon)$ are symmetric and positive definite. 
\end{remark}
The possibility to rewrite the $p$-th power of the Schatten-$p$ of a matrix as a weighted Frobenius norm gives rise to the general strategy of IRLS algorithms for low-rank matrix recovery: Weighted least squares problems of the type
\[
\min_{\substack{X \in M_{d_1 \times d_2} \\ \Phi(X)=Y}} \|X\|^2_{F(W_L)} \quad \text{ or } \min_{\substack{X \in M_{d_1 \times d_2} \\ \Phi(X)=Y}} \|X^*\|^2_{F(W_R)}
\]
are solved and weight matrices $W_L$ are updated alternatingly, leading to the algorithms column-reweighting \texttt{IRLS-col} and row-reweighting \texttt{IRLS-row}, respectively \citep{Mohan10,Fornasier11}.
%
%

\subsection{Averaging of weight matrices}\label{sec_avg_weightmatrices}	

While the algorithms \texttt{IRLS-col} and \texttt{IRLS-row} provide a tractable local minimization strategy of smoothed Schatten-$p$ functionals under the linear constraint, we argue that it is suboptimal to follow either one of the two approaches as they do not exploit the symmetry of the problem in an optimal way: They \emph{either} use low-rank information in the column space \emph{or} in the row space.

A first intuitive approach towards a symmetric exploitation of the low-rank structure is inspired by the following identity, by combing the calculations (i) and (ii) carried out in \cref{sec_weightedFrob}.
\begin{lemma}\label{arith}  Let $0 < p \leq 2$ and $X \in M_{d_1 \times d_2}$ with $d=d_1=d_2$ be a full rank matrix. Then
\begin{equation*}
\begin{aligned}
\|X\|^p_{S_p}&=\frac{1}{2}\left(\|{W}_L^{\frac{1}{2}}X\|^2_{F}+\|X{W}_R^{\frac{1}{2}}\|^2_{F}\right)
= \left\|\left(\frac{{W}_L \oplus {W}_R}{2}\right)^{\frac{1}{2}}X_{\vecc}\right\|^2_{\ell_2} = \|X_{\vecc}\|_{\ell_2(W_{(\text{arith})})}^2,
\end{aligned}
\end{equation*}
where $$\frac{1}{2}\left(\mathbf{I}_{d_2}\otimes {W}_L+{W}_R\otimes \mathbf{I}_{d_1}\right)=\frac{{W}_L \oplus {W}_R}{2} =: W_{(\text{arith})}$$ is the arithmetic mean matrix of the symmetric and positive definite weight matrices $\mathbf{I}_{d_2}\otimes {W}_L$ and ${W}_R\otimes \mathbf{I}_{d_1}$, $W_L := (XX^*)^{\frac{p-2}{2}}$, and $W_R := (X^*X)^{\frac{p-2}{2}}$. 
\end{lemma}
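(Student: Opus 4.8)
The plan is to obtain the entire claimed chain of identities as an immediate consequence of the two reformulations (i) and (ii) established in \cref{sec_weightedFrob}. Under the hypotheses $0 < p \le 2$, $d_1 = d_2 = d$, and $X$ of full rank, both $W_L = (XX^*)^{\frac{p-2}{2}}$ and $W_R = (X^*X)^{\frac{p-2}{2}}$ are well defined, symmetric, and positive definite, so the exponent $\frac{p-2}{2}$ causes no difficulty and there is no need for the smoothing of \cref{W_L_eps,W_R_eps}. The crucial observation is that each of (i) and (ii) already expresses $\|X\|_{S_p}^p$ \emph{exactly}; hence their arithmetic average is again exactly $\|X\|_{S_p}^p$, and the sole content of the lemma is to rewrite that average as a single weighted $\ell_2$-norm with the Kronecker-sum weight.

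First I would recall (i) and (ii) in both their Frobenius and their vectorized forms, namely $\|X\|_{S_p}^p = \|W_L^{\frac{1}{2}}X\|_F^2 = \|X_{\vecc}\|_{\ell_2(\mathbf{I}_{d_2}\otimes W_L)}^2$ and $\|X\|_{S_p}^p = \|X W_R^{\frac{1}{2}}\|_F^2 = \|X_{\vecc}\|_{\ell_2(W_R \otimes \mathbf{I}_{d_1})}^2$, and average the two. The first displayed equality, $\|X\|_{S_p}^p = \tfrac12\big(\|W_L^{\frac{1}{2}}X\|_F^2 + \|X W_R^{\frac{1}{2}}\|_F^2\big)$, is then immediate. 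The key step toward the remaining equalities is the additivity of the weighted $\ell_2$-norm in its weight: since $\|x\|_{\ell_2(W)}^2 = x^* W x$ by definition, one has $\tfrac12(x^* W_1 x + x^* W_2 x) = x^*\big(\tfrac{W_1+W_2}{2}\big)x$ for any Hermitian $W_1, W_2$, so the two averaged summands collapse into $\|X_{\vecc}\|_{\ell_2(W_{(\mathrm{arith})})}^2$ with $W_{(\mathrm{arith})} = \tfrac12\big(\mathbf{I}_{d_2}\otimes W_L + W_R\otimes \mathbf{I}_{d_1}\big)$.

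To finish I would record the bookkeeping that matches the stated notation: the matrix $\mathbf{I}_{d_2}\otimes W_L + W_R \otimes \mathbf{I}_{d_1}$ is by definition the Kronecker sum $W_L \oplus W_R$, and since it is an average of the two positive definite matrices $\mathbf{I}_{d_2}\otimes W_L$ and $W_R\otimes \mathbf{I}_{d_1}$ it is itself positive definite, so its Hermitian square root exists and the factor $\big(\tfrac{W_L \oplus W_R}{2}\big)^{\frac{1}{2}}$ in the displayed expression is legitimate. Honestly, there is no substantial obstacle here — the lemma is essentially a restatement of (i) and (ii) together with linearity of the quadratic form in the weight. The only points demanding any care are confirming that the Kronecker-sum symbol $\oplus$ is used consistently with $\mathbf{I}_{d_2}\otimes W_L + W_R\otimes \mathbf{I}_{d_1}$ and verifying positive-definiteness of $W_{(\mathrm{arith})}$ so that the square root and the induced norm are well defined; both are routine.
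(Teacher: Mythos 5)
Your proof is correct and follows exactly the route the paper intends: the paper offers no separate proof of \cref{arith}, stating only that the identity is obtained ``by combining the calculations (i) and (ii)'' of \cref{sec_weightedFrob}, which is precisely your averaging argument together with the linearity of the quadratic form $x \mapsto x^* W x$ in the weight $W$. The added bookkeeping (consistency of the Kronecker-sum notation and positive definiteness of $W_{(\text{arith})}$ so that its square root and the induced norm are well defined) is accurate and completes the argument.
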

Unfortunately, the introduction of arithmetic mean weight matrices does not prove to be particularly advantageous compared to one-sided reweighting strategies. No convincing improvements can be noted neither in numerical experiments nor in the theoretical investigations for the convergence rate of IRLS for low-rank matrix recovery, cf. also \Cref{sec_num_ratecomparisons} and \Cref{remark_Mohanlaueftnicht}.

In contrast, we want to promote the usage of the \emph{harmonic mean of the weight matrices} $\mathbf{I}_{d_2}\otimes {W}_L$ and ${W}_R\otimes \mathbf{I}_{d_1}$, i.e., weight matrices of the type $2\left({W}_R^{-1}\otimes \mathbf{I}_{d_1}+\mathbf{I}_{d_2}\otimes {W}_L^{-1}\right)^{-1}=2\left({W}_L^{-1} \oplus {W}_R^{-1}\right)^{-1} =: W_{(\text{harm})}$. In the remaining parts of the paper, we explain why $W_{(\text{harm})}$ is able to significantly outperform other weighting variants both theoretically and practically. 

 The following lemma verifies that also the harmonic mean of the weight matrices $\mathbf{I}_{d_2}\otimes {W}_L$ and ${W}_R\otimes \mathbf{I}_{d_1}$ leads to a legitimate reformulation of the Schatten-$p$ quasi-norm power.
\begin{lemma}\label{harmonic} Let $0 < p \leq 2$ and $X \in \C^{d_1 \times d_2}$ with $d=d_1=d_2$ be a full rank matrix. Then
\begin{equation*}
\begin{aligned}
\|X\|^p_{S_p}&=2 \left\|\left({W}_L^{-1} \oplus {W}_R^{-1}\right)^{-\frac{1}{2}}X_{\vecc}\right\|^2_{\ell_2} = \|X_{\vecc}\|_{\ell_2(W_{(\text{harm})})}^2,
\end{aligned}
\end{equation*}
where $$2\left({W}_R^{-1}\otimes \mathbf{I}_{d_1}+\mathbf{I}_{d_2}\otimes {W}_L^{-1}\right)^{-1}=2\left({W}_L^{-1} \oplus {W}_R^{-1}\right)^{-1} =: W_{(\text{harm})}$$ is the harmonic mean matrix of the symmetric and positive definite weight matrices $\mathbf{I}_{d_2}\otimes {W}_L$ and ${W}_R\otimes \mathbf{I}_{d_2}$, $W_L := (XX^*)^{\frac{p-2}{2}}$ and $W_R := (X^*X)^{\frac{p-2}{2}}$. 
\end{lemma}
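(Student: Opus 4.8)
The plan is to diagonalize all the weight matrices simultaneously through the singular value decomposition of $X$ and thereby reduce the statement to the arithmetic-mean identity of \cref{arith}. Writing $X = U\Sigma V^*$ with unitary $U=[u_1,\dots,u_d]$, $V=[v_1,\dots,v_d]$ and $\Sigma=\diag(\sigma_1,\dots,\sigma_d)$, $\sigma_i>0$, the functional calculus gives $W_L = (XX^*)^{\frac{p-2}{2}}= U\Sigma^{p-2}U^*$ and $W_R = (X^*X)^{\frac{p-2}{2}}= V\Sigma^{p-2}V^*$. The key structural observation I would record first is that the two matrices $\mathbf{I}_d\otimes W_L$ and $W_R\otimes\mathbf{I}_d$ commute, so that they admit a common orthonormal eigenbasis; concretely $w_{ij}:=v_j\otimes u_i$ satisfies $(\mathbf{I}_d\otimes W_L)\,w_{ij}=\sigma_i^{p-2}\,w_{ij}$ and $(W_R\otimes\mathbf{I}_d)\,w_{ij}=\sigma_j^{p-2}\,w_{ij}$.

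In the second step I would read off the spectrum of the harmonic mean in the same basis, namely $W_{(\text{harm})}\,w_{ij} = 2\bigl(\sigma_i^{2-p}+\sigma_j^{2-p}\bigr)^{-1} w_{ij}$. The decisive point is that on the diagonal directions $w_{ii}$ the two constituents carry the \emph{same} eigenvalue $\sigma_i^{p-2}$, so any symmetric average of them, harmonic as well as arithmetic, acts there by the single scalar $\sigma_i^{p-2}$. It therefore suffices to show that $X_{\vecc}$ lies in $\spann\{w_{ii}\}_{i=1}^d$. Reading the coefficients $\langle w_{ij},X_{\vecc}\rangle$ off the SVD orthogonality relations $u_i^* X v_j=\sigma_i\delta_{ij}$, the off-diagonal coefficients vanish and $X_{\vecc}=\sum_i\sigma_i\,w_{ii}$. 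Inserting this into the quadratic form collapses the double sum to its diagonal,
\[
\|X_{\vecc}\|_{\ell_2(W_{(\text{harm})})}^2=\sum_{i=1}^d\sigma_i^2\cdot\frac{2}{2\sigma_i^{2-p}}=\sum_{i=1}^d\sigma_i^{p}=\|X\|_{S_p}^p,
\]
which is the assertion; equivalently, since $W_{(\text{harm})}$ and $W_{(\text{arith})}$ then agree on $X_{\vecc}$, the claim follows directly from \cref{arith}.

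The hard part will be the diagonal reduction itself, that is, the vanishing of the cross coefficients $\langle w_{ij},X_{\vecc}\rangle$ for $i\neq j$: it is precisely this that renders the distinction between harmonic and arithmetic averaging irrelevant for the \emph{value} $\|X\|_{S_p}^p$ (even though, as the algorithmic analysis later shows, it is this very distinction that is decisive for the performance of the method). Here the conjugation in the vectorization identity $\vecc(AYB)=(B^T\otimes A)X_{\vecc}$ must be tracked carefully, since what makes the coefficients collapse is the correct alignment of the factor $\bar v_j$ produced by vectorizing $X=\sum_i\sigma_i u_i v_i^*$ with the eigenvectors $v_j$ of $W_R\otimes\mathbf{I}_d$; once this bookkeeping is settled, the remaining computation is only the elementary cancellation $\sigma_i^2\cdot\sigma_i^{p-2}=\sigma_i^p$ already used in \cref{arith}.
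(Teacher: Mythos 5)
Your proof is correct and is essentially the paper's own argument: both diagonalize $W_{(\text{harm})}$ in the Kronecker eigenbasis $v_j\otimes u_i$ furnished by the SVD (the paper obtains this spectral decomposition by invoking the Kronecker-sum inversion formula, \cref{lemma_jameson}), note that $X_{\vecc}$ is supported on the diagonal directions $w_{ii}$, and collapse the quadratic form to $\sum_{i}\sigma_i^{p}$. Your organization via commutativity and the reduction to \cref{arith} is a tidier write-up of the same computation, and your remark about tracking the conjugate $\bar v_j$ in the vectorization identity is well taken --- the paper's own proof glosses over exactly this point when it writes $X_{\vecc}=(V\otimes U)\Sigma_{\vecc}$.
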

\begin{proof}
Let $X= U\Sigma V^*  = \sum_{i=1}^d \sigma_i u_iv_i^* \in M_{d\times d}$ be the singular value decomposition of $X$. Therefore for the vectorized version, $X_{\vecc}=(V\otimes U)\Sigma_{\vecc}$ holds true. By the definitions of $W_L$ and $W_R$, we can write $W_L^{-1} =\sum_{i=1}^d \sigma_i^{2-p} u_iu_i^*$ and $W_R^{-1} =\sum_{i=1}^d \sigma_i^{2-p} v_iv_i^*$. Using the Kronecker sum inversion formula of \Cref{lemma_jameson} in \cref{sec_kroneckerhadamard}, we obtain
\begin{equation*}
\begin{split}
\|X_{\vecc}\|_{\ell_2(W_{(\text{harm})})}^2  &= \|W_{(\text{harm})}^{\frac{1}{2}} X_{\vecc}\|_{\ell_2}^2 = 2 \left\|\left({W}_L^{-1} \oplus {W}_R^{-1}\right)^{-\frac{1}{2}}X_{\vecc}\right\|^2_{\ell_2} \\
&= 2\trace\left(\left(\left({W}_L^{-1} \oplus {W}_R^{-1}\right)^{-1}  X_{\vecc}\right)^*_{\mat}X\right)\\
&= \sum_{i=1}^{d} \sum_{j=1}^{d}  \sum_{k=1}^{d} \frac{2 \sigma_k}{\sigma_i^{2-p} +\sigma_j^{2-p}}v_j v_i^* v_k u_k^* u_i u_i^*  \sum_{l=1}^{d} \sigma_l u_lv_l^*\\
&=2\left(\sum_{i=1}^{d}\frac{\sigma_i^2}{2\sigma_i^{2-p}} \right)=\|X\|^p_{S_p},
\end{split}
\end{equation*}
which finishes the proof.
\end{proof}

\section{Harmonic mean iteratively reweighted least squares algorithm} \label{sec_HM_IRLS}
In this section, we use this idea to formulate a new iteratively reweighted least squares algorithm for low-rank matrix recovery. The so-called \emph{harmonic mean iteratively reweighted least squares algorithm} (\texttt{HM-IRLS}) solves a sequence of weighted least squares problems to recover a low-rank matrix $X_0 \in M_{d_1 \times d_2}$ from few linear measurements $\Phi(X_0) \in \C^m$. The weight matrices appearing in the least squares problems can be seen as the harmonic mean of the weight matrices 
in \eqref{W_L_eps} and \eqref{W_R_eps}, i.e., the ones used by \texttt{IRLS-col} and \texttt{IRLS-row}.

More precisely, for $0 < p \leq 1$ and $d=\min(d_1,d_2), D=\max(d_1,d_2)$, given a non-increasing sequence of non-negative real numbers $(\epsilon^{(n)})_{n=1}^\infty$ and the sequence of iterates $(X^{(n)})_{n=1}^\infty$ produced by the algorithm, we update our weight matrices such that
\begin{equation} \label{Wtilde_firstdef}
\widetilde{W}^{(n)} = 2 \left[U^{(n)}(\overline{\Sigma}_{d_1}^{(n)})^{2-p}U^{(n)*} \oplus V^{(n)}(\overline{\Sigma}_{d_2}^{(n)})^{2-p}V^{(n)*}\right]^{-1},
\end{equation}
with the diagonal matrices $\overline{\Sigma}_{d_t}^{(n)}\in M_{d_t \times d_t}$ for $d_t=\left\{d_1,d_2 \right\}$ such that 
\begin{equation}\label{sigmabar}
(\overline{\Sigma}^{(n)}_{d_t})_{ii} = \begin{cases}(\sigma_i(X^{(n)})^2 + \epsilon^{(n)2})^{\frac{1}{2}}
& \text{ if } i \leq d,\\
0 
& \text{ if }  d < i \leq D,
\end{cases}
\end{equation}
and the matrices $U^{(n)} \in \mathcal{U}_{d_1}$ and $V^{(n)} \in \mathcal{U}_{d_2}$, containing the left and right singular vectors of $X^{(n)}$ in its columns, respectively.

We note that this definition of $\widetilde{W}^{(n)}$ can be seen as a stabilized version of the harmonic mean weight matrix $W_{(\text{harm})}$ of \Cref{harmonic}. This stabilization is necessary as $\widetilde{W}^{(n)}$ becomes very ill-conditioned as soon as some of the singular values of $X^{(n)}$ approach zero and, related to that, $(X^{(n)}X^{(n)*})^{\frac{2-p}{2}} \oplus (X^{(n)*}X^{(n)})^{\frac{2-p}{2}}$ would even be singular as soon as $X^{(n)}$ is not of full rank.

Additionally, for the formualtion of the algorithm and any $n \in \N$, it is convenient to define the linear operator $(\widetilde{\mathcal{W}}^{(n)})^{-1}:M_{d_1 \times d_2} \rightarrow M_{d_1 \times d_2}$ as
\begin{equation} \label{left_mult_W}
(\widetilde{\mathcal{W}}^{(n)})^{-1}(X) :=  \frac{1}{2}\left[U^{(n)}(\overline{\Sigma}_{d_1}^{(n)})^{2-p}U^{(n)*} X + X V^{(n)}(\overline{\Sigma}_{d_2}^{(n)})^{2-p}V^{(n)*}\right],
\end{equation}
describing the operation of the inverse of $\widetilde{W}^{(n)}$ on $M_{d_1 \times d_2}$.

Finally, \texttt{HM-IRLS} can be formulated in pseudo code as follows.
\begin{algorithm}[H]
\caption{Harmonic Mean IRLS for low-rank matrix recovery (\texttt{HM-IRLS})} \label{algo1}
\DontPrintSemicolon 
\KwIn{A linear map $\Phi: M_{d_1\times d_2} \rightarrow \C^m$, image $Y=\Phi(X_0)$ of the ground truth matrix $X_0 \in M_{d_1 \times d_2}$, rank estimate $\widetilde{r}$, non-convexity parameter $0 < p \leq 1$.}
\KwOut{Sequence $(X^{(n)})_{n=1}^{n_0} \subset M_{d_1 \times d_2}$.}
Initialize $n=0$, $\epsilon^{(0)}=1$ and $\widetilde{W}^{(0)} = \mathbf{I}_{d_1d_2} \in M_{d_1 d_2 \times d_1 d_2}$. \;

\Repeat{stopping criterion is met.}{
\begin{align} \label{xmin}
&X^{(n+1)} =\argmin\limits_{\Phi(X)=Y} \|X_{\vecc}\|^2_{\ell_2(\widetilde{W}^{(n)})}\!\!= (\widetilde{\mathcal{W}}^{(n)})^{-1}\big(\Phi^*\big((\Phi \circ (\widetilde{\mathcal{W}}^{(n)})^{-1}\circ \Phi^*)^{-1}(Y)\big)\big),  \\
&\epsilon^{(n+1)}=\min\left(\epsilon^{(n)},\sigma_{\widetilde{r}+1}(X^{(n+1)})\right), \label{epsmin}\\
&\widetilde{W}^{(n+1)} = 2 \left[U^{(n+1)}(\overline{\Sigma}_{d_1}^{(n+1)})^{2-p}U^{(n+1)*} \oplus V^{(n+1)}(\overline{\Sigma}_{d_2}^{(n+1)})^{2-p}V^{(n+1)*}\right]^{-1}, \label{def_Wn_tilde} 
\end{align}
where $U^{(n+1)} \in \mathcal{U}_{d_1}$ and $V^{(n+1)} \in \mathcal{U}_{d_2}$ are matrices containing the left and right singular vectors of $X^{(n+1)}$ in its columns, and the $\overline{\Sigma}_{d_t}^{(n+1)}$ are defined for $t \in \{1,2\}$ according to \cref{sigmabar}. \;
\[
n = n+1, \quad\quad\quad \quad\quad\quad \quad\quad\quad \quad\quad\quad \quad\quad\quad \quad\quad\quad \quad\quad\quad \quad\quad\quad\quad
\]}
Set $n_0 = n$.
\end{algorithm}
From a practical point of view, it is beneficial that the explicit calculation of the very large weight matrices $\widetilde{W}^{(n+1)} \in H_{d_1 d_2 \times d_1 d_2}$ from \eqref{def_Wn_tilde} is not necessary in implementations of \Cref{algo1}. 
As suggested by formulas \eqref{left_mult_W} and \eqref{xmin}, it can be seen that just the operation of \emph{its inverse $(\widetilde{W}^{(n+1)})^{-1}$ resp. $(\widetilde{W}^{(n)})^{-1}$} is needed, which can be implemented by matrix-matrix multiplications on the space $M_{d_1 \times d_2}$: For matrices $X,\widetilde{X} \in M_{d_1 \times d_2}$, we have that $\widetilde{W}^{(n)} X_{\vecc} = \widetilde{X}_{\vecc}$ if and only if $X_{\vecc} = (\widetilde{W}^{(n)})^{-1}\widetilde{X}_{\vecc}$, which can be written in matrix variables as
\begin{equation*}
X =  \frac{1}{2}\left[U^{(n)}(\overline{\Sigma}_{d_1}^{(n)})^{2-p}U^{(n)*} \widetilde{X} + \widetilde{X} V^{(n)}(\overline{\Sigma}_{d_2}^{(n)})^{2-p}V^{(n)*}\right].
\end{equation*} 
The last equivalence is due to the definitions of $\widetilde{W}^{(n)}$ and the Kronecker sum, cf. \eqref{def_Wn_tilde} and \Cref{sec_kroneckerhadamard}.

Note that the smoothing parameters $\epsilon^{(n)}$ are chosen in dependence on a rank estimate $\tilde{r}$ here, which will be an important ingredient for the theoretical analysis of the algorithm. In practice, however, other choices of non-increasing sequences of non-negative real numbers $(\epsilon^{(n)})_{n=1}^\infty$ are possible and can as well lead to (a maybe even faster) convergence when tuned appropriately.

We refer to \Cref{section_computationalcomplexity} for a further discussion of implementation details.

\paragraph{Example}
With a simple example, we illustrate the versatility of \texttt{HM-IRLS}: Let $d_1 = d_2 =4$, and assume that we want to reconstruct the rank-$1$ matrix 
\[
X_0 = u v^* = \begin{pmatrix} 1 \\  10 \\ -2 \\ 0.1 \end{pmatrix} \begin{pmatrix} 1 & 2 & 3 & 4 \end{pmatrix} = 
\begin{pmatrix}           
 1   &        2    &    3     &     4 \\
           10   &     20   &      30  &  40 \\
           -2     &    -4      &    -6      &   -8 \\
          0.1   &    0.2    &   0.3   &     0.4 \end{pmatrix}
\]
from $m = d_f = r (d_1 +d_2 -r) = 7$ sampled entries $\Phi(X_0)$, where $\Phi$ is the linear map $\Phi: M_{4 \times 4}  \rightarrow \C^7$, $\Phi(X)= \begin{pmatrix}X_{2,1}, & X_{4,1}, & X_{3,2}, & X_{4,2}, & X_{4,3}, & X_{1,4}, & X_{2,4} \end{pmatrix}$. Since the linear map $\Phi$ samples some entries of matrices in $M_{4 \times 4}$ and does not see the others, this is an instance of the problem that is called \emph{matrix completion}.

In general, reconstructing a $(d_1 \times d_2)$ rank-$r$ matrix from $m = r (d_1 +d_2 -r)$ entries is a hard problem, as it is known that if $m < r (d_1 +d_2 -r)$, there is always more than one matrix $X$ such that $\Phi(X)=\Phi(X_0)$, and even for equality, the property that $\Phi$ is invertible on (most) rank-$r$ matrices might be hard to verify \citep{Kiraly15}. 

It can be argued that the specific matrix completion problem we consider is in some sense a hard one, since, e.g., the deterministic sufficient condition for unique completability of \citep[Theorem 2]{Pimentel15} is not fulfilled (less then $2$ observed entries in the third column), and since the classical coherence parameters $\mu(u) = d_1 \max\limits_{1\leq i \leq 4} \frac{\| u u^* e_i\|_2^2}{\|u\|_2^4} \approx 3.81$ and $\mu(v) = d_2 \max\limits_{1\leq i \leq 4} \frac{\| v v^* e_i\|_2^2}{\|v\|_2^4} \approx 2.13$ that are used to analyze the behavior of many matrix completion algorithms \citep{CR09,Jain13} are quite large, with $\mu(u)$ being quite close to the maximal value of $4$.

On the other hand, as the problem is small and $X_0$ has rank $r=1$, it is possible to impute the missing values of
\[
\begin{pmatrix}
 *   &        *    &       *  &   4 \\
           10   &    *   &      * &  40 \\
          *     &    -4      &    *      &   * \\
          0.1   &    0.2    &   0.3   &    *
\end{pmatrix}
\]
by solving very simple linear equations, since, for example, $X_{4,4}= u_4 v_4$, $X_{2,1}=u_2 v_1$, $X_{2,4}=u_2 v_4$, and $X_{4,1}=u_4 v_1$, and therefore $X_{4,4} = \frac{X_{4,1} X_{2,4}}{X_{2,1}} = 0.4$. This shows that the only rank-$1$ matrix compatible with $\Phi(X_0)$ is $X_0$.

It turns out that---without using the combinatorial simplicity of the problem---the classical \texttt{NNM} does not solve the problem, as the nuclear norm minimizer (solution of \eqref{model1} for $Y=\Phi(X_0)$) produced by the semidefinite program of the convex optimization package \texttt{CVX} \citep{cvx} converges to
\[
\overline{X}_{\text{nuclear}} \approx         
\begin{pmatrix}
1   &  0.023  &  0.041   &       4 \\
           10   &   0.232   &   0.411       &    40 \\
    -0.056   &        -4 &   -0.200   &   -0.226 \\
          0.1    &      0.2     &     0.3    & 0.400
          \end{pmatrix},
\]
a matrix with $ 45.74 \approx \|\overline{X}_{\text{nuclear}}\|_{S_1} < \|X_0\|_{S_1} = \sigma_1(X_0)\approx 56.13$ and a relative Frobenius error of $\frac{\|\overline{X}_{\text{nuclear}} - X_0\|_F}{\|X_0\|_F} = 0.661$. 

Interestingly, \texttt{HM-IRLS} is able to solve the problem, if $p$ is chosen small enough, with very high precision already after few iterations, for example, up to a relative error of $4.18\cdot 10^{-13}$ after $24$ iterations if $p=0.1$. This is in contrast to the behavior of \texttt{IRLS-col}, \texttt{IRLS-row} and also to \texttt{AM-IRLS}, the IRLS variant that uses weight matrices derived from the \emph{arithmetic mean} the weights of \texttt{IRLS-col} and \texttt{IRLS-row}, cf. \cref{arith}. The iterates $X^{(n)}$ for iteration $n=2000$ of these algorithms exhibit relative errors of $0.240$, $ 0.489$ and $0.401$, respectively, for the choice of $p=0.1$---furthermore, there is no choice of $p$ that would lead to a convergence to $X_0$.

To understand this very different behavior, we note that the $n$-th iterate of any of the four IRLS variants can be written, using \cref{sec_kroneckerhadamard}, in a concise way as 
\begin{equation} \label{eq_IRLS_variants_opt}
X^{(n+1)} = \argmin\limits_{\Phi(X)=Y}\;\; \langle X_{\vecc}, W^{(n)} X_{\vecc} \rangle,
\end{equation}
where
\begin{equation} \label{eq_IRLS_general_QF}
\langle X_{\vecc}, W^{(n)} X_{\vecc} \rangle = \langle X, U^{(n)}\big[ H^{(n)} \circ (U^{(n)*} X V^{(n)} ) \big] V^{(n)*} \rangle_F = \sum_{i,j=1}^4 H_{ij}^{(n)}  | \langle u_i^{(n)}, X v_j^{(n)}\rangle|^{2}
\end{equation}
with $X^{(n)} = U^{(n)} \Sigma^{(n)} V^{(n)*}= \sum_{i=1}^4 \sigma_i^{(n)} u_i^{(n)} v_i^{(n)}$ being the SVD of $X^{(n)}$, and
\[
H_{ij}^{(n)} = \begin{cases}
2\big[\big((\sigma_i^{(n)})^{2}+(\epsilon^{(n)})^{2})^{\frac{2-p}{2}}+\big((\sigma_j^{(n)})^{2}+ (\epsilon^{(n)})^{2})\big)^{\frac{2-p}{2}}\big]^{-1} & \text{ for \texttt{HM-IRLS}}, \\
\big((\sigma_i^{(n)})^{2}+(\epsilon^{(n)})^{2}\big)^{\frac{p-2}{2}} & \text{ for \texttt{IRLS-col}}, \\
\big((\sigma_j^{(n)})^{2}+(\epsilon^{(n)})^{2}\big)^{\frac{p-2}{2}} & \text{ for \texttt{IRLS-row}, and} \\
0.5\cdot \big[\big((\sigma_i^{(n)})^{2}+(\epsilon^{(n)})^{2}\big)^{\frac{p-2}{2}} +   \big((\sigma_i^{(n)})^{2}+(\epsilon^{(n)})^{2}\big)^{\frac{p-2}{2}} \big]& \text{ for \texttt{AM-IRLS}}, \\
\end{cases}
\]
for $i,j \in \{1,2,3,4\}$ and $\epsilon^{(n)} = \min(\sigma_2^{(n)},\epsilon^{(n-1)})$.

The values of the matrix $H^{(1)}$ of weight coefficients after the first iteration in the above example are visualized in \Cref{fig:IRLS_H_matrices}, for each of the four IRLS versions above.
\begin{figure} \subfloat[\texttt{HM-IRLS} \hspace{17.5mm}(b) \texttt{IRLS-col} \hspace{16.5mm} (c) \texttt{IRLS-row} \hspace{16.5mm} (d) \texttt{AM-IRLS} \hspace*{8mm}]{
\includegraphics[width=1.00\textwidth]{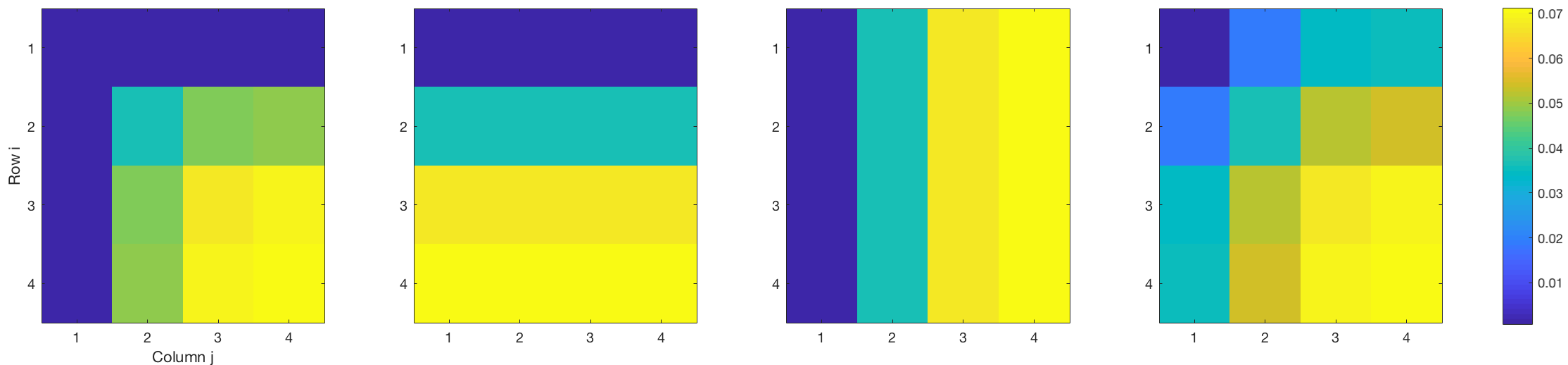}}  
\caption{Values of the matrix $H^{(1)}$ of "weight coefficients" corresponding to the orthonormal basis $(u_i^{(1)} v_j^{(1)*})_{i,j=1}^{4}$ after the first iteration in the example}\label{fig:IRLS_H_matrices}
\end{figure}

The intuition for the superior behavior of \texttt{HM-IRLS} is now the following: Since large entries of $H^{(n)}$ \emph{penalize} the corresponding parts of the space $M_{d_1 \times d_2} = \spann\{ u_i^{(n)}v_j^{(n)*}, i \in [d_1], j \in [d_2]\}$ in the minimization problem \cref{eq_IRLS_variants_opt}, large areas of \emph{blue} and \emph{dark blue} in \Cref{fig:IRLS_H_matrices} indicate a benign optimization landscape where the minimizer $X^{(n+1)}$ of \cref{eq_IRLS_variants_opt} \emph{is able to improve considerably} on the previous iterate $X^{(n)}$. 

In particular, it can be seen that in the case of \texttt{HM-IRLS}, the penalties on the \emph{whole direct sum of column and row space of the best rank-$r$ approximation of $X^{(n)}$} 
\begin{equation*}
T^{(n)}:=\big\{ \begin{pmatrix} u_1^{(n)},&\ldots,&u_r^{(n)}\end{pmatrix} Z_1^{*} +  Z_2 \begin{pmatrix} v_1^{(n)},&\ldots,&v_r^{(n)}\end{pmatrix}^{*} : Z_1 \in M_{d_1 \times r}, Z_2 \in M_{d_2 \times r} \big\},
\end{equation*}
are small compared to the other penalites, since the coefficients of $H^{(1)}$ corresponding to $T^{(1)}$ are exactly the ones in the first row and first column of the $(4 \times 4)$ matrices in \Cref{fig:IRLS_H_matrices}---a contrast that becomes more and more pronounced as $X^{(n)}$ approaches the rank-$r$ ground truth $X_0$ (with $r=1$ in the example).

On the other hand, \texttt{IRLS-col}, \texttt{IRLS-row} and \texttt{AM-IRLS} only have small coefficients on smaller parts of $T^{(n)}$, which, from a global perspective, explains why their usage might lead to non-global minima of the Schatten-$p$ objective.

We note that the space $T^{(n)}$ plays also an important role in Riemannian optimization approaches for matrix recovery problems \citep{Vandereycken13}, since it is also the tangent space of the smooth manifold of rank-$r$ matrices at the best rank-$r$ approximation of $X^{(n)}$.

\section{Convergence results} \label{sec_maintheoryresults}
In the following part, we state our main theoretical results about convergence properties of the algorithm \texttt{HM-IRLS}. Furthermore, their relation to existing results for \texttt{IRLS-col} and \texttt{IRLS-row} is discussed.

It cannot be expected that a low-rank matrix recovery algorithm like \texttt{HM-IRLS} succeeds to converge to a low-rank matrix without any assumptions on the measurement operator $\Phi$ that defines the recovery problem \eqref{model0}. For the purpose of the convergence analysis of \texttt{HM-IRLS}, we introduce the following strong \emph{Schatten-$p$ null space property} \citep{Fornasier11,Oymak11,Foucart13}.
\begin{definition}[Strong Schatten-$p$ null space property]\label{NSPs}
Let $0 < p \leq 1$. We say that a linear map $\Phi: M_{d_1\times d_2} \rightarrow \C^m$ fulfills the strong Schatten-$p$ null space property (Schatten-$p$ NSP) of order $r$ with constant $0 < \gamma_r \leq 1$ if
\begin{equation}\label{k-NSP}
\bigg(\sum_{i=1}^r \sigma_i^ 2(X)\bigg)^{p/2} < \frac{\gamma_r}{r^{1-\frac{p}{2}}} \bigg(\sum_{i=r+1}^{d} \sigma_i^p (X)\bigg)
\end{equation}
for all $X \in \mathcal{N}(\Phi)\setminus \{0\}$.
\end{definition}
Intuitively explained, if a map $\Phi$ fulfills the strong Schatten-$p$ null space property of order $r$, there are no rank-$r$ matrices in the null space and all the elements of the null space must not have a quickly decaying spectrum.

Null space properties have already been used to guarantee the success of nuclear norm minimization \eqref{model1}, or Schatten-1 minimization in our terminology, for solving the low-rank matrix recovery problem \citep{Recht11}. 

We note that the definitions of Schatten-$p$ null space properties are quite analogous to the $\ell_p$-null space property in classical compressed sensing \citep[Theorem 4.9]{Foucart13}, applied to the vector of singular values. In particular, \eqref{k-NSP} implies that 
\begin{equation} \label{eq_weakSchattenNSP}
\sum_{i=1}^r \sigma_i^p(X) < \sum_{i=r+1}^{d} \sigma_i^p(X)\quad\quad \text{ for all }X \in \mathcal{N}(\Phi)\setminus \{0\},
\end{equation}
since $\|X\|_{S_p} \leq r^{1/p-1/2} \|X\|_{S_2}$ for $X$ that is rank-$r$. This, in turn, ensures the existence of unique solutions to \eqref{modelp} if $Y=\Phi(X_0)$ are the measurements of a low-rank matrix $X_0$.
\begin{proposition}[\citep{Foucart16}] \label{thm_foucart}
Let $\Phi: M_{d_1\times d_2} \rightarrow \C^m$ be a linear map, let $0 < p \leq 1$ and $ r \in \N$. Then every matrix $X_0 \in M_{d_1\times d_2}$ such that $\rank(X_0) \leq r$ and $\Phi(X_0) = Y \in \C^m$ is the \emph{unique} solution of Schatten-$p$ minimization \eqref{modelp} if and only if $\Phi$ fulfills \eqref{eq_weakSchattenNSP}.
\end{proposition}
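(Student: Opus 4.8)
The statement is the matrix counterpart of the classical characterization of $\ell_p$-recovery of sparse vectors by the $\ell_p$ null space property (e.g.\ \citep[Theorem 4.9]{Foucart13}), now read off the vector of singular values $(\sigma_1(X),\ldots,\sigma_d(X))$. The plan is therefore to transport that argument to the Schatten setting, with the best rank-$r$ truncation of the singular values playing the role of a support $S$ of size $r$. The transport rests on two purely spectral ingredients: the subadditivity $\|A+B\|_{S_p}^p \le \|A\|_{S_p}^p + \|B\|_{S_p}^p$ valid for $0<p\le1$ (the ``$p$-triangle inequality'', a consequence of Weyl majorization and concavity of $t\mapsto t^p$), and a Mirsky-type inequality adapted to the concave gauge $t\mapsto t^p$ that relates the singular values of $X_0+W$ to those of $X_0$ and $W$ while exploiting $\rank(X_0)\le r$.

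I would dispatch the necessity direction first, since it is elementary. Suppose $\Phi$ violates \eqref{eq_weakSchattenNSP}, i.e.\ there is $W\in\mathcal{N}(\Phi)\setminus\{0\}$ with $\sum_{i=1}^r\sigma_i^p(W)\ge\sum_{i=r+1}^d\sigma_i^p(W)$. Splitting $W$ via its SVD into its best rank-$r$ approximation $W_{(r)}$ and its tail $W^{(r)}=W-W_{(r)}$, set $X_0:=W_{(r)}$, which has rank $\le r$. Since $\Phi(W)=0$ gives $\Phi(X_0)=\Phi(-W^{(r)})$, the competitor $-W^{(r)}$ is feasible for \eqref{modelp}, differs from $X_0$ (as $W\neq 0$), and because $W_{(r)}$ and $W^{(r)}$ have mutually orthogonal row and column spaces, $\|X_0\|_{S_p}^p=\sum_{i\le r}\sigma_i^p(W)\ge\sum_{i>r}\sigma_i^p(W)=\|{-W^{(r)}}\|_{S_p}^p$. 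Hence $X_0$ is not the unique minimizer, so recovery forces \eqref{eq_weakSchattenNSP}.

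For sufficiency, let $\rank(X_0)\le r$, let $Z\neq X_0$ be any other feasible matrix, and set $W:=X_0-Z\in\mathcal{N}(\Phi)\setminus\{0\}$. The heart of the matter is the spectral decomposition inequality
\[
\|X_0+W\|_{S_p}^p \ge \|X_0\|_{S_p}^p - \sum_{i=1}^{r}\sigma_i^p(W) + \sum_{i=r+1}^{d}\sigma_i^p(W),
\]
the exact analog of the bound $\|x+w\|_p^p\ge\|x\|_p^p-\|w_S\|_p^p+\|w_{\bar S}\|_p^p$ from the vector proof. Granting this, substituting $Z=X_0+(-W)$ (using that $\mathcal{N}(\Phi)$ is a subspace) and applying \eqref{eq_weakSchattenNSP} to $W\neq 0$ yields $\|Z\|_{S_p}^p>\|X_0\|_{S_p}^p$, i.e.\ recovery.

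The main obstacle is this last inequality \emph{with the sharp truncation threshold $r$}. The tempting shortcut — projecting $W$ onto the tangent space $T$ of the rank-$r$ manifold at $X_0$ and exploiting that the $T^\perp$-component has row and column spaces orthogonal to those of $X_0$ — only delivers the weaker threshold $2r$, because matrices in $T$ may have rank $2r$; this reflects the mismatch between the \emph{dimension} and the \emph{rank} of the ``support'' of a low-rank matrix, a phenomenon absent in the vector case where support size and sparsity coincide. Recovering the sharp threshold therefore requires comparing the singular values of $X_0+W$, $X_0$, and $W$ \emph{without a common basis}, since their singular subspaces are misaligned. I would combine the Weyl bound $\sigma_{i+r}(W)\le\sigma_i(X_0+W)$ (valid because $\sigma_{r+1}(X_0)=0$), which transfers the entire tail of $W$ into the spectrum of $X_0+W$, with a concave Mirsky-type inequality for the gauge $t\mapsto t^p$ controlling the leading part. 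Establishing such a concave analog of Mirsky's classical convex estimate $\sum_i\phi(\sigma_i(A)-\sigma_i(B))\le\sum_i\phi(\sigma_i(A-B))$ is the genuinely technical step and, I expect, is the core of the cited result \citep{Foucart16}.
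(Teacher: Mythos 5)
Your proposal is correct in substance, but note first what it is being compared against: the paper contains no proof of \cref{thm_foucart} at all --- the proposition is imported from \citep{Foucart16}, and the only commentary is the remark following it, which credits the sufficiency of \eqref{eq_weakSchattenNSP} to \citep{Oymak11} and the necessity to the concave generalization of Mirsky's inequality \citep{Audenaert14,Foucart16}. Your reconstruction is essentially the argument of that cited literature: the necessity direction via the SVD split of a null space element is exactly right and fully elementary, and the sufficiency direction is correctly reduced to the inequality $\|X_0+W\|_{S_p}^p \ge \|X_0\|_{S_p}^p-\sum_{i\le r}\sigma_i^p(W)+\sum_{i>r}\sigma_i^p(W)$, whose validity for $p<1$ is precisely what the concave Mirsky inequality supplies. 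Moreover, your assessment of where the difficulty sits is the mathematically correct one, even though it reverses the attribution in the paper's own remark: it is the \emph{sufficiency} direction that was asserted in \citep{Oymak11} with incomplete justification and that \citep{Audenaert14,Foucart16} finally settles, while necessity requires no perturbation theory whatsoever.

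One correction to your closing paragraph, where your guess at the assembly goes astray. The combination you propose --- Weyl's bound $\sigma_{i+r}(W)\le\sigma_i(X_0+W)$ for the tail plus a concave Mirsky estimate for the leading part --- cannot be made to produce the displayed inequality: spending the indices $i>r$ of $X_0+W$ on the Weyl bound only collects $\sum_{i>2r}\sigma_i^p(W)$, re-introducing exactly the $2r$ loss you were trying to avoid, and spending all indices on it leaves nothing with which to recover $\|X_0\|_{S_p}^p-\sum_{i\le r}\sigma_i^p(W)$. In fact no Weyl bound is needed. Once one has Audenaert's inequality $\sum_i\bigl|\sigma_i^p(A)-\sigma_i^p(B)\bigr|\le\|A-B\|_{S_p}^p$, a single application to $A=X_0$ and $B=-W$, whose difference is $X_0+W$, yields
\begin{equation*}
\|X_0+W\|_{S_p}^p\ \ge\ \sum_{i\le r}\bigl|\sigma_i^p(X_0)-\sigma_i^p(W)\bigr|+\sum_{i>r}\sigma_i^p(W)\ \ge\ \|X_0\|_{S_p}^p-\sum_{i\le r}\sigma_i^p(W)+\sum_{i>r}\sigma_i^p(W),
\end{equation*}
where the rank hypothesis enters solely through $\sigma_i(X_0)=0$ for $i>r$. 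The sharp threshold $r$ thus comes for free from the concave Mirsky inequality itself; that inequality is, as you correctly anticipate, the one genuinely technical ingredient of \citep{Foucart16}, and it is the only thing your argument leaves unproved.
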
 
\begin{remark}
The sufficiency of the Schatten-$p$ NSP \cref{eq_weakSchattenNSP} in \cref{thm_foucart} already been pointed out by Oymak et al. \citep{Oymak11}. The necessity as stated in the theorem, however, is due to a recent generalization of Mirsky's singular value inequalities to concave functions \citep{Audenaert14,Foucart16}.
\end{remark}
It can be seen that the (weak) Schatten-$p$ NSP of \eqref{eq_weakSchattenNSP} is a \emph{stronger} property for larger $p$ in the sense that if $0 < p' \leq p \leq 1$, the Schatten-$p$ property implies the Schatten-$p'$ property. 
Very related to that, it can be seen that for any $0 < p \leq 1$, the strong Schatten-$p$ null space property is implied by a sufficiently small \emph{rank restricted isometry constant $\delta_r$}, which is a classical tool in the analysis of low-rank matrix recovery algorithms \citep{Recht10,Candes10}.
\begin{definition}[Restricted isometry property (RIP)]
The \emph{restricted isometry constant $\delta_r > 0$ of order $r$} of the linear map $\Phi: M_{d_1\times d_2} \rightarrow \C^m$ is defined as the smallest number such that 
\[
(1-\delta_r) \|X\|_{F}^2 \leq \|\Phi(X)\|_{\ell_2}^2 \leq (1+\delta_r) \|X\|_{F}^2 
\]
for all matrices $X \in M_{d_1 \times d_2}$ of rank at most $r$.
\end{definition}
Indeed, it follows from the proof of \citep[Theorem 4.1]{Kutzarova15} that a restricted isometry constant of order $2r$ such that $\delta_{2r} < \frac{2}{\sqrt{2}+3} \approx 0.4531$ implies the strong Schatten-$p$ NSP of order $r$ with a constant $\gamma_r<1$ for any $0 < p \leq 1$. More precisely, it can be seen that $\delta_{2r} < \frac{2}{\sqrt{2}+3}$ implies that the strong Schatten-$p$ NSP \cref{k-NSP} of order $r$ holds with the constant $\gamma_r = \frac{(\sqrt{2}+1)^p}{2^p} \frac{\delta_{2r}^p}{(1-\delta_{2r})^p}$.

Linear maps that are instances drawn from certain random models 
%
are known to fulfill the restricted isometry property with high probability if the number of measurements is sufficiently large \citep{Davenport16}, and, a fortiori, the Schatten-$p$ null space property. In particular, this is true for \emph{(sub-)Gaussian} linear measurement maps $\Phi:M_{d_1\times d_2}\rightarrow \mathbb{C}^{m}$ whose matrix representation is such that
\begin{equation}\label{gauss}
\frac{1}{\sqrt{m}}\widetilde{\Phi} \in \mathbb{C}^{m\times d_1d_2}, \text{  where } \widetilde{\Phi} \text{  has i.i.d. standard (sub-)Gaussian entries,}
\end{equation} 
as it is summarized in the following lemma.
\begin{lemma} \label{remark_gaussian_NSP}
For any $0 < p \leq 1$, $0 < \gamma < 1$ and any (sub-)Gaussian random operator $\Phi: M_{d_1\times d_2} \rightarrow \C^m$ (e.g. as defined in \cref{gauss}), there exist constants $C_1 > 1$, $C_2>0$ such that if $m \geq C_1 r(d_1+d_2)$, the strong Schatten-$p$ null space property \eqref{k-NSP} of order $r$ with constant $\gamma_r< \gamma$ is fulfilled with probability at least $1-e^{-C_2m}$. 
\end{lemma}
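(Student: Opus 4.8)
The plan is to combine two known facts: the standard concentration of the restricted isometry constants for subgaussian measurement ensembles, and the implication from RIP to the strong Schatten-$p$ NSP quoted in the paragraph preceding \eqref{gauss}. Throughout, $p$ and $\gamma$ are fixed, so no uniformity over these parameters is required.

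First I would reduce the NSP statement to an RIP statement. Recall from the proof of \citep[Theorem 4.1]{Kutzarova15} that a restricted isometry constant of order $2r$ with $\delta_{2r} < \frac{2}{\sqrt 2 + 3}$ guarantees the strong Schatten-$p$ NSP of order $r$ (as in \Cref{NSPs}) with the explicit constant
\begin{equation*}
\gamma_r(\delta_{2r}) = \frac{(\sqrt 2 + 1)^p}{2^p}\,\left(\frac{\delta_{2r}}{1-\delta_{2r}}\right)^{\!p}.
\end{equation*}
Viewed as a function of $\delta_{2r} \in \big[0, \frac{2}{\sqrt 2 + 3}\big)$, this is continuous and strictly increasing, with $\gamma_r(0) = 0$; a direct evaluation shows that it tends to $1$ as $\delta_{2r} \to \frac{2}{\sqrt 2+3}$. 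Hence for the prescribed $\gamma \in (0,1)$ there is a threshold $\delta^\ast = \delta^\ast(p,\gamma) \in \big(0, \frac{2}{\sqrt 2+3}\big)$ such that $\delta_{2r} < \delta^\ast$ forces $\gamma_r < \gamma < 1$, so that the strong Schatten-$p$ NSP holds in the desired form. This reduces the claim to producing an operator with $\delta_{2r} < \delta^\ast$.

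Second I would invoke the standard RIP bound for subgaussian ensembles \citep{Davenport16}: for the normalized operator in \eqref{gauss} and any target accuracy $\delta \in (0,1)$, there exist constants $c_1(\delta), c_2(\delta) > 0$, depending only on $\delta$ and the subgaussian parameter, such that $m \ge c_1(\delta)\, r(d_1+d_2)$ implies $\delta_{2r} \le \delta$ with probability at least $1 - e^{-c_2(\delta)\, m}$. Applying this with $\delta = \delta^\ast/2$ and setting $C_1 := c_1(\delta^\ast/2)$ and $C_2 := c_2(\delta^\ast/2)$ — which depend only on $p, \gamma$ (through $\delta^\ast$) and on the subgaussian parameter — yields $\delta_{2r} < \delta^\ast$, hence $\gamma_r < \gamma$, on an event of probability at least $1 - e^{-C_2 m}$ whenever $m \ge C_1 r(d_1+d_2)$. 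That $C_1 > 1$ can be arranged without loss of generality by enlarging the constant. This is exactly the assertion.

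The only genuinely delicate point is the first step: one must verify that the constant $\gamma_r$ delivered by the RIP-to-NSP implication can be driven below any prescribed $\gamma < 1$ by shrinking $\delta_{2r}$, with a threshold $\delta^\ast$ that stays strictly positive and strictly below $\frac{2}{\sqrt 2+3}$. This is immediate from the closed form above, since $\frac{(\sqrt 2+1)^p}{2^p} \le 1$ for $0 < p \le 1$ and $\big(\delta_{2r}/(1-\delta_{2r})\big)^{p} \to 0$ as $\delta_{2r} \to 0$. The remaining step is a verbatim application of a textbook concentration estimate, so no real obstacle arises there.
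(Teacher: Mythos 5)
Your proposal is correct and follows essentially the same route as the paper, which establishes this lemma by exactly the two ingredients you use: the implication from $\delta_{2r} < \frac{2}{\sqrt{2}+3}$ to the strong Schatten-$p$ NSP of order $r$ with the explicit constant $\gamma_r = \frac{(\sqrt{2}+1)^p}{2^p}\frac{\delta_{2r}^p}{(1-\delta_{2r})^p}$ (from the proof of \citep[Theorem 4.1]{Kutzarova15}), combined with the standard subgaussian RIP guarantee of \citep{Davenport16} applied at a sufficiently small threshold. One small slip: your closing claim that $\frac{(\sqrt{2}+1)^p}{2^p} \leq 1$ for $0 < p \leq 1$ is false (since $\sqrt{2}+1 > 2$ this factor exceeds $1$), but this is immaterial, because it is a fixed bounded constant and $\big(\delta_{2r}/(1-\delta_{2r})\big)^p \to 0$ as $\delta_{2r} \to 0$ still forces $\gamma_r$ below any prescribed $\gamma$, so your threshold $\delta^\ast$ exists exactly as argued.
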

  
\subsection{Local convergence for \texorpdfstring{$p < 1$}{p < 1}}
In this section, we provide a convergence analysis for \texttt{HM-IRLS} covering several aspects. We are able to show that the algorithm converges to stationary points of a smoothed Schatten-$p$ functional $g_{\epsilon}^p$ as in $\eqref{eq_def_gpe}$ without any additional assumptions on the measurement map $\Phi$. Such guarantees have already been obtained for IRLS algorithms with one-sided reweighting as \texttt{IRLS-col} and \texttt{IRLS-row}, in particular for $p=1$ by Fornasier, Rauhut \& Ward \citep{Fornasier11} and for $0<p \leq 1$ by Mohan \& Fazel \citep{Mohan10}. 

Beyond that, assuming the measurement operator fulfills an appropriate Schatten-$p$ null space property as defined in \cref{NSPs}, we show the a-posteriori exact recovery statement that \texttt{HM-IRLS} converges to the low-rank matrix $X_0$ if $\lim\limits_{n \rightarrow \infty}\epsilon_n= 0$, which only was shown for one-sided \texttt{IRLS} for the case $p=1$ by \cite{Fornasier11}. 

Moreover, we provide a local convergence guarantee stating that \texttt{HM-IRLS} recovers the low-rank matrix $X_0$ if we obtain an iterate $X^{(\overline{n})}$ that is close enough to $X_0$, which is novel for IRLS algorithms. 

Let $0 < p \leq 1$ and $\epsilon > 0$. To state the theorem, we introduce the  \emph{$\epsilon$-perturbed Schatten-$p$ functional $g^p_{\epsilon}: M_{d_1 \times d_2} \rightarrow \R_{\geq 0}$} such that
\begin{equation} \label{eq_def_gpe}
g^p_{\epsilon} (X)= \sum\limits^{d}_{i=1}  (\sigma_i(X)^2 + \epsilon^2)^{\frac{p}{2}}
\end{equation}
where $\sigma(X) \in \R^{d}$ denotes the vector of singular values of $X \in M_{d_1 \times d_2}$.

\begin{theorem}\label{conv}
Let ${\Phi}:M_{d_1 \times d_2}  \rightarrow  \mathbb{C}^{ m}$ be a linear operator and $Y \in \Ran(\Phi)$ a vector in its range. Let $(X^{(n)})_{n\geq 1}$ and $(\epsilon^{(n)})_{n \geq 1}$ be the sequences produced by \cref{algo1} for input parameters $\Phi,Y,r$ and $0 < p \leq 1$, let $\epsilon = \lim_{n\to \infty} \epsilon^{(n)}$. 
\begin{enumerate}
\item[(i)] If $\epsilon=0$ and if ${\Phi}$ fulfills the strong Schatten-$p$ NSP \cref{k-NSP} of order $r$ with constant $0 < \gamma_r < 1$, 
then the sequence $(X^{(n)})_{n\geq 1}$ converges to a matrix $\overline{X}\in M_{d_1 \times d_2}$ of rank at most $r$ that is the unique minimizer of the Schatten-$p$ minimization problem \cref{modelp}.
Moreover, there exists an absolute constant $\hat{C} > 0$ such that for any $X$ with ${\Phi}(X)={Y}$ and any $\widetilde{r} \leq r$, it holds that 
\begin{equation*}
\|X-\overline{X}\|^p_{F}\leq \frac{\hat{C}}{r^{1 - p/2}}\beta_{\widetilde{r}}(X)_{S_p},
\end{equation*}
where $\hat{C}=\frac{2^{p+1} \gamma_r^{1-p/2}}{1-\gamma_r}$ and $\beta_{\widetilde{r}}(X)_{S_p}$ is the best rank-$\widetilde{r}$ Schatten-$p$ approximation error of $X$, i.e.,
 \begin{equation}\label{beta}
\beta_{\widetilde{r}} (X)_{S_p} := \inf \big\{\|X - \widetilde{X} \|^p_{S_p},\, \widetilde{X}\in M_{d_1\times d_2} \text{ has rank }\widetilde{r}\big\}.
\end{equation} \label{conv_statement1}
\item[(ii)] If $\epsilon>0$, then each accumulation point $\overline{X}$ of $(X^{(n)})_{n\geq 1}$ is a stationary point of the $\epsilon$-perturbed Schatten-$p$ functional $g^p_{\epsilon}$ of \cref{eq_def_gpe} under the linear constraint $\Phi(X)=Y$.
If additionally $p = 1$, then $\overline{X}$ is the unique global minimizer of $g^p_{\epsilon}$. 
\item[(iii)]
Assume that there exists a matrix $X_0 \in M_{d_1 \times d_2}$ with $\Phi(X_0) = Y$ such that $\rank(X_{0})=r \leq \frac{\min(d_1,d_2)}{2}$, a constant $0 < \zeta < 1$ and an iteration $\overline{n} \in \N$ such that
 \begin{equation*}
\|X^{(\overline{n})}-X_0\|_{S_\infty} \leq \zeta \sigma_{\widetilde{r}}(X_{0}) 
 \end{equation*}
 and $\epsilon^{\overline{n}}=\sigma_{r+1}(X^{\overline{n}})$. 
 If $\Phi$ fulfills the strong Schatten-$p$ NSP of order $2r$ with $\gamma_{2r}< 1$ and if the condition number $\kappa=\frac{\sigma_1 (X_0)}{\sigma_{r} (X_0)}$ of $X_0$ and $\zeta$ are sufficiently small (see condition \cref{eq_mu_condition2} and formula \cref{eq_mu_definition}), then 
\[
X^{(n)}\to X_0 \quad \text{ for } n\to \infty.
\]
\end{enumerate}
\end{theorem}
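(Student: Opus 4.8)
The parts (i) and (ii) I would obtain by adapting the monotonicity machinery developed for \texttt{IRLS-col} in \citep{Fornasier11,Mohan10}: interpreting the update \cref{xmin} together with \cref{epsmin} as alternating minimizations of an auxiliary functional majorizing $g^p_\epsilon$, one shows the objective decreases, the iterates stay bounded, and any accumulation point satisfies the stationarity condition of $g^p_\epsilon$ under $\Phi(X)=Y$ (with global optimality in the convex case $p=1$); for (i) the strong NSP forces the $\epsilon=0$ limit to have rank at most $r$, and \cref{thm_foucart} together with a standard best-approximation estimate yields the displayed error bound. The genuinely new and hard statement is the local superlinear convergence (iii), on which I focus.

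The backbone is the optimality of \cref{xmin}. Since $X^{(n+1)}$ and $X_0$ both satisfy the constraint, the error $E^{(n+1)}:=X^{(n+1)}-X_0$ lies in $\mathcal N(\Phi)$, and the first-order condition for the weighted least-squares problem gives $\langle X^{(n+1)}_{\vecc},\widetilde W^{(n)}Z_{\vecc}\rangle=0$ for every $Z\in\mathcal N(\Phi)$. Choosing $Z=E^{(n+1)}$ and expanding $X^{(n+1)}=X_0+E^{(n+1)}$ yields the identity
\begin{equation*}
\|E^{(n+1)}_{\vecc}\|^2_{\ell_2(\widetilde W^{(n)})}=-\langle (X_0)_{\vecc},\widetilde W^{(n)}E^{(n+1)}_{\vecc}\rangle,
\end{equation*}
which I would exploit by lower-bounding the left-hand side and upper-bounding the right-hand side through the coefficient representation \cref{eq_IRLS_general_QF} in the basis $(u_i^{(n)}v_j^{(n)*})$. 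The decisive structural fact, visible in \cref{fig:IRLS_H_matrices}, is that the harmonic-mean coefficient $H^{(n)}_{ij}$ is of the \emph{large} order $(\epsilon^{(n)})^{p-2}$ exactly when both $i>r$ and $j>r$ (i.e.\ on $(T^{(n)})^\perp$) and of the \emph{moderate} order $\sigma_r(X^{(n)})^{p-2}$ on all of $T^{(n)}$. I would first convert the closeness hypothesis via Weyl's inequality and a Wedin/Davis--Kahan bound into control of the angle between $T^{(n)}$ and $T(X_0)$, and in particular show that the off-tangent component $P_{(T^{(n)})^\perp}X_0$ of the rank-$r$ matrix $X_0$ is \emph{second order}, of size $\lesssim\|E^{(n)}\|_{S_\infty}^2/\sigma_r(X_0)$.

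With this in hand the left side obeys $\|E^{(n+1)}_{\vecc}\|^2_{\ell_2(\widetilde W^{(n)})}\gtrsim(\epsilon^{(n)})^{p-2}\|P_{(T^{(n)})^\perp}E^{(n+1)}\|_F^2$, while splitting $X_0=P_{T^{(n)}}X_0+P_{(T^{(n)})^\perp}X_0$ on the right pairs the tiny off-tangent part with the large weight $(\epsilon^{(n)})^{p-2}$ and the tangent part with moderate weights, giving a bound $\lesssim(\epsilon^{(n)})^{p-2}\|P_{(T^{(n)})^\perp}X_0\|_F\,\|P_{(T^{(n)})^\perp}E^{(n+1)}\|_F+\sigma_r(X_0)^{p-1}\|P_{T^{(n)}}E^{(n+1)}\|_*$. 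Since $P_{T^{(n)}}E^{(n+1)}$ has rank at most $2r$ and $E^{(n+1)}\in\mathcal N(\Phi)$, the strong Schatten-$p$ NSP of order $2r$ \cref{k-NSP} bounds this tangent (top-$2r$) mass by a $\gamma_{2r}$-fraction of the off-tangent tail; this is where $\gamma_{2r}<1$ is used to absorb the tangent term. Dividing out $\|P_{(T^{(n)})^\perp}E^{(n+1)}\|_F$ and using $\epsilon^{(n)}=\sigma_{r+1}(X^{(n)})\le\|E^{(n)}\|_{S_\infty}$ from \cref{epsmin} then produces, after re-adding the tangent part, a one-step estimate of the form $\|E^{(n+1)}\|_{S_\infty}\le\mu\,\sigma_r(X_0)^{p-1}\|E^{(n)}\|_{S_\infty}^{2-p}$, i.e.\ in relative error $\rho^{(n)}:=\|E^{(n)}\|_{S_\infty}/\sigma_r(X_0)$ the superlinear recursion $\rho^{(n+1)}\le C\,(\rho^{(n)})^{2-p}$ with $C=C(\gamma_{2r},\kappa,p)$. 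Finally, under the smallness conditions on $\zeta$ and $\kappa$ (matching \cref{eq_mu_condition2} and the constant \cref{eq_mu_definition}) ensuring $C\,\zeta^{1-p}<1$, a straightforward induction shows $\rho^{(n)}$ stays in the neighborhood and contracts to $0$, hence $X^{(n)}\to X_0$.

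The hardest part, I expect, is the perturbation bookkeeping coupling the \emph{moving} tangent space $T^{(n)}$ to the fixed $T(X_0)$ and controlling the mixed tangent/off-tangent cross terms: one must verify that the large weight $(\epsilon^{(n)})^{p-2}$ multiplying the second-order-small $P_{(T^{(n)})^\perp}X_0$ does not overwhelm the estimate, and that the spread of the moderate weights across $T^{(n)}$ (ranging over $\sigma_1(X^{(n)})^{p-2}$ to $\sigma_r(X^{(n)})^{p-2}$) stays bounded, which is precisely why the condition number $\kappa$ must be small. Making the dependence of $C$ on $\gamma_{2r},\kappa,\zeta,p$ explicit, so that the contraction factor is genuinely below $1$, is the delicate quantitative core of the argument.
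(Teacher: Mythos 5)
Your overall strategy coincides with the paper's: parts (i) and (ii) are indeed obtained there by the auxiliary-functional/monotonicity machinery of \cref{sec_theoretical_analysis} plus the NSP-based reverse triangle inequality, and for (iii) the paper likewise proceeds (via \cref{rate}) through exactly the ingredients you list --- the orthogonality relation of the weighted least-squares minimizer, the block structure of the harmonic-mean weights on $T^{(n)}$ versus $(T^{(n)})^\perp$, Wedin's bound making $U_{T_c}^{(n)*}X_0V_{T_c}^{(n)}$ second order, the Schatten-$p$ NSP of order $2r$ to absorb the tangent mass, and an induction under the smallness condition \cref{eq_mu_condition2}.

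There is, however, one concrete gap in your induction for (iii). Your one-step estimate rests on the lower bound $\|E^{(n+1)}_{\vecc}\|^2_{\ell_2(\widetilde{W}^{(n)})}\gtrsim(\epsilon^{(n)})^{p-2}\|P_{(T^{(n)})^\perp}E^{(n+1)}\|_F^2$ and on the cancellation of the factor $(\epsilon^{(n)})^{p-2}$ between the two sides. But the weights on the $(T^{(n)})^\perp$ block are $H^{(n)}_{ij}\asymp\big(\sigma_i^2(X^{(n)})+\epsilon^{(n)2}\big)^{(p-2)/2}$ for $i,j>r$, so the correct lower bound involves $\big(\sigma_{r+1}^2(X^{(n)})+\epsilon^{(n)2}\big)^{(p-2)/2}$ rather than $(\epsilon^{(n)})^{p-2}$; these agree only when $\sigma_{r+1}(X^{(n)})\lesssim\epsilon^{(n)}$. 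The update rule \cref{epsmin} guarantees only $\epsilon^{(n)}\le\sigma_{r+1}(X^{(n)})$, and the equality $\epsilon^{(n)}=\sigma_{r+1}(X^{(n)})$ is an assumption at $n=\overline{n}$ only; for later iterations $\epsilon$ can stall at an earlier value, i.e.\ $\epsilon^{(n)}=\epsilon^{(n-1)}<\sigma_{r+1}(X^{(n)})$, in which case your estimate picks up the uncontrolled factor $\big(\sigma_{r+1}(X^{(n)})/\epsilon^{(n)}\big)^{2-p}$ and the recursion $\rho^{(n+1)}\le C(\rho^{(n)})^{2-p}$ does not follow as written. The paper's proof of \cref{rate} closes exactly this hole by a case distinction inside the induction: if $\epsilon$ has stalled at $\epsilon^{(n'')}=\sigma_{r+1}(X^{(n'')})$ for some $n''<n'$, it invokes the inductive monotonicity $\|\eta^{(n')}\|_{S_p}\le\|\eta^{(n'')}\|_{S_p}$ together with an NSP argument to bound $\sum_{i>r}\big(\sigma_i^2(X^{(n')})/\epsilon^{(n'')2}+1\big)^{p/2}$ by $\big(\tfrac{3+\gamma_{2r}}{1-\gamma_{2r}}\big)^{2-p}(d-r)^{2-p}$ up to the bookkeeping constants, which restores the superlinear step with the constant $\mu$ of \cref{eq_mu_definition}. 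Your sketch needs this (or an equivalent) additional argument before the induction can be iterated beyond $\overline{n}$.
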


It is important to note that by using \cref{remark_gaussian_NSP}, it follows that the assertions of \cref{conv}(i) and (iii) hold for (sub-)Gaussian operators \cref{gauss} with high probability in the regime of measurements of optimal sample complexity order. In particular, there exist constant oversampling factors $\rho_1,\rho_2 \geq 1$ such that the assertions of (i) and (iii) hold with high probability if $m > \rho_k r (d_1 + d_2)$, $k \in \{1,2\}$, respectively. 


%
\begin{remark} \label{remark_noNSP_MC}
However, if $m < d_1 d_2$, null space property-type assumptions as \cref{k-NSP} or \cref{eq_weakSchattenNSP} do not hold for the important case of matrix completion-type measurements \citep{CR09}, where $\Phi(X)$ is given as $m$ sample entries
\begin{equation}\label{def_MC_setting}
\Phi(X)_{\ell} = X_{i_\ell,j_\ell}, \quad\quad \ell = 1,\ldots,m,
\end{equation}
and $(i_\ell,j_\ell) \in [d_1] \times [d_2]$ for all $\ell \in [m]$, of the matrix $X \in M_{d_1 \times d_2}$, which also were considered in the example of \cref{sec_HM_IRLS}.

This means that parts (i) and (iii) of \cref{conv} do, unfortunately, not apply for matrix completion measurements, which define a very relevant class of low-rank matrix recovery problems. This problem is shared by any existing theory for IRLS algorithms for low-rank matrix recovery \citep{Fornasier11,Mohan10}. However, in \cref{sec_num}, we provide strong numerical evidence that \texttt{HM-IRLS} exhibits properties as predicted by (i) and (iii) of \cref{conv} even for the matrix completion setting.
We leave the extension of the theory of \texttt{HM-IRLS} to matrix completion measurements as an open problem to be tackled by techniques different from uniform null space properties \citep[Section V]{Davenport16}. \end{remark}
\subsection{Locally superlinear convergence rate for \texorpdfstring{$p < 1$}{p < 1}}

Next, we state the second main theoretical result of this paper, \cref{rate}. It shows that in a neighborhood of a low-rank matrix $X_0$ that is compatible with the measurement vector $Y$, the algorithm \texttt{HM-IRLS} converges to $X_0$ with a \emph{convergence rate} that is \emph{superlinear of the order $2-p$}, if the operator $\Phi$ fulfills an appropriate Schatten-$p$ null space property. 

\begin{theorem}[Locally Superlinear Convergence Rate] \label{rate}
Assume that the linear map $\Phi: M_{d_1 \times d_2} \to \C^m$ fulfills the strong Schatten-$p$ NSP of order $2r$ with constant $\gamma_{2r} < 1$ and that there exists a matrix $X_{0} \in M_{d_1 \times d_2}$ with $\rank(X_{0})=r \leq \frac{\min(d_1,d_2)}{2}$ such that $\Phi(X_0)={Y}$, let $\Phi,Y,r$ and $0 < p \leq 1$ be the input parameters of \cref{algo1}. Moreover, let $\kappa=\frac{\sigma_1 (X_0)}{\sigma_r(X_0)}$ be the condition number of $X_0$ and  $\eta^{(n)} := X^{(n)}-X_{0}$ be the error matrices  of the $n$-th output of \cref{algo1} for $n \in \N$. \\
Assume that there exists an iteration $\overline{n} \in \N$ and a constant $0 < \zeta < 1$ such that
 \begin{equation}\label{rho}
\|\eta^{(\overline{n})}\|_{S_\infty} \leq \zeta \sigma_r(X_{0}) 
 \end{equation}
and $\epsilon^{(\overline{n})}=\sigma_{r+1}(X^{(\overline{n})})$.
If additionally the condition number $\kappa$ and $\zeta$ are small enough, or more precisely, if
\begin{equation} \label{eq_mu_condition2}
\mu \|\eta^{(\overline{n})}\|_{S_\infty}^{p(1-p)} < 1
\end{equation}
with the constant
\begin{equation} \label{eq_mu_definition}
\mu := 2^{5p}  (1+\gamma_{2r})^{p} \Big(\frac{\gamma_{2r}(3+\gamma_{2r})(1+\gamma_{2r})}{(1-\gamma_{2r})}\Big)^{2-p}\Big(\frac{d-r}{r}\Big)^{2-\frac{p}{2}} r^p\frac{\sigma_r(X_0)^{p(p-1)}}{(1-\zeta)^{2p}} \kappa^p 
\end{equation}
then 
 \begin{equation*}
\|\eta^{(n+1)}\|_{S_\infty}\leq \mu^{1/p} \left(\|\eta^{(n)}\|_{S_\infty} \right)^{2-p} \quad \text{ and }  \quad \|\eta^{(n+1)}\|_{S_p}\leq \mu^{1/p} \left(\|\eta^{(n)}\|_{S_p} \right)^{2-p}
\end{equation*}
for all $n \geq \overline{n}$. 
\end{theorem}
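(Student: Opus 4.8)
The plan is to combine the first-order optimality of the weighted least squares step \eqref{xmin} with the strong Schatten-$p$ NSP \eqref{k-NSP}, carrying out all estimates in the singular value frame of $X^{(n)}$. \textbf{Optimality identity.} Since both $X^{(n+1)}$ and $X_0$ satisfy $\Phi(\cdot)=Y$, the error $\eta^{(n+1)}=X^{(n+1)}-X_0$ lies in $\mathcal{N}(\Phi)$. The minimizer in \eqref{xmin} is characterized by the normal equations $\langle \widetilde{\mathcal{W}}^{(n)}(X^{(n+1)}),Z\rangle_F=0$ for all $Z\in\mathcal{N}(\Phi)$; testing with $Z=\eta^{(n+1)}$, using self-adjointness of $\widetilde{\mathcal{W}}^{(n)}$ and substituting $X^{(n+1)}=X_0+\eta^{(n+1)}$ yields the fundamental identity $\langle\eta^{(n+1)},\widetilde{\mathcal{W}}^{(n)}(\eta^{(n+1)})\rangle_F=-\langle X_0,\widetilde{\mathcal{W}}^{(n)}(\eta^{(n+1)})\rangle_F$. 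Diagonalizing $\widetilde{\mathcal{W}}^{(n)}$ in the orthonormal basis $u_i^{(n)}(v_j^{(n)})^{*}$ as in \eqref{eq_IRLS_general_QF}, both sides become sums over the weight coefficients $H_{ij}^{(n)}=2[\,\overline{\sigma}_i^{\,2-p}+\overline{\sigma}_j^{\,2-p}\,]^{-1}$, with $\overline{\sigma}_i=(\sigma_i(X^{(n)})^2+(\epsilon^{(n)})^2)^{1/2}$, acting on the coefficients $\eta_{ij}=\langle u_i^{(n)},\eta^{(n+1)}v_j^{(n)}\rangle$ and $(x_0)_{ij}=\langle u_i^{(n)},X_0v_j^{(n)}\rangle$.

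\textbf{Spectral localization.} Using the closeness assumption \eqref{rho} and Weyl's inequality I would show that $\sigma_i(X^{(n)})\geq(1-\zeta)\sigma_r(X_0)$ for $i\leq r$, while $\sigma_i(X^{(n)})\leq\|\eta^{(n)}\|_{S_\infty}$ for $i>r$; together with the hypothesis $\epsilon^{(n)}=\sigma_{r+1}(X^{(n)})\leq\|\eta^{(n)}\|_{S_\infty}$ this pins down the magnitude of $H_{ij}^{(n)}$ in each spectral region: of order $\sigma_r(X_0)^{-(2-p)}$ whenever $\min(i,j)\leq r$, and as large as $\|\eta^{(n)}\|_{S_\infty}^{-(2-p)}$ on the block $i,j>r$. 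A Wedin/Davis--Kahan perturbation bound then shows that the top-$r$ singular subspaces of $X^{(n)}$ and $X_0$ are $O(\|\eta^{(n)}\|_{S_\infty}/\sigma_r(X_0))$-close, so that the components of the \emph{exactly} rank-$r$ matrix $X_0$ that lie off the tangent frame are small: linearly small, $|(x_0)_{ij}|\lesssim\kappa\|\eta^{(n)}\|_{S_\infty}$, when exactly one index exceeds $r$, and \emph{quadratically} small, $|(x_0)_{ij}|\lesssim\kappa\|\eta^{(n)}\|_{S_\infty}^2/\sigma_r(X_0)$, on the block $i,j>r$.

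\textbf{Assembling the recursion.} Inserting these estimates into the fundamental identity and grouping the right-hand side by spectral region, two mechanisms combine. First, the largest weights $H_{ij}^{(n)}\sim\|\eta^{(n)}\|_{S_\infty}^{-(2-p)}$ occur precisely on the block $i,j>r$ where $X_0$ is quadratically small, so the high-weight region contributes at the scale $\|\eta^{(n)}\|_{S_\infty}^{-(2-p)}\cdot\|\eta^{(n)}\|_{S_\infty}^{2}=\|\eta^{(n)}\|_{S_\infty}^{p}$ per unit of $\eta^{(n+1)}$. Second, and crucially, I would \emph{not} bound $\eta^{(n+1)}$ by its full weighted norm---this is hopeless because the bulk of $X_0$ sits in the top-left block where the weights are smallest---but instead invoke the strong Schatten-$p$ NSP of order $2r$ on $\eta^{(n+1)}\in\mathcal{N}(\Phi)$ to dominate its top $2r$ singular values (in particular its top-left block) by its tail, which is exactly where the weights are largest. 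Pairing the resulting lower bound on $\langle\eta^{(n+1)},\widetilde{\mathcal{W}}^{(n)}(\eta^{(n+1)})\rangle_F$ in terms of $\|\eta^{(n+1)}\|_{S_\infty}$ (respectively $\|\eta^{(n+1)}\|_{S_p}$) against the regionwise upper bound on $-\langle X_0,\widetilde{\mathcal{W}}^{(n)}(\eta^{(n+1)})\rangle_F$ yields, after dividing out one weighted-norm factor, the superlinear recursion $\|\eta^{(n+1)}\|_{S_\infty}\leq\mu^{1/p}\|\eta^{(n)}\|_{S_\infty}^{2-p}$. Here $\mu$ collects the NSP constant $\gamma_{2r}$ (raised to a power $2-p$ from the NSP transfer step), the dimension factor $((d-r)/r)^{2-p/2}$ from counting the high-weight block, the condition number through $\kappa^p$, and the scale $\sigma_r(X_0)^{p(p-1)}$ produced by the competition between the weight powers and the quadratic smallness of $X_0$, reproducing \eqref{eq_mu_definition}; the same bookkeeping with $S_p$ in place of $S_\infty$ gives the second recursion.

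\textbf{Closing the induction, and the main obstacle.} Finally I would run the induction over $n\geq\overline{n}$: hypothesis \eqref{eq_mu_condition2}, $\mu\|\eta^{(\overline{n})}\|_{S_\infty}^{p(1-p)}<1$, is exactly the statement that the first step is contractive, $\mu^{1/p}\|\eta^{(\overline{n})}\|_{S_\infty}^{2-p}<\|\eta^{(\overline{n})}\|_{S_\infty}$. Since $\|\eta^{(n)}\|_{S_\infty}$ then decreases, the factor $\mu\|\eta^{(n)}\|_{S_\infty}^{p(1-p)}$ stays below $1$, which simultaneously propagates the recursion to all $n\geq\overline{n}$, keeps every iterate inside the neighborhood \eqref{rho} where the Weyl and Wedin estimates remain valid, and (because $\sigma_{r+1}(X^{(n)})\leq\|\eta^{(n)}\|_{S_\infty}$ decreases monotonically) preserves the consistency condition $\epsilon^{(n+1)}=\sigma_{r+1}(X^{(n+1)})$ used throughout. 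I expect the genuine difficulty to be the balancing in the third step: the top-left block carries almost all of $X_0$ but only the smallest weights, so a naive weighted Cauchy--Schwarz against $X_0$ destroys any superlinearity; the exponent $2-p$ survives only if the NSP is used to tie control of that block to the small-singular-value tail---precisely where the weights are largest and $X_0$ is quadratically negligible---and if the dependence of the weights on $\epsilon^{(n)}=\sigma_{r+1}(X^{(n)})$ is tracked sharply enough to land the explicit constant $\mu$.
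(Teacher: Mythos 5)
Your overall architecture coincides with the paper's: the normal-equation identity $\|\eta^{(n+1)}_{\vecc}\|^2_{\ell_2(\widetilde{W}^{(n)})} = -\langle \widetilde{W}^{(n)}(X_{0})_{\vecc},\eta^{(n+1)}_{\vecc}\rangle_{\ell_2}$ is exactly the starting point of \cref{lemma_localrate_2}; your four-region estimate of $X_0$ against the weights $H^{(n)}_{ij}$ in the singular frame of $X^{(n)}$, with Weyl giving $\sigma_i(X^{(n)})\geq(1-\zeta)\sigma_r(X_0)$ and Wedin giving the quadratic smallness of $X_0$ on the block where both indices exceed $r$, reproduces the estimates \cref{eq_H_TT,eq_H_TTc,eq_H_TcT,eq_H_TcTc}; and your use of the strong Schatten-$p$ NSP of order $2r$ to dominate the top-$2r$ part of $\eta^{(n+1)}$ by its tail (rather than a naive weighted Cauchy--Schwarz) is precisely the mechanism of \cref{lemma_localrate_1}.

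The gap is in how you close the induction. You assert that, because $\sigma_{r+1}(X^{(n)})\leq\|\eta^{(n)}\|_{S_\infty}$ decreases, the consistency condition $\epsilon^{(n)}=\sigma_{r+1}(X^{(n)})$ is ``preserved throughout.'' It is not: the update \cref{epsmin} is $\epsilon^{(n+1)}=\min\big(\epsilon^{(n)},\sigma_{r+1}(X^{(n+1)})\big)$, and equality $\epsilon^{(n+1)}=\sigma_{r+1}(X^{(n+1)})$ requires $\sigma_{r+1}(X^{(n+1)})\leq\epsilon^{(n)}=\sigma_{r+1}(X^{(n)})$, which does not follow from $\|\eta^{(n+1)}\|_{S_\infty}\leq\|\eta^{(n)}\|_{S_\infty}$; the value $\sigma_{r+1}(X^{(n)})$ may lie far below $\|\eta^{(n)}\|_{S_\infty}$ (it can even vanish), and nothing prevents $\sigma_{r+1}(X^{(n+1)})$ from exceeding it, whereupon $\epsilon$ freezes at the old value. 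This is not a cosmetic issue, because your superlinear step hinges on cancelling the factor $(\epsilon^{(n)})^{-p(2-p)}$ produced by the quadratic-smallness estimate (\cref{lemma_localrate_2} measures the $(T_c,T_c)$ contribution relative to $\epsilon^{(n)}$) against the tail factor $\big(\sum_{i>r}(\sigma_i^2(X^{(n)})+\epsilon^{(n)2})^{p/2}\big)^{2-p}\sim\sigma_{r+1}(X^{(n)})^{p(2-p)}$ from \cref{lemma_localrate_1}; if $\epsilon^{(n)}\ll\sigma_{r+1}(X^{(n)})$ this ratio blows up and superlinearity is lost. The paper's proof of \cref{rate} devotes a separate branch to exactly this frozen-$\epsilon$ case: if $\epsilon^{(n')}=\epsilon^{(n'')}=\sigma_{r+1}(X^{(n'')})$ for an earlier iteration $n''$, it bounds $\sum_{i>r}\big(\sigma_i^2(X^{(n')})/\epsilon^{(n'')2}+1\big)^{p/2}$ by $\|\eta^{(n')}\|_{S_p}^p/\epsilon^{(n'')p}+(d-r)$, invokes the inductive hypothesis $\|\eta^{(n')}\|_{S_p}\leq\|\eta^{(n'')}\|_{S_p}$ (this is the real reason the $S_p$ recursion must be carried along, not mere symmetry), and then uses an NSP-based reverse triangle inequality (an $S_p$ analogue of \cref{eq_schatten_2_NSPestimate}) together with $\|X^{(n'')}_{T_c}\|_{S_p}^p\leq(d-r)\epsilon^{(n'')p}$ to arrive at the constant $\big(\tfrac{3+\gamma_{2r}}{1-\gamma_{2r}}\big)^{2-p}(d-r)^{2-p}$. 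That factor $\tfrac{3+\gamma_{2r}}{1-\gamma_{2r}}$ appearing in \cref{eq_mu_definition} comes only from this branch, so your bookkeeping of $\mu$ cannot reproduce it; you need to add this case analysis for the induction to close.
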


We think that the result of \cref{rate} is remarkable, since there are only few low-rank recovery algorithms which exhibit either theoretically or practically verifiable superlinear convergence rates. In particular, although the algorithms of \citep{Mishra13} and \texttt{NewtonSLRA} of \citep{Schost16} do show superlinear convergence rates, the first are not competitive to \texttt{HM-IRLS} in terms of sample complexity and the second has neither applicable theoretical guarantees for most of the interesting problems nor the ability of solving medium size problems.
\begin{remark} 
We observe that while the statement describes the observed rates of convergence very accurately (cf. \cref{sec_num_ratecomparisons}), the assumption \cref{eq_mu_condition2} on the neighborhood that enables convergence of a rate $2-p$ is more pessimistic than our numerical experiments suggest. Our experiments confirm that the local convergence rate of order $2-p$ also holds for matrix completion measurements, where the assumption of a Schatten-$p$ null space property fails to hold, cf. \cref{sec_num}.
\end{remark}



\subsection{Discussion and comparison with existing IRLS algorithms}

Optimally, we would like to have a statement in \cref{conv} about the accumulation points $\overline{X}$ being \emph{global minimizers} of $g_{\epsilon}^p$, instead of mere stationary points \citep[Theorem 6.11]{Fornasier11}, \citep[Theorem 5.3]{Daubechies10}. A statement that strong is, unfortunately, difficult to achieve due to the non-convexity of the Schatten-$p$ quasinorm and of the $\epsilon$-perturbed version $g_{\epsilon}^p$. Nevertheless, our theorems can be seen as analogues of \citep[Theorem 7.7]{Daubechies10}, which discusses the convergence properties of an IRLS algorithm for sparse recovery based on $\ell_p$-minimization with $p < 1$.

As already mentioned in previous sections, Fornasier, Rauhut \& Ward \citep{Fornasier11} and Mohan \& Fazel \citep{Mohan10} proposed IRLS algorithms for low-rank matrix recovery and analysed their convergence properties.
The algorithm of \citep{Fornasier11} corresponds (almost) to \texttt{IRLS-col} with $p=1$ as explained in \Cref{sec_HM_IRLS}.
In this context, \cref{conv} recovers the results \citep[Theorem 6.11(i-ii)]{Fornasier11} for $p=1$ and generalize them, with weaker conclusions due to the non-convexity, to the cases $0 < p <1$. 
The algorithm \texttt{IRLS-$p$} of \citep{Mohan10} is similar to the former, but differs in the choice of the $\epsilon$-smoothing and also covers non-convex choices $0 < p <1$. 
However, we note that in the non-convex case, its convergence result \citep[Theorem 5.1]{Mohan10} corresponds to \cref{conv}(ii), but does not provide statements similar to (i) and (iii) of \cref{conv}. 

\Cref{rate} with its analysis of the convergence rate is new in the sense that to the best of our knowledge, there are no convergence rate proofs for IRLS algorithms for the low-rank matrix recovery problem in the literature. Indeed, we refer to \cref{remark_Mohanlaueftnicht} in \cref{sec_localconvrate} for an explanation why the variants of \citep{Fornasier11} and \citep{Mohan10} cannot exhibit superlinear convergence rates, unlike \texttt{HM-IRLS}.

We also note that there is a close connection between the statements of \cref{conv,rate} and results that were obtained by Daubechies, DeVore, Fornasier and G\"unt\"urk \citep[Theorems 7.7 and 7.9]{Daubechies10} for an IRLS algorithm dedicated to the sparse vector recovery problem. 

\section{Numerical experiments} \label{sec_num}

In this section, we demonstrate first that the superlinear convergence rate that was proven theoretically for \Cref{algo1} (\texttt{HM-IRLS}) in \Cref{rate} can indeed be accurately verified in numerical experiments, even beyond measurement operators fulfilling the strong null space property, 
%
and compare its performance to other variants of \texttt{IRLS}.

In \Cref{sec_comp_algs}, we then examine the recovery performance of \texttt{HM-IRLS} for the matrix completion setting with the performance of other state-of-the-art algorithms comparing the measurement complexities that are needed for successful recovery for many random instances. 

The numerical experiments are conducted on Linux and Mac systems with MATLAB R2017b. 
An implementation of the \texttt{HM-IRLS} algorithm and a minimal test example are available at \href{https://www-m15.ma.tum.de/Allgemeines/SoftwareSite}{https://www-m15.ma.tum.de/Allgemeines/SoftwareSite}.

\subsection{Experimental setup} \label{sec_measurement_setting}
In the experiments, we sample $(d_1 \times d_2)$ dimensional ground truth matrices $X_0$ of rank $r$ such that $X_0 = U \Sigma V^*$, where $U \in \R^{d_1 \times r}$ and $V \in \R^{d_2 \times r}$ are independent matrices with i.i.d. standard Gaussian entries and $\Sigma \in \R^{r \times r}$ is a diagonal matrix with i.i.d. standard Gaussian diagonal entries, independent from $U$ and $V$.

We recall that a rank-$r$ matrix $X \in M_{d_1 \times d_2}$ has $d_f=r(d_1+d_2-r)$ degrees of freedom, which is the theoretical lower bound on the number of measurements that are necessary for exact reconstruction \citep{Candes10}. The random measurement setting we use in the experiments can be described as follows: We take measurements of matrix completion type, sampling $m=\lfloor\rho d_f\rfloor$ entries of $X_0$ uniformly over its $d_1 d_2$ indices to obtain $Y=\Phi(X_0)$. Here, $\rho$ is such that $\frac{d_1d_2}{d_f}\geq\rho\geq1$ and parametrizes the difficulty of the reconstruction problem, from very hard problems for $\rho \approx 1$ to easier problems for larger $\rho$.

However, this uniform sampling of $\Phi$ could yield instances of measurement operators whose information content is not large enough to ensure well-posedness of the corresponding low-rank matrix recovery problem, even if $\rho > 1$. 
More precisely, it is impossible to recover a matrix exactly if the number of revealed entries in any row or column is smaller than its rank $r$, which is explained and shown in the context of the proof of \citep[Theorem 1]{Pimentel15}. 

Thus, in order to provide for a sensible measurement model for small $\rho$, we exclude operators $\Phi$ that sample fewer than $r$ entries in any row or column. 
Therefore, we adapt the uniform sampling model such that operators $\Phi$ are discarded and sampled again until the requirement of at least $r$ entries per column and row is met and recovery can be achieved from a theoretical point of view.

We note that the described phenomenon is very related to the fact that matrix completion recovery guarantees for the uniform sampling model require at least one additional $\log$ factor, i.e., they require at least $m \geq \log(\max(d_1,d_2)) d_f$ sampled entries \citep[Section V]{Davenport16}.

While we detail the experiments for the matrix completion measurement setting just described in the remaining section, we add that Gaussian measurement models also lead to very similar results in experiments.
\subsection{Convergence rate comparison with other IRLS algorithms} \label{sec_num_ratecomparisons}
In this subsection, we vary the Schatten-$p$ parameter between $0$ and $1$ and compare the corresponding convergence behavior of \texttt{HM-IRLS} with the IRLS variant \texttt{IRLS-col}, which performs the reweighting just in the column space, and with the arithmetic mean variant \texttt{AM-IRLS}. The latter two coincide with \cref{algo1} except that the weight matrices are chosen as described in \Cref{eq_IRLS_general_QF} in \cref{sec_HM_IRLS}.

We note that \texttt{IRLS-col} is very similar to the IRLS algorithms of \citep{Fornasier11} and \citep{Mohan10} and differs from them basically just in the choice of the $\epsilon$-smoothing. We present the experiments with \texttt{IRLS-col} to isolate the influence of the weight matrix type, but very similar results can be observed for the algorithms of \citep{Fornasier11} and \citep{Mohan10}.\footnote{Implementations of the mentioned authors' algorithms were downloaded from 
\url{https://faculty.washington.edu/mfazel/} and \url{https://github.com/rward314/IRLSM}, respectively.}

In the matrix completion setup of \cref{sec_measurement_setting}, we choose $d_1=d_2=40$, $r=10$ and distinguish easy, hard and very hard problems corresponding to oversampling factors $\rho$ of $2.0$, $1.2$ and $1.0$, respectively. The algorithms are provided with the ground truth rank $r$ and are stopped whenever the relative change of Frobenius norm $\|X^{(n)}-X^{(n-1)}\|_F/\|X^{(n-1)}\|_F$ drops below the threshold of $10^{-10}$ or a maximal iteration of iterations $n_{\max}$ is reached.

\begin{figure}[t!]
\includegraphics[width=1\textwidth]{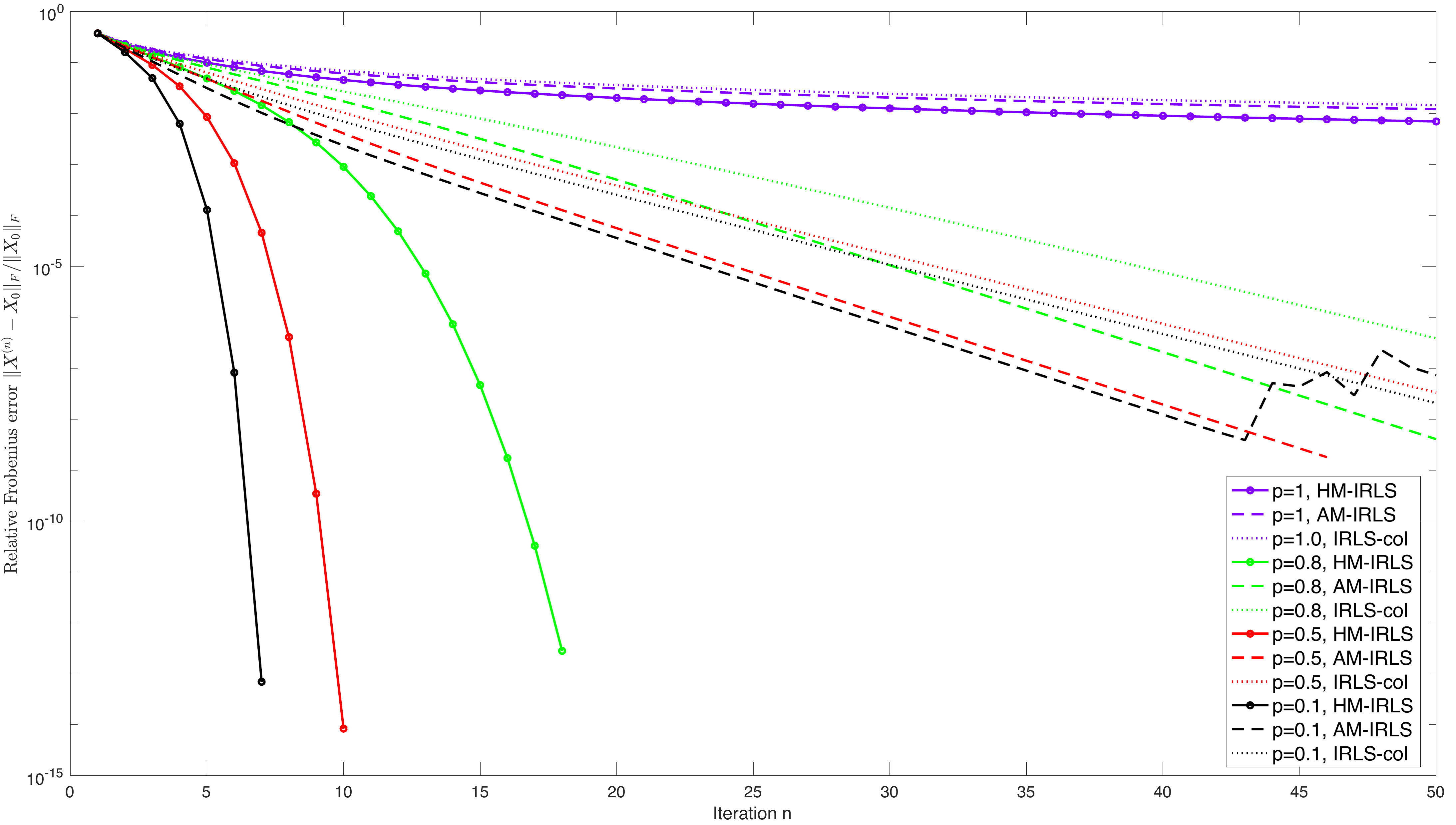}
\vspace*{-8mm}
\caption{Relative Frobenius errors as a function of the iteration $n$ for oversampling factor $\rho=2$ (easy problem).} \label{ratesrho2}
\vspace*{-2mm}
\end{figure}

\subsubsection{Convergence rates}
First, we study the behavior of the three IRLS algorithms for the easy setting of an oversampling factor of $\rho =  2$, which means that $\frac{2 r (d_1 +d_2 -r)}{d_1 d_2} = 0.875$ of the entries are sampled, and parameters $p \in \{0.1,0.5,0.8,1\}$. 

In \Cref{ratesrho2}, we observe that for $p=1$, \texttt{HM-IRLS}, \texttt{AM-IRLS} and \texttt{IRLS-col} have a quite similar behavior, as the relative Frobenius errors $\|X^{(n)}-X_0\|_F /\|X_0\|_F$ decrease only slowly, i.e., even a linear rate is hardly identifiable. For choices $p < 1$ that correspond to non-convex objectives, we observe a very fast, superlinear convergence of \texttt{HM-IRLS}, as the iterates $X^{(n)}$ converge up to a relative error of less than $10^{-12}$ within fewer than $20$ iterations for $p \in \{0.8,0.5,0.1\}$. Precise calculations verify that the rate of convergences are indeed of order $2-p$, the order predicted by \Cref{rate}. We note that this fast convergence rate kicks in not only locally, but starting from the very first iteration.

On the other hand, it is easy to see that \texttt{AM-IRLS} and \texttt{IRLS-col} converge \emph{linearly, but not superlinearly} to the ground truth $X_0$ for $p \in \{0.8,0.5,0.1\}$. The linear rate of \texttt{AM-IRLS} is slightly better than the one of \texttt{IRLS-col}, but the numerical stability of \texttt{AM-IRLS} deteriorates for $p=0.1$ close to the ground truth (after iteration 43). This is due to a bad conditioning of the quadratic problems as the $X^{(n)}$ are close to rank-$r$ matrices. In contrast, no numerical instability issues can be observed for \texttt{HM-IRLS}.

\begin{figure}[t!]
\includegraphics[width=1.00\textwidth]{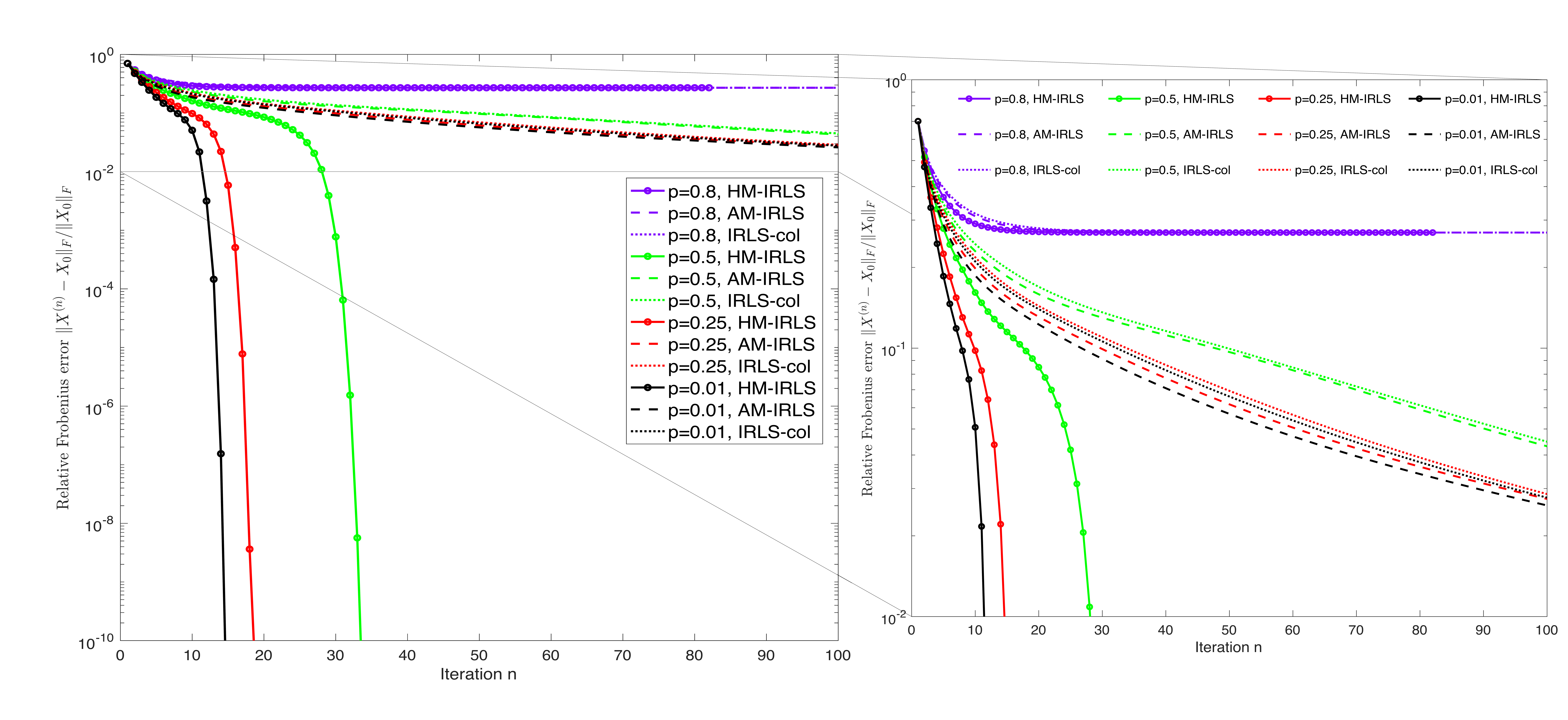}
\vspace*{-8mm}
 \caption{Relative Frobenius errors as a function of the iteration $n$ for oversampling factor $\rho=1.2$ (hard problem). Left column: $y$-range $[10^{-10};10^{0}]$. Right column: Enlarged section of left column corresponding to $y$-range of $[10^{-2};10^{0}]$.} \label{ratesrho12} 
\end{figure}

For the hard matrix completion problems with oversampling factor of $\rho = 1.2$, we observe that for $p=0.8$, the three algorithms typically do not converge to ground truth. This can be seen in the example that is shown in \Cref{ratesrho12}, where \texttt{HM-IRLS}, \texttt{AM-IRLS} and \texttt{IRLS-col} all exhibit a relative error of $0.27$ after $100$ iterations. We do not visualize the result for $p=1$, as the iterates of the three algorithms do not converge to the ground truth either, which is to be expected: In some sense, they implement nuclear norm minimization, which is typically not able to recover a low-rank matrix from measurements with an oversampling factor as small as $\rho = 1.2$ \citep{Donoho13}. The dramatically different behavior between \texttt{HM-IRLS} and the other approaches becomes very apparent for more non-convex choices of $p \in \{0.01,0.25,0.5\}$, where the former converges up to a relative Frobenius error of less than $10^{-10}$ within 15 to 35 iterations, while the others do not reach a relative error of $10^{-2}$ even after $100$ iterations. For \texttt{HM-IRLS}, the convergence of order $2-p$ can be very well locally observed also here, it just takes some iterations until the superlinear convergence begins, which is due to the increased difficulty of the recovery problem.

Finally, we see in the example shown in \Cref{ratesrho1} that even for the very hard problems where $\rho = 1$, which means that the number of sampled entries corresponds exactly to the degrees of freedom $r (d_1 +d_2 -r)$, \texttt{HM-IRLS} can be successful to recover the rank-$r$ matrix if the parameter $p$ is chosen small enough (here: $p \leq 0.25$). This is not the case for the algorithms \texttt{AM-IRLS} and \texttt{IRLS-col}. 

\begin{figure}[t!]
\includegraphics[width=1.00\textwidth]{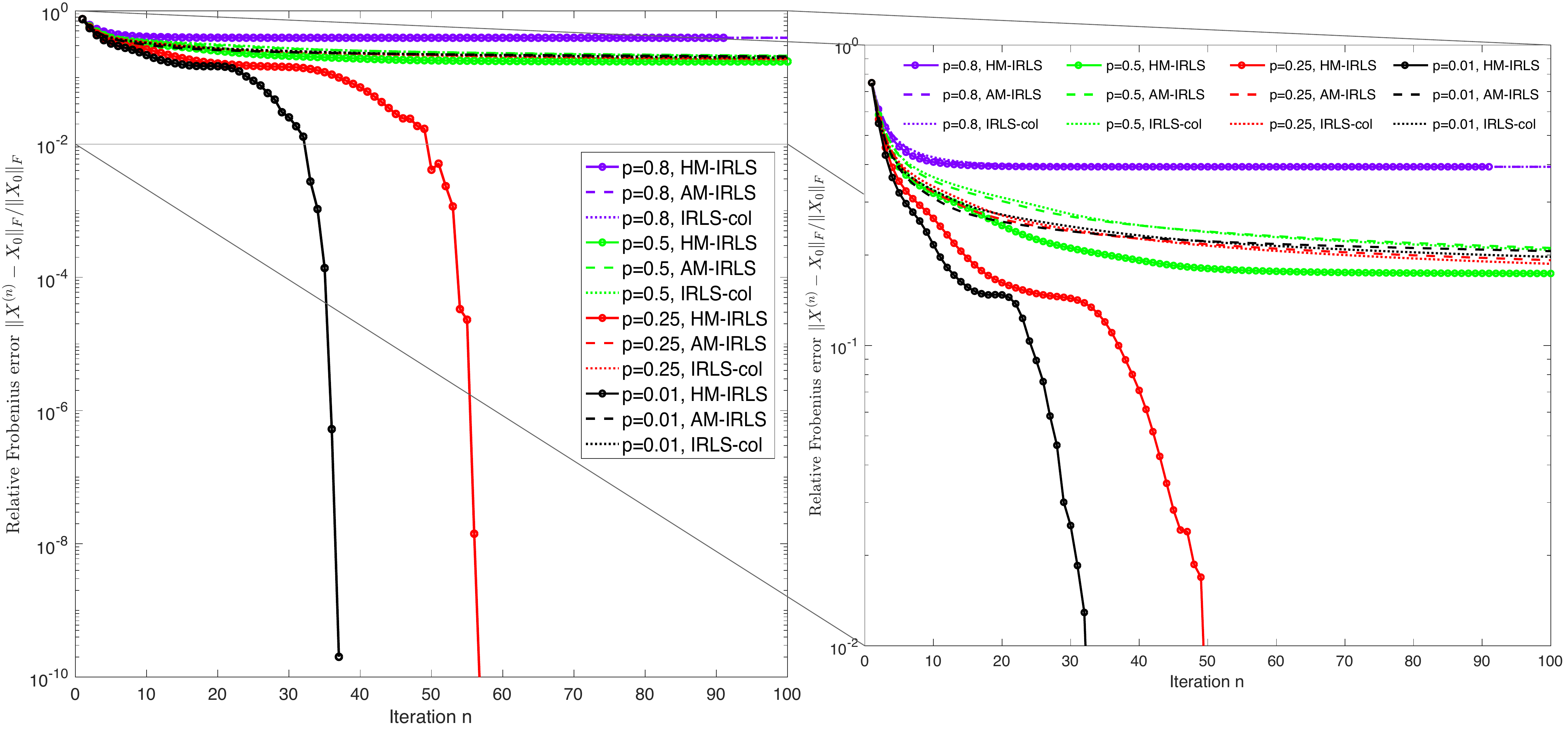}
\vspace*{-8mm}
 \caption{Relative Frobenius errors as a function of the iteration $n$ for oversampling factor $\rho=1.0$ (very hard problem). Left column: $y$-range $[10^{-10};10^{0}]$. Right column: Enlarged section of left column corresponding to $y$-range of $[10^{-2};10^{0}]$.}  \label{ratesrho1}
\end{figure}

\subsubsection{\texttt{HM-IRLS} as the best extension of IRLS for sparse recovery}
We summarize that among the three variants \texttt{HM-IRLS}, \texttt{AM-IRLS} and \texttt{IRLS-col}, only \texttt{HM-IRLS} is able to solve the low-rank matrix recovery problem for very low sample complexities corresponding to $\rho \approx 1$. Furthermore, it is the only IRLS algorithm for low-rank matrix recovery that exhibits a superlinear rate of convergence at all.

It is worthwhile to compare the properties of \texttt{HM-IRLS} with the behavior of the IRLS algorithm of \citep{Daubechies10} designed to solve the sparse vector recovery problem by mimicking $\ell_p$-minimization for $0 < p \leq 1$. While neither \texttt{IRLS-col} nor \texttt{AM-IRLS} are able to generalize the superlinear convergence behavior of \citep{Daubechies10} (which is illustrated in Figure 8.3 of the same paper) to the low-rank matrix recovery problem, \texttt{HM-IRLS} is, as can be seen in \Cref{ratesrho2,ratesrho12,ratesrho1}.

Taking the theoretical guarantees as well as the numerical evidence into account, we claim that \emph{\texttt{HM-IRLS} is the presently best extension of IRLS for vector recovery \citep{Daubechies10} to the low-rank matrix recovery setting}, providing a substantial improvement over the reweighting strategies of \citep{Fornasier11} and \citep{Mohan10}.


Moreover, we mention two observations which suggest that \texttt{HM-IRLS} has in some sense even more favorable properties than the algorithm of \citep{Daubechies10}: First, the discussion of \citep[Section 8]{Daubechies10} states that a superlinear convergence can only be observed locally after a considerable amount of iterations with just a linear error decay. In contrast to that, \texttt{HM-IRLS} exhibits a superlinear error decay quite early (i.e., for example as early as after two iterations), at least if the sample complexity is large enough, cf. \Cref{ratesrho2}.

Secondly, it can be observed that the convergence of the algorithm of \citep{Daubechies10} to a sparse vector often breaks down if $p$ is smaller than $0.5$ \citep[Section 8]{Daubechies10}. In contrast to that, we observe that \texttt{HM-IRLS} does not suffer from this loss of global convergence for $p \ll 0.5$. Thus, a choice of very small parameters $p \approx 0.1$ or smaller is suggested as such a choice is accompanied by a very fast convergence.


\subsection{Recovery performance compared to state-of-the-art algorithms} \label{sec_comp_algs}
After comparing the performance of \texttt{HM-IRLS} with other IRLS variants, we now conduct experiments to compare the empirical performance of \texttt{HM-IRLS} also to that of low-rank matrix recovery algorithms different from IRLS. 

To obtain a comprehensive picture, we consider not only the IRLS variants \texttt{AM-IRLS} and \texttt{IRLS-col}, but a variety of state-of-the-art methods in the experiments, 
as Riemannian optimization technique \texttt{Riemann\_Opt} 
\citep{Vandereycken13},
the alternating minimization approaches \texttt{AltMin} \citep{HaldarH09},
\texttt{ASD} \citep{TannerWei16} and \texttt{BFGD} \citep{Park16}, and finally the algorithms  
\texttt{Matrix ALPS II} \citep{KyrillidisC14} and \texttt{CGIHT\_Matrix} \citep{BlanchardTW15}, which are based on iterative hard thresholding. As the IRLS variants we consider, all these algorthms use knowledge about the true ground truth rank $r$.

In the experiments, we examine the empirical recovery probabilities of the different algorithms systematically for varying oversampling factors $\rho$, determining the difficulty of the low-rank recovery problem as the sample complexity fulfills $m = \lfloor\rho  d_f\rfloor$. We recall that a large parameter $\rho$ corresponds to an easy reconstruction problem, while a small $\rho$, e.g., $\rho \approx 1$, defines a very hard problem.

We choose $d_1=d_2=100$ and the $r=8$ as parameter of the experimental setting, conducting the experiments to recover rank-$8$ matrices $X_0 \in \R^{100 \times 100}$. We remain in the matrix completion measurement setting described in \Cref{sec_measurement_setting}, but sample now $150$ random instances of $X_0$ and $\Phi$ for different numbers of measurements varying between $m_{\min}=1500$ to $m_{\max}=4000$. This means that the oversampling factor $\rho$ increases from $\rho_{\min} = 0.975$ to $\rho_{\max} = 2.60$. For each algorithm, a successful recovery of $X_0$ is defined as a relative Frobenius error $\|X^{\text{out}} - X_0\|_F / \|X_0\|_F$ of the matrix $X^{\text{out}}$ returned by the algorithm of smaller than $10^{-3}$. The algorithms are run until stagnation of the iterates or until the maximal number of iterations $n_{\max}=3000$ is reached. The number $n_{\max}$ is chosen large enough to ensure that a recovery failure is not due to a lack of iterations.


In the experiments, except for \texttt{AltMin}, for which we used our own implementation, we used implementations provided by the authors of the corresponding papers for the respective algorithms, using default input parameters provided by the authors. The respective code sources can be found in the references.



\begin{figure}[t]
    \centering
    \makebox[\linewidth][c]{
          \includegraphics[width=1.0\textwidth]{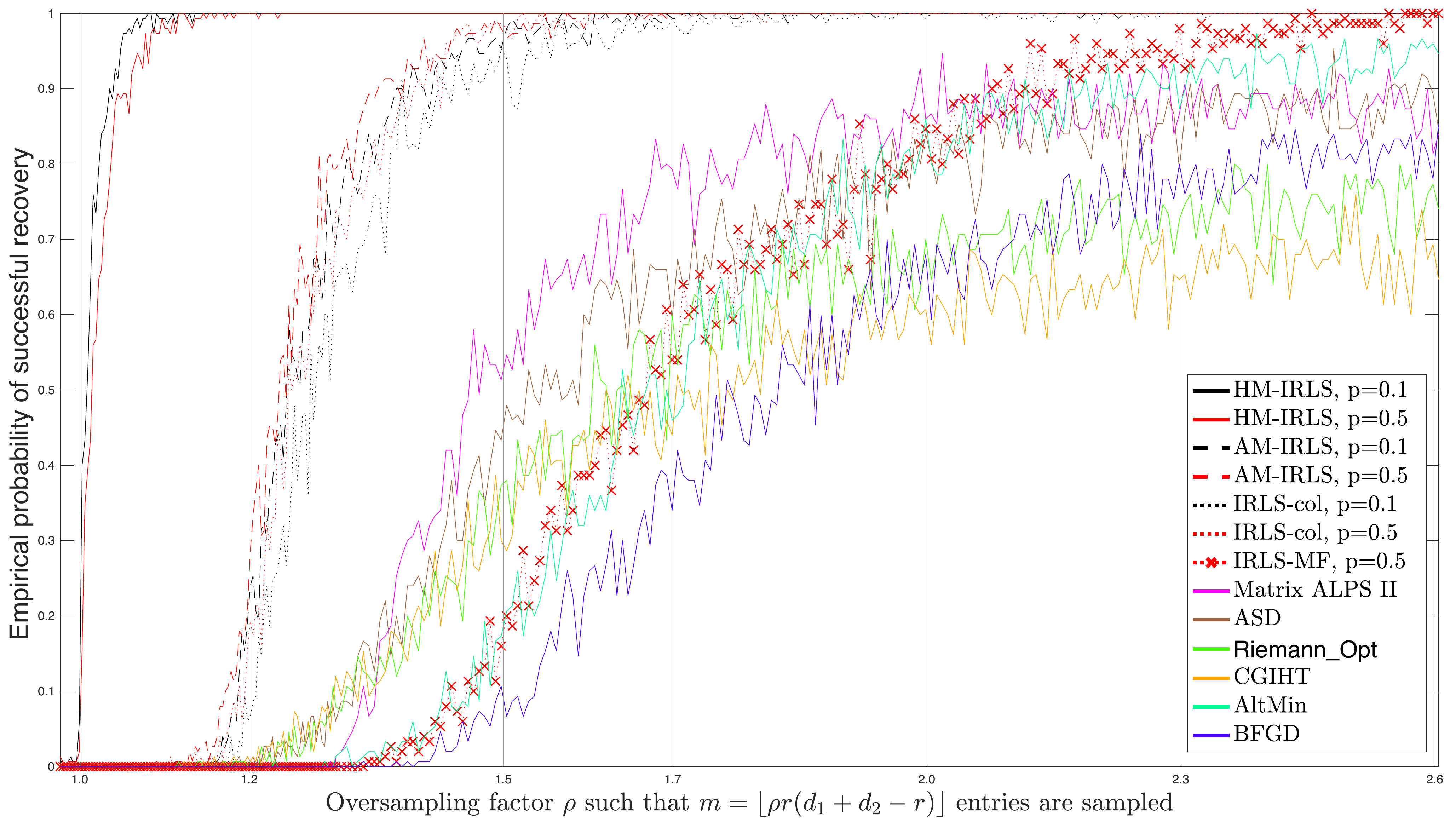}}
         \vspace*{-8mm}
       \caption{Comparison of empirical success rates of state-of-the-art algorithms, as a function of the oversampling factor $\rho$}
       \vspace*{-1mm}
        \label{avg_plot}     
    \end{figure}
    
\subsubsection{Beyond the state-of-the-art performance of \texttt{HM-IRLS}}
The results of the experiment can be seen in \Cref{avg_plot}. We observe that \texttt{HM-IRLS} exhibits a very high empirical recovery probability for $p=0.1$ and $p=0.5$ as soon as the sample complexity parameter $\rho$ is slightly larger than $1.0$, which means that $m = \lfloor \rho r(d_1+d_2-r) \rfloor$ measurements suffice to recover $(d_1 \times d_2)$-dimensional rank-$r$ matrices with $\rho$ close to $1$. This is very close to the information theoretical lower bound of $d_f = r(d_1+d_2-r)$. Very interestingly, it can be observed that the empirical recovery percentage reaches almost $100\%$ already for an oversampling factor of $\rho \approx 1.1$, and remains at exactly $100\%$ starting from $\rho \approx 1.2$.

Quite good success rates can also be observed for the algorithms \texttt{AM-IRLS} and \texttt{IRLS-col} for non-convex parameter choices $p \in \{0.1,0.5\}$, reaching an empirical success probability of almost $100\%$ at around $\rho = 1.5$. \texttt{AM-IRLS} performs only marginally better than the classical IRLS strategy \texttt{IRLS-col}, which are both outperformed considerably by \texttt{HM-IRLS}. It is important to note that in accordance to what was observed in \cref{sec_num_ratecomparisons}, in the successful instances, the error threshold that defines successful recovery is achieved already after a few dozen iterations for \texttt{HM-IRLS}, while typically only after several or many hundreds for \texttt{AM-IRLS} and \texttt{IRLS-col}. Furthermore, it is interesting to observe that the algorithm \texttt{IRLS-MF}, which corresponds to the variant studied and implemented by \citep{Mohan10} and differs from \texttt{IRLS-col} mainly only in the choice of the $\epsilon$-smoothing \cref{epsmin}, has a considerably worse performance than the other IRLS methods. This is plausible since the smoothing influences severely the optimization landscape of the objective to be minimized.

The strong performance of \texttt{HM-IRLS} is in stark contrast to the behavior of all the algorithms that are based on different approaches than IRLS and that we considered in our experiments. They basically never recover any rank-$r$ matrix if $\rho < 1.2$, and most of the algorithms need a sample complexity parameter of $\rho > 1.7$ to exceed a empirical recovery probability of a mere $50 \%$. A success rate of close to $80 \%$ is reached not before raising $\rho$ above $2.0$ in our experimental setting, and also only for a subset of the comparison algorithms, in particular for \texttt{Matrix ALPS II, ASD, AltMin}. The empirical probability of $100\%$ is only reached for some of the IRLS methods, and not for any competing method in our experimental setting, even for quite large oversampling factors such as $\rho = 2.5$. While we do not rule out that a possible parameter tuning could improve the performance of any of the algorithms slightly, we conclude that for hard matrix completion problems, the experimental evidence for the vast differences in the recovery performance of \texttt{HM-IRLS} compared to other methods is very apparent.

Thus, our observation is that the proposed \texttt{HM-IRLS} algorithm \emph{recovers low-rank matrices systematically with nearly the optimal number of measurements and needs fewer measurements than all the state-of-the-art algorithms we included in our experiments}, if the non-convexity parameter $p$ is chosen such that $p \ll 1$.

We also note that the very sharp phase transition between failure and success that can be observed in \Cref{avg_plot} for \texttt{HM-IRLS} indicates that the sample complexity parameter $\rho$ is indeed the major variable determining the success of \texttt{HM-IRLS}. In contrast, the wider phase transitions for the other algorithms suggest that they might depend more on other factors, as the realizations of the random sampling model and the interplay of measurement operator $\Phi$ and ground truth matrix $X_0$.


{Another conclusion that can be drawn from the empirical recovery probability of $1$ is that, despite the severe non-convexity of the underlying Schatten-$p$ quasinorm for, e.g.,  $p=0.1$, \texttt{HM-IRLS} with the initialization of $X^{(1)}$ as the Frobenius norm minimizer does not get stuck in stationary points if the oversampling factor is large enough. Further experiments conducted with random initializations as well as severely adversary initializations, e.g., with starting points chosen in the orthogonal complement of the spaces spanned by the singular vectors of the ground truth matrix $X_0$, lead to comparable results.
Therefore, we claim that \texttt{HM-IRLS} exhibits a global convergence behavior in interesting application cases and for oversampling factor ranges for which competing non-convex low-rank matrix recovery algorithms fail to succeed. We consider a theoretical investigation of such behavior as an interesting open problem to explore. 

\subsection{Computational complexity} \label{section_computationalcomplexity}
While the harmonic mean weight matrix $\widetilde{W}^{(n)}$, cf. \cref{def_Wn_tilde}, is an inverse of a $(d_1 d_2 \times d_1 d_2)$-matrix and therefore in general a dense $(d_1 d_2 \times d_1 d_2)$-matrix, it is important to note that it never has to be computed explicitly in an implementation of \texttt{HM-IRLS}; neither is it necessary to compute its inverse $(\widetilde{W}^{(n)})^{-1} = \frac{1}{2} \left(U^{(n)}(\overline{\Sigma}^{(n)})^{2-p}U^{(n)*} \oplus V^{(n)}(\overline{\Sigma}^{(n)})^{2-p}V^{(n)*}\right)$ explicitly. 

Indeed, as it can be seen in \cref{xmin} and by the definition of the Kronecker sum \cref{def_kronsum}, the harmonic mean weight matrix appears just as the linear operator $(\mathcal{W}^{(n)})^{-1}$ on the space of matrices $M_{d_1 \times d_2}$, whose action consists of a left- and right-sided matrix multiplication, cf. \cref{left_mult_W}. Therefore, the application of $(\mathcal{W}^{(n)})^{-1}$ is $O(d_1 d_2 (d_1+d_2))$ by the naive matrix multiplication algorithm, and can be easily parallelized.

While this useful observation is helpful for the implementation of \texttt{HM-IRLS}, it is not true for \texttt{AM-IRLS}, as the action of $(W_{(\text{arith})}^{(n)})^{-1}$, the inverse of the arithmetic mean weight matrix at iteration $n$, is not representable as a sum of left- and right-sided matrix multiplication. This means that even the execution of a fixed number of iterations of \texttt{HM-IRLS} is faster than computational advantage over \texttt{AM-IRLS}.

The cost to compute $\Phi \circ \widetilde{\mathcal{W}}^{(n)-1}\circ \Phi^* \in M_{m \times m}$ depends on the linear measurement operator $\Phi$. In the matrix completion setting \cref{def_MC_setting}, no additional arithmetic operations have to be performed, as $\Phi$ is a just a selection operator in this case, and for \texttt{HM-IRLS}, this means that $\Phi \circ \widetilde{\mathcal{W}}^{(n)-1}\circ \Phi^*$ is a sparse matrix.

Thus, the algorithm \texttt{HM-IRLS} consists of basically of two computational steps per iteration: The computation of the SVD of the $d_1\times d_2$-matrix $X^{(n)}$ and the solution of the linearly constrained least squares problem in \cref{xmin}. The first is of time complexity $O(d_1d_2 \min(d_1,d_2))$. The time complexity of the second depends on $\Phi$, but is dominated by the inversion of a symmetric, $m \times m$ sparse linear system in the matrix completion setting, if $m$ is the number of given entries. This has a worst case time complexity of $O(\max(d_1,d_2)^3 r^3)$ if $\rho$ is just a constant oversampling factor.

For the matrix completion case, this allows us to recover low-rank matrices up to, e.g., $d_1 =d_2 =3000$ on a single machine given very few entries with \texttt{HM-IRLS}.

\subsubsection*{Acceleration possibilities and extensions}

To tackle higher dimensionalities in reasonable runtimes, a key strategy could be to address the computational bottleneck of \texttt{HM-IRLS}, the solution of the $m \times m$ linear system in \cref{xmin}, by using iterative methods. For IRLS algorithms designed for the related sparse recovery problem, the usage of conjugate gradient (CG) methods is discussed in \citep{Fornasier16}. By coupling the accuracy of the CG solutions to the outer IRLS iteration and using appropriate preconditioning, the authors obtain a competitive solver for the sparse recovery problem, also providing a convergence analysis. Similar ideas could be used for an acceleration of \texttt{HM-IRLS}.


It is interesting to see if further computational improvements can be achieved by combining the ideas of \texttt{HM-IRLS} with the usage of truncated and randomized SVDs \citep{Halko11}, replacing the full SVDs of the $X^{(n)}$ that are needed to define the linear operator $(\mathcal{W}^{(n)})^{-1}$ in \cref{algo1}. 

\section{Theoretical analysis} \label{sec_theoretical_analysis}

For the theoretical analysis of \texttt{HM-IRLS}, we introduce the following auxiliary functional $\mathcal{J}_p$, leading to a variational interpretation of the algorithm. In the whole section, we denote $d=\min(d_1,d_2)$ and $D=\max(d_1,d_2)$.

\begin{definition}\label{J}
Let  $0< p \leq 1$. Given a full rank matrix $Z \in M_{d_1 \times d_2}$, let 
 \begin{equation*}
\widetilde{W}(Z):= 2\big[\mathbf{I}_{d_2} \otimes (ZZ^*)^{\frac{1}{2}}\big]\left[(ZZ^*)^{\frac{1}{2}} \oplus (Z^*Z)^{\frac{1}{2}}\right]^{-1}\big[(Z^*Z)^{\frac{1}{2}} \otimes \mathbf{I}_{d_1}\big] \in H_{d_1 d_2 \times d_1 d_2}
\end{equation*}
be the \emph{harmonic mean matrix $\widetilde{W}$ associated to $Z$}.

We define the \emph{auxiliary functional}  $\mathcal{J}_p: M_{d_1 \times d_2} \times \R_{\geq 0} \times M_{d_1 \times d_2}  \rightarrow \R_{\geq 0}$ as 
\begin{equation*}
\resizebox{.97 \textwidth}{!} 
{$\mathcal{J}_p(X,\epsilon,{Z}):= 
\begin{cases}
\frac{p}{2} \|X_{\vecc}\|^2_{\ell_2(\widetilde{W}(Z))}+ \frac{\epsilon^2 p}{2}\sum\limits^d_{i=1}\sigma_i(Z)+\frac{2-p}{2}\sum\limits^d_{i=1}\sigma_i(Z)^{\frac{p}{(p-2)}}& \text{ if }\rank(Z)=d, \\
 +\infty &  \text{ if }\rank(Z) < d.
\end{cases}$}
 \end{equation*}
\end{definition}

We note that the matrix $\widetilde{W}$ of \cref{J} is just the harmonic mean of the matrices $\widetilde{W}_1:=\mathbf{I}_{d_2} \otimes (ZZ^*)^{\frac{1}{2}}$ and $\widetilde{W}_2=(Z^*Z)^{\frac{1}{2}} \otimes \mathbf{I}_{d_1}$, as introduced in \cref{sec_avg_weightmatrices}, if $(ZZ^*)^{\frac{1}{2}}$ \emph{and} $(Z^*Z)^{\frac{1}{2}}$ are positive definite. Indeed, in this case, $(ZZ^*)^{\frac{1}{2}} \oplus (Z^*Z)^{\frac{1}{2}} = \widetilde{W}_1 + \widetilde{W}_2$ is invertible and as $(A^{-1}+B^{-1})^{-1}= A(A+B)^{-1}B$ for any positive definite matrices $A$ and $B$ of the same dimensions,
\begin{equation} \label{eq_Wtilde_W1_W2}
\widetilde{W}(Z) = 2 \widetilde{W}_1\big(\widetilde{W}_1 + \widetilde{W}_2\big)^{-1} \widetilde{W}_2 =2(\widetilde{W}_1^{-1}+\widetilde{W}_2^{-1})^{-1}.
\end{equation}
{
We use the more general definition $\widetilde{W}(Z)$ as it is well-defined for any full-rank $Z \in M_{d_1 \times d_2}$ and as it allows to handle the case of non-square matrices, i.e., the case $d_1 {\neq} d_2$, as in this case $(ZZ^*)^{\frac{1}{2}}$ or $(Z^*Z)^{\frac{1}{2}}$ 
has to be singular. Using the Moore-Penrose pseudo inverse $\widetilde{W}_1^{+}$ and $\widetilde{W}_2^{+}$ of the matrices $\widetilde{W}_1$ and $\widetilde{W}_2$, we can rewrite $\widetilde{W}(Z)$ from \cref{J} as 
\begin{equation*}
\widetilde{W}(Z) = 2 \widetilde{W}_1\big(\widetilde{W}_1 + \widetilde{W}_2\big)^{-1} \widetilde{W}_2 =2(\widetilde{W}_1^{+}+\widetilde{W}_2^{+})^{-1}.
\end{equation*}}

With the auxiliary functional $\mathcal{J}_p$ at hand, we can interpret \cref{algo1} as an alternating minimization of the functional $\mathcal{J}_p(X,\epsilon,Z)$ with respect to its arguments $X$, $\epsilon$ and $Z$.

In the following, we derive the formula \cref{def_Wn_tilde} for the weight matrix $\widetilde{W}^{(n+1)}$ as the evaluation $\widetilde{W}^{(n+1)}=\widetilde{W}\big(Z^{(n+1)}\big)$ of $\widetilde{W}$ from \cref{J} at the minimizer
\begin{equation}\label{minW}
Z^{(n+1)} = \argmin_{Z \in M_{d_1 \times d_2}}\mathcal{J}_p(X^{(n+1)},\epsilon^{(n+1)},Z),
\end{equation}
with the minimizer being unique. Similarly, the formula \cref{xmin} can be interpreted as
\begin{equation} \label{minX}
X^{(n+1)}  =\argmin\limits_{\substack{X\in M_{d_1 \times d_2}\\ \Phi(X)=Y}}\,\, \|X_{\vecc}\|^2_{\ell_2(\widetilde{W}(Z^{(n)}))}=\argmin\limits_{\substack{X\in M_{d_1 \times d_2}\\ \Phi(X)=Y}}\,\,\mathcal{J}_p(X,\epsilon^{(n)},Z^{(n)})
\end{equation}

These observations constitute the starting point of the convergence analysis of \cref{algo1}, which is detailed subsequently after the verification of the optimization steps.

\subsection{Optimization of \texorpdfstring{$\mathcal{J}_p$}{Jp} with respect to \texorpdfstring{${Z}$}{Z}  and  \texorpdfstring{${X}$}{X}}
We fix $X \in M_{d_1 \times d_2}$ with singular value decomposition $X=\sum^{d}_{i=1}\sigma_i u_iv_i^*$, where $u_i \in \C^{d_1}$ $v_i \in \C^{d_2}$ are the left and right singular vectors respectively and $\sigma_i = \sigma_i(X)$ denote its singular values for $i \in[d]$.

Our objective in the following is the justification of formula \cref{def_Wn_tilde}. To yield the building blocks of the weight matrix $\widetilde{W}^{(n+1)}$,  we consider the minimization problem 
\begin{equation} \label{FpWmin}
\argmin_{Z \in M_{d_1 \times d_2}}\mathcal{J}_p(X,\epsilon,Z)
\end{equation}
for $\epsilon > 0$.

\begin{lemma}\label{Wsubs}
The unique minimizer of \cref{FpWmin} is given by
 \begin{equation*} %
{Z}_{\opt}=\sum^d_{i=1} ({\sigma}_i(X)^2+\epsilon^2)^{\frac{p-2}{2}} u_iv_i^*.
\end{equation*}
Furthermore, the value of $\mathcal{J}_p$ at the minimizer $Z_{\opt}$ is
\begin{equation} \label{Jf}
\begin{split}
\mathcal{J}_p(X,\epsilon,Z_{\opt})= \sum_{i=1}^{d} ({\sigma}_i (X)^2 + \epsilon^2)^{\frac{p}{2}} =:   {g_{\epsilon}^p(X)}
\end{split} 
\end{equation}
for $p > 0$.
\end{lemma}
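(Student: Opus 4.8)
The plan is to reduce the minimization to the singular values of $Z$, diagonalizing the weight operator, and then to optimize the singular vectors and the singular values in turn. Writing the SVD $Z=\sum_{k}\tau_k w_k z_k^*$ with $\tau_k=\sigma_k(Z)>0$, the two building blocks $\widetilde W_1=\mathbf I_{d_2}\otimes(ZZ^*)^{1/2}$ and $\widetilde W_2=(Z^*Z)^{1/2}\otimes\mathbf I_{d_1}$ are simultaneously diagonalized in the orthonormal basis $\{w_j z_i^*\}$ of $M_{d_1\times d_2}$ (equivalently $\{z_i\otimes w_j\}$ after vectorization), on which they act as multiplication by $\tau_j$ and $\tau_i$ respectively. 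Hence $\widetilde W(Z)=2(\widetilde W_1^{-1}+\widetilde W_2^{-1})^{-1}$ acts as multiplication by the harmonic mean $\tfrac{2\tau_i\tau_j}{\tau_i+\tau_j}$, exactly as in the computation behind \Cref{harmonic} and \eqref{eq_IRLS_general_QF}. Expanding $X$ in this basis gives
\begin{equation*}
\mathcal J_p(X,\epsilon,Z)=\frac{p}{2}\sum_{i,j}\frac{2\tau_i\tau_j}{\tau_i+\tau_j}\,\big|\langle w_j,Xz_i\rangle\big|^2+\frac{\epsilon^2p}{2}\sum_{i=1}^d\tau_i+\frac{2-p}{2}\sum_{i=1}^d\tau_i^{\frac{p}{p-2}},
\end{equation*}
so that the two penalty terms depend only on the singular values $\tau_i$, while the first term couples the singular vectors of $Z$ with those of $X$.

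The decisive step is to show that the minimizer \emph{aligns} the singular vectors of $Z$ with those of $X$, i.e.\ $w_j=u_j$ and $z_i=v_i$. Once this is granted, $\langle u_j,Xv_i\rangle=\sigma_i(X)\delta_{ij}$ annihilates every off-diagonal contribution and the functional decouples into the $d$ independent scalar problems
\begin{equation*}
f_i(\tau_i)=\frac{p}{2}\,\tau_i\big(\sigma_i(X)^2+\epsilon^2\big)+\frac{2-p}{2}\,\tau_i^{\frac{p}{p-2}},\qquad \tau_i>0 .
\end{equation*}
Each $f_i$ is the sum of a linear term and a strictly convex negative-power term (the exponent $\tfrac{p}{p-2}$ lies in $[-1,0)$), hence is strictly convex; the weighted arithmetic--geometric mean inequality with weights $\tfrac p2,\tfrac{2-p}2$ (which sum to $1$) yields $f_i(\tau_i)\ge(\sigma_i(X)^2+\epsilon^2)^{p/2}$ with equality \emph{iff} $\tau_i=(\sigma_i(X)^2+\epsilon^2)^{(p-2)/2}$. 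Since $\epsilon>0$ forces all these values to be strictly positive, the resulting $Z$ is of full rank, lies in the finite-valued region of $\mathcal J_p$, and equals the claimed $Z_{\opt}=\sum_i(\sigma_i(X)^2+\epsilon^2)^{(p-2)/2}u_iv_i^*$; summing the per-coordinate values reproduces $g_\epsilon^p(X)$ of \eqref{Jf}. Uniqueness follows from the strictness of the AM--GM inequality together with the strictness of the alignment step (and is unaffected by repeated singular values of $X$, since the spectral coefficients are then constant on the corresponding eigenspaces).

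The main obstacle is precisely this alignment step, which is genuinely harder here than for one-sided reweighting: for \texttt{IRLS-col}/\texttt{IRLS-row} the right (resp.\ left) singular vectors drop out entirely, leaving a single Hermitian expression $\tfrac{p}{2}\trace\big(f(ZZ^*)\,XX^*\big)$ that von Neumann's trace inequality aligns immediately. With the harmonic-mean weights the coefficient $\tfrac{2\tau_i\tau_j}{\tau_i+\tau_j}$ is \emph{not} a product, so von Neumann does not apply directly, and for a \emph{fixed} ordering of the $\tau_i$ the optimal rotation need not even be the aligned one (whether alignment or an off-diagonal configuration is preferred depends on how $\tau$ and $\sigma(X)$ are sorted relative to one another). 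I would therefore treat the first term as a linear functional $\sum_{i,j}\tfrac{2\tau_i\tau_j}{\tau_i+\tau_j}|B_{ji}|^2$ of the entrywise squared moduli of $B=W^*XZ$ over the unitary orbit of $\Sigma(X)$, reduce its minimum (for fixed $\tau$) to an assignment/rearrangement problem using the majorization properties of $(|B_{ji}|^2)_{j,i}$, and then carry out the remaining joint optimization over the assignment and the $\tau_i$ to confirm that the diagonal (aligned) configuration is globally optimal—with the correct pairing forced by the monotonicity of $c\mapsto c^{(p-2)/2}$.

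Finally, I would record that the general definition of $\widetilde W(Z)$ via the pseudo-inverses $(\widetilde W_1^{+}+\widetilde W_2^{+})^{-1}$ keeps the eigenvalue computation valid in the non-square case $d_1\neq d_2$: only the $d=\min(d_1,d_2)$ common singular directions carry nonzero weight, the surplus left/right directions contribute no energy to the first term, and the entire argument then goes through verbatim on the domain of full-rank $Z$.
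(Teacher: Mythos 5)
Your reduction of the quadratic term to the coefficients $\tfrac{2\tau_i\tau_j}{\tau_i+\tau_j}$ in the basis $\{w_jz_i^*\}$ is correct, and your scalar step is also correct: the weighted AM--GM inequality with weights $\tfrac p2,\tfrac{2-p}2$ does give $\tfrac p2\tau_i(\sigma_i(X)^2+\epsilon^2)+\tfrac{2-p}2\tau_i^{\frac{p}{p-2}}\ge(\sigma_i(X)^2+\epsilon^2)^{p/2}$ with equality exactly at $\tau_i=(\sigma_i(X)^2+\epsilon^2)^{\frac{p-2}{2}}$. But the proof has a genuine gap exactly where you flag it: the alignment step is never carried out. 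Everything after ``Once this is granted'' is conditional on a claim you only describe a strategy for (``I would therefore treat the first term as \dots reduce its minimum \dots to an assignment/rearrangement problem''). That claim is not routine. Because the harmonic-mean coefficients do not factorize as $a_ib_j$, von Neumann's trace inequality is unavailable (as you note), and minimizing $\sum_{i,j}h_{ij}\lvert B_{ji}\rvert^2$ over the unitary orbit $\{B=P\Sigma(X)Q\}$ is not a plain rearrangement problem: the set of attainable matrices $(\lvert B_{ji}\rvert^2)_{j,i}$ for fixed singular values is not a polytope of doubly stochastic type on which a linear functional is minimized at permutations, so ``majorization properties of $(\lvert B_{ji}\rvert^2)$'' do not by themselves force a diagonal optimizer. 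Worse, as you concede, for \emph{fixed} $\tau$ the optimizer need not be aligned at all, so the argument cannot be split into ``optimize vectors, then optimize values''; one must control the joint problem, and your proposal gives no mechanism for doing so. Since the whole content of \cref{Wsubs} beyond the one-dimensional AM--GM computation is precisely this alignment-plus-pairing statement, the proof is incomplete at its core.

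For contrast, the paper sidesteps the orbit-minimization problem entirely. It first secures \emph{existence} of a global minimizer of $f^p_{X,\epsilon}=\mathcal{J}_p(X,\epsilon,\cdot)$ on the open set of full-rank $Z$ by the direct method (lower semicontinuity, the lower bound $f^p_{X,\epsilon}(Z)\ge\tfrac{\epsilon^2p}{2}\lVert Z\rVert_F$, hence coercivity and compact level sets), and then computes \emph{all critical points}: differentiating $Z\mapsto\mathcal{J}_p(X,\epsilon,Z)$ with matrix calculus yields a stationarity equation $A\oplus A=0$, hence $A=0$; substituting the SVD $Z=U\Sigma V^*$ shows that $B=H\circ(U^*XV)$ must be diagonal, which forces $U,V$ to match the singular vectors of $X$ up to a permutation, and the ordering conventions kill the permutation. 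Uniqueness of the critical point plus existence of a minimizer then identifies $Z_{\opt}$, and the value \cref{Jf} follows by substitution. In other words, the alignment you are missing is exactly what the first-order conditions deliver for free in the paper's route. If you want to keep your variational framing, you would need to either execute the orbit-minimization argument rigorously (e.g., via an integral representation $\tfrac{2\tau_i\tau_j}{\tau_i+\tau_j}=2\int_0^\infty e^{-t/\tau_i}e^{-t/\tau_j}\,dt$, which restores a product structure for each $t$ and could make a von Neumann--type argument applicable uniformly in $t$), or fall back on a stationarity computation as the paper does; as written, the proposal does not prove the lemma.
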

The proof of this results is detailed in the appendix.
\begin{remark}
We note that the value of $\mathcal{J}_p(X,\epsilon,Z_{\opt})$ can be interpreted as a smooth $\epsilon$-perturbation of a $p$-th power of a Schatten-$p$ quasi-norm of the matrix $X$. In fact, for $\epsilon = 0$ we have
\begin{equation*}
\mathcal{J}_p(X,0,Z_{\opt}) = \|X\|_{S_p}^p = g_{0}^p(X).
\end{equation*}
\end{remark}

Now, we show that our definition rule \cref{xmin} of $X^{(n+1)}$ in \cref{algo1} can be interpreted as a minimization of the auxiliary functional $\mathcal{J}_p$ with respect to the variable $X$. Additionally, this minimization step can be formulated as the solution of a weighted least squares problem with weight matrix $\widetilde{W}^{(n)}$. This is summarized in the following lemma.
\begin{lemma}\label{JminX_prop} Let  $0< p \leq 1$. Given a full-rank matrix $Z \in M_{d_1 \times d_2}$, let 
$\widetilde{W}(Z):= 2([(ZZ^*)^{\frac{1}{2}}]^+\oplus [(Z^*Z)^{\frac{1}{2}}]^+)^{-1} \in H_{d_1d_2 \times d_1d_2}$ be the matrix from \cref{J} and $\mathcal{W}^{-1}: M_{d_1 \times d_2} \rightarrow M_{d_1 \times d_2}$ the linear operator of its inverse 
\[
\mathcal{W}^{-1}(X):= \frac{1}{2}\left[[(ZZ^*)^{\frac{1}{2}}]^+X+X [(Z^*Z)^{\frac{1}{2}}]^+\right].
\]
Then the matrix
$$
X_{\opt} =  \big(\mathcal{W}^{-1}\circ \Phi^*\circ(\Phi \circ \mathcal{W}^{-1}\circ \Phi^*)^{-1}\big)\big(Y\big) \in M_{d_1 \times d_2}
$$ is the unique minimizer of the optimization problems
\begin{equation} \label{eq_opt_WLS}
\argmin\limits_{\Phi(X)=Y}\,\,\mathcal{J}_p(X,\epsilon,Z)=\argmin\limits_{\Phi(X)=Y}\,\, \|X_{\vecc}\|^2_{\ell_2(\widetilde{W})}.
\end{equation}
Moreover, a matrix $X_{\opt} \in M_{d_1\times d_2}$ is a minimizer of the minimization problem \cref{eq_opt_WLS} if and only if it fulfills the property
\begin{equation} \label{eq_proof_JminX_1}
\langle \widetilde{W}(Z)(X_{\opt})_{\vecc},H_{\vecc}\rangle_{\ell_2} = 0 \; \text{ for all } \; H \in \mathcal{N}(\Phi)\;\;\text{ and } \;\; \Phi(X_{\opt}) = Y. 
\end{equation}
\end{lemma}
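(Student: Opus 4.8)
The plan is to treat \cref{eq_opt_WLS} as a linearly constrained, strictly convex quadratic minimization problem and to characterize its solution by the standard first-order (normal-equation/KKT) argument. First I would settle the equality of the two \texttt{argmin} problems in \cref{eq_opt_WLS}: inspecting \cref{J}, for fixed $\epsilon$ and $Z$ the functional $\mathcal{J}_p(X,\epsilon,Z)$ depends on $X$ only through the term $\frac{p}{2}\|X_{\vecc}\|^2_{\ell_2(\widetilde{W}(Z))}$, the remaining two summands being constant in $X$. Since $p/2>0$, minimizing $\mathcal{J}_p(\cdot,\epsilon,Z)$ and minimizing $\|X_{\vecc}\|^2_{\ell_2(\widetilde{W})}$ over the affine set $\{\Phi(X)=Y\}$ have identical minimizers, which reduces the entire statement to the weighted least squares problem.

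Next I would record that $\widetilde{W}=\widetilde{W}(Z)$ is positive definite, equivalently that the operator $\mathcal{W}^{-1}$ is positive definite on $(M_{d_1\times d_2},\langle\cdot,\cdot\rangle_F)$. Writing the SVD $Z=\sum_{i}\sigma_i u_iv_i^*$, the family $(u_iv_j^*)_{i\in[d_1],\,j\in[d_2]}$ is an orthonormal basis of $M_{d_1\times d_2}$, and a direct computation shows that $\mathcal{W}^{-1}$ is diagonal in this basis with eigenvalues $\frac{1}{2}(\alpha_i+\beta_j)$, where $\alpha_i,\beta_j\geq 0$ are the eigenvalues of $[(ZZ^*)^{1/2}]^+$ and $[(Z^*Z)^{1/2}]^+$ respectively (i.e.\ reciprocals of singular values on the relevant indices and zero elsewhere). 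This is the one point that requires genuine care in the rectangular/pseudo-inverse case: because $Z$ has full rank $d=\min(d_1,d_2)$, for \emph{every} pair $(i,j)$ at least one of $\alpha_i,\beta_j$ is strictly positive, so $\alpha_i+\beta_j>0$ and hence $\mathcal{W}^{-1}$, and therefore $\widetilde{W}$, is positive definite. The same positivity also guarantees that the $m\times m$ matrix $\Phi\circ\mathcal{W}^{-1}\circ\Phi^*$ is invertible (assuming, as we may after restricting to $\Ran(\Phi)$, that $\Phi$ is surjective, so $\Phi^*$ is injective).

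With positive definiteness in hand, the characterization \cref{eq_proof_JminX_1} follows from expanding, for a feasible $X_{\opt}$ and any $H\in\mathcal{N}(\Phi)$,
\begin{equation*}
\|(X_{\opt}+H)_{\vecc}\|^2_{\ell_2(\widetilde{W})} = \|(X_{\opt})_{\vecc}\|^2_{\ell_2(\widetilde{W})} + 2\,\mathrm{Re}\,\langle \widetilde{W}(X_{\opt})_{\vecc},H_{\vecc}\rangle_{\ell_2} + \|H_{\vecc}\|^2_{\ell_2(\widetilde{W})}.
\end{equation*}
Since $\|H_{\vecc}\|^2_{\ell_2(\widetilde{W})}\geq 0$ with equality only for $H=0$, the point $X_{\opt}$ minimizes the objective over the affine feasible set if and only if the linear term vanishes for every $H\in\mathcal{N}(\Phi)$; replacing $H$ by $iH$ promotes the vanishing of the real part to the vanishing of the full complex inner product, yielding \cref{eq_proof_JminX_1}, while the strict inequality gives uniqueness.

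Finally I would verify that the closed form $X_{\opt}=\big(\mathcal{W}^{-1}\circ\Phi^*\circ(\Phi\circ\mathcal{W}^{-1}\circ\Phi^*)^{-1}\big)(Y)$ meets both conditions. Applying $\Phi$ gives $\Phi(X_{\opt})=(\Phi\circ\mathcal{W}^{-1}\circ\Phi^*)(\Phi\circ\mathcal{W}^{-1}\circ\Phi^*)^{-1}(Y)=Y$, so the constraint holds. For the orthogonality, observe that $\widetilde{W}(X_{\opt})_{\vecc}=\Phi^*\big((\Phi\circ\mathcal{W}^{-1}\circ\Phi^*)^{-1}(Y)\big)_{\vecc}$ lies in $\Ran(\Phi^*)=\mathcal{N}(\Phi)^{\perp}$, precisely because applying $\widetilde{W}$ undoes the leading $\mathcal{W}^{-1}$; hence $\langle\widetilde{W}(X_{\opt})_{\vecc},H_{\vecc}\rangle_{\ell_2}=0$ for all $H\in\mathcal{N}(\Phi)$, and by the characterization just proved $X_{\opt}$ is the unique minimizer. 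I expect the main obstacle to be the positive-definiteness bookkeeping for the non-square case via the pseudo-inverses; once that is secured, the remainder is the routine KKT computation.
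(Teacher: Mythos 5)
Your proposal is correct and follows essentially the same route as the paper: reduce to the weighted least squares problem since only the first summand of $\mathcal{J}_p$ depends on $X$, establish positive definiteness of $\widetilde{W}(Z)$ via the SVD and the observation that full rank of $Z$ forces at least one of $[(ZZ^*)^{1/2}]^+$, $[(Z^*Z)^{1/2}]^+$ to be positive definite, and then apply the standard orthogonality/normal-equation characterization of the minimizer. The only difference is that the paper delegates this last step to \citep[Lemma 5.1]{Fornasier11}, whereas you carry out the quadratic expansion and the verification of the closed-form solution explicitly, which is exactly the content of that cited lemma.
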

In \cref{sec_proof_JminX_prop}, the interested reader can find a sketch of the proof of this lemma. 
\subsection{Basic properties of the algorithm and convergence results} \label{sec_preliminary_results}
In the following subsection, we will have a closer look at \cref{algo1} and point out some of its properties, in particular, the boundedness of the iterates $(X^{(n)})_{n\in \mathbb{N}}$ and the fact that two consecutive iterates are getting arbitrarily close as $n\rightarrow \infty$. 
These results will be useful to develop finally the proof of convergence and to determine the rate of convergence of \cref{algo1} under conditions determined along the way.

\begin{lemma}\label{prelim}Let $(X^{(n)},\epsilon^{(n)})_{n \in \N}$ be the sequence of iterates and smoothing parameters of \cref{algo1}. 
Let $X^{(n)}= \sum_{i=1}^{d}  \sigma_i^{(n)}u_i^{(n)} v_i^{(n)*}$ be the SVD of the $n$-th iterate $X^{(n)}$. 
Let $(Z^{(n)})_{n \in \mathbb{N}}$ be a corresponding sequence such that
\[
Z^{(n)} = \sum_{i=1}^{d} (\sigma_i^{(n)2}+\epsilon^{(n)2})^{\frac{p-2}{2}} u_i^{(n)} v_i^{(n)*}
\]
for $n \in \N$. Then the following properties hold:
\begin{enumerate} 
\item[(a)] 
\label{monotonicity}
$
\mathcal{J}_p(X^{(n)},\epsilon^{(n)},{Z}^{(n)})
\geq\mathcal{J}_p(X^{(n+1)},\epsilon^{(n+1)},{Z}^{(n+1)})
$ for all $n \geq 1$,
\item[(b)]
$\label{xbound}
\|X^{(n)}\|^p_{S_p}\leq\mathcal{J}_p(X^{(1)},\epsilon^{(0)},{Z}^{(0)}) =: \mathcal{J}_{p,0}$  for all $n \geq 1$, 
\item[(c)]
\label{cauchyx} The iterates $X^{(n)},X^{(n+1)}$ come arbitrarily close as $n \rightarrow \infty$, i.e., \\
$\lim\limits_{n \rightarrow \infty} \|(X^{(n)}-X^{(n+1)})_{\vecc}\|_{\ell_2}^2 = 0.$
\end{enumerate}
\end{lemma}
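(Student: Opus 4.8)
The plan is to handle the three parts in the order (a), (b), (c), exploiting the variational reading of the algorithm furnished by \Cref{Wsubs,JminX_prop}: the matrix $Z^{(n)}$ is precisely the minimizer of $\mathcal{J}_p(X^{(n)},\epsilon^{(n)},\cdot)$ given by \Cref{Wsubs}, while $X^{(n+1)}$ is the minimizer of $\mathcal{J}_p(\cdot,\epsilon^{(n)},Z^{(n)})$ over $\{\Phi(X)=Y\}$ by \Cref{JminX_prop}. For part (a) I would establish monotonicity through the alternating-minimization chain
\[
\mathcal{J}_p(X^{(n)},\epsilon^{(n)},Z^{(n)}) \ge \mathcal{J}_p(X^{(n+1)},\epsilon^{(n)},Z^{(n)}) \ge \mathcal{J}_p(X^{(n+1)},\epsilon^{(n+1)},Z^{(n)}) \ge \mathcal{J}_p(X^{(n+1)},\epsilon^{(n+1)},Z^{(n+1)}).
\]
The first inequality holds because $X^{(n)}$ is feasible and $X^{(n+1)}$ minimizes over feasible matrices; the second because the only $\epsilon$-dependent summand of $\mathcal{J}_p$, namely $\tfrac{\epsilon^2 p}{2}\sum_i \sigma_i(Z)$, is nondecreasing in $\epsilon$ while $\epsilon^{(n+1)}\le \epsilon^{(n)}$ by \eqref{epsmin}; the third because $Z^{(n+1)}$ minimizes $\mathcal{J}_p(X^{(n+1)},\epsilon^{(n+1)},\cdot)$ by \Cref{Wsubs}.

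For part (b), \Cref{Wsubs} gives $\mathcal{J}_p(X^{(n)},\epsilon^{(n)},Z^{(n)}) = g_{\epsilon^{(n)}}^p(X^{(n)}) = \sum_i (\sigma_i(X^{(n)})^2+\epsilon^{(n)2})^{p/2}$, which dominates $\|X^{(n)}\|_{S_p}^p = \sum_i \sigma_i(X^{(n)})^p$ termwise since $\epsilon^{(n)}\ge 0$. Chaining the monotonicity of (a) — together with the two closing steps of the above chain applied once to pass from the initialization value $\mathcal{J}_{p,0}=\mathcal{J}_p(X^{(1)},\epsilon^{(0)},Z^{(0)})$ down to $\mathcal{J}_p(X^{(1)},\epsilon^{(1)},Z^{(1)})$ (these two steps are the $\epsilon$-decrease and the $Z$-minimization, so they do not require a nonexistent $X^{(0)}$) — yields $\|X^{(n)}\|_{S_p}^p \le \mathcal{J}_{p,0}$ for all $n\ge 1$. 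In particular $\sigma_i(X^{(n)}) \le \mathcal{J}_{p,0}^{1/p}=:R$ uniformly in $n$, a bound I will need in (c).

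For part (c), the sequence $a_n := \mathcal{J}_p(X^{(n)},\epsilon^{(n)},Z^{(n)})$ is nonincreasing by (a) and bounded below by $0$, hence convergent, so $a_n - a_{n+1}\to 0$. The first step of the chain isolates the $X$-dependence: since only $\tfrac{p}{2}\|X_{\vecc}\|^2_{\ell_2(\widetilde{W}^{(n)})}$ depends on $X$ and $X^{(n+1)}$ is the weighted least-squares minimizer, the optimality condition \eqref{eq_proof_JminX_1} makes $X^{(n+1)}_{\vecc}$ orthogonal, in the $\widetilde{W}^{(n)}$-weighted inner product, to $\mathcal{N}(\Phi)\ni (X^{(n)}-X^{(n+1)})$, which gives the Pythagorean identity
\[
\mathcal{J}_p(X^{(n)},\epsilon^{(n)},Z^{(n)}) - \mathcal{J}_p(X^{(n+1)},\epsilon^{(n)},Z^{(n)}) = \tfrac{p}{2}\,\|(X^{(n)}-X^{(n+1)})_{\vecc}\|^2_{\ell_2(\widetilde{W}^{(n)})}.
\]
As the remaining two steps only decrease the functional further, this weighted norm is at most $\tfrac{2}{p}(a_n-a_{n+1})\to 0$. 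To pass to the unweighted norm I would bound the spectrum of $\widetilde{W}^{(n)}$ from below uniformly: its eigenvalues on the basis $u_i^{(n)}v_j^{(n)*}$ are $2[(\sigma_i^{(n)2}+\epsilon^{(n)2})^{(2-p)/2}+(\sigma_j^{(n)2}+\epsilon^{(n)2})^{(2-p)/2}]^{-1}$, and with $\sigma_i^{(n)}\le R$ from (b) and $\epsilon^{(n)}\le \epsilon^{(0)}=1$ they are all at least $(R^2+1)^{-(2-p)/2}>0$. Hence $\|(X^{(n)}-X^{(n+1)})_{\vecc}\|^2_{\ell_2} \le (R^2+1)^{(2-p)/2}\,\|(X^{(n)}-X^{(n+1)})_{\vecc}\|^2_{\ell_2(\widetilde{W}^{(n)})}\to 0$.

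I expect the main obstacle to be this last step of (c): securing the uniform positive lower bound on the eigenvalues of the harmonic-mean weight matrices $\widetilde{W}^{(n)}$, which is exactly what upgrades convergence in the vanishing, iteration-dependent weighted norm to convergence in the fixed Frobenius norm. Because this lower bound rests on the a priori boundedness of the singular values from (b) and on the boundedness of $(\epsilon^{(n)})$ by its initial value, the three parts genuinely have to be proved in this order.
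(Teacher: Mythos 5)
Your proposal is correct and follows essentially the same route as the paper: the identical three-step alternating-minimization chain for (a), the same bound $\|X^{(n)}\|_{S_p}^p\leq g^p_{\epsilon^{(n)}}(X^{(n)})=\mathcal{J}_p(X^{(n)},\epsilon^{(n)},Z^{(n)})\leq \mathcal{J}_{p,0}$ for (b), and for (c) exactly the argument (monotone decrease, Pythagorean identity from the orthogonality condition \cref{eq_proof_JminX_1}, and a uniform lower bound on the spectrum of $\widetilde{W}^{(n)}$ via the Kronecker-sum spectrum and the bound from (b)) that the paper outsources to \citep[Proposition 6.1]{Fornasier11}, noting only the Kronecker-sum modification you carry out explicitly. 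The only difference is one of presentation: you spell out the cited computation in full, which the paper leaves as a reference.
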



At this point we notice that, assuming $X^{(n)} \to \overline{X}$ and $\epsilon^{(n)}\rightarrow \overline{\epsilon}$ for $n \to \infty$ with the limit point $(\overline{X},\overline{\epsilon}
) \in M_{d_1\times d_2} \times \R_{\geq 0}$, it would follow that
\[
\mathcal{J}_p(X^{(n)},\epsilon^{(n)},Z^{(n)}) \to g_{\overline{\epsilon}}^p(\overline{X})
\]
for $n \to \infty$ by equation \cref{Jf}.

Now, let $\epsilon > 0$, a measurement vector $Y \in \C^m$ and the linear operator $\Phi$ be given and consider the optimization problem
\begin{equation}\label{gemin}
\min\limits_{\substack{X \in M_{d_1 \times d_2} \\ \Phi(X)=Y}} g_{\epsilon}^p(X)
\end{equation}
with $g_{\epsilon}^p(X) = \sum_{i=1}^{d} (\sigma_i (X)^2 + \epsilon^2)^{\frac{p}{2}}$ and $\sigma_i(X)$ being the $i$-th singular value of $X$, cf. \cref{Jf}.
If $g_{\epsilon}^p(X)$ is non-convex, which is the case for $p < 1$, one might practically only be able to find critical points of the problem. 
\begin{lemma}\label{charf}
Let $X \in M_{d_1 \times d_2}$ be a matrix with the SVD such that $X =$ \\$\sum_{i=1}^d \sigma_i u_i v_{i}^*$, let $\epsilon > 0$. If we define
\[
\widetilde{W}(X,\epsilon)= 2 \bigg[\Big( \sum_{i=1}^d (\sigma_i^2 + \epsilon^2)^{\frac{2-p}{2}} u_i u_i^* \Big) \oplus \Big(\sum_{i=1}^d (\sigma_i^2 + \epsilon^2)^{\frac{2-p}{2}} v_i v_i^*\Big)\bigg]^{-1} \in H_{d_1 d_2 \times d_1 d_2},
\]
then $\widetilde{W}(X^{(n)},\epsilon^{(n)}) = \widetilde{W}^{(n)}$, with $\widetilde{W}^{(n)}$ defined as in \cref{algo1}, cf. \cref{Wtilde_firstdef}.\\
Furthermore, $X$ is a critical point of the optimization problem \cref{gemin} if and only if 
\begin{equation} \label{eq_charf}
\langle \widetilde{W}(X,\epsilon)X_{\vecc},H_{\vecc}\rangle_{\ell_2} = 0 \; \text{ for all } \; H \in \mathcal{N}(\Phi)\;\;\text{ and } \;\; \Phi(X) = Y. 
\end{equation}
In the case that $g_\epsilon^p$ is convex, i.e., if $p = 1$, \cref{eq_charf} implies that $X$ is the unique minimizer of \cref{gemin}.
\end{lemma}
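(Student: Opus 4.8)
The plan is to establish the three assertions in order: the matrix identity $\widetilde{W}(X^{(n)},\epsilon^{(n)})=\widetilde{W}^{(n)}$, the first-order characterization \cref{eq_charf} of critical points of \cref{gemin}, and global optimality in the convex case $p=1$.

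For the first identity I would simply unwind both definitions through the SVD $X^{(n)}=U^{(n)}\Sigma^{(n)}V^{(n)*}=\sum_{i=1}^{d}\sigma_i^{(n)}u_i^{(n)}v_i^{(n)*}$. Expanding $U^{(n)}(\overline{\Sigma}_{d_1}^{(n)})^{2-p}U^{(n)*}$ in dyadic form and inserting that, by \cref{sigmabar}, $(\overline{\Sigma}_{d_1}^{(n)})_{ii}=(\sigma_i(X^{(n)})^2+\epsilon^{(n)2})^{1/2}$ for $i\leq d$ while the entries for $d<i\leq D$ vanish, one obtains $U^{(n)}(\overline{\Sigma}_{d_1}^{(n)})^{2-p}U^{(n)*}=\sum_{i=1}^{d}(\sigma_i^{(n)2}+\epsilon^{(n)2})^{\frac{2-p}{2}}u_i^{(n)}u_i^{(n)*}$, and analogously for the $V$-block. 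Substituting both into \cref{Wtilde_firstdef} reproduces verbatim the matrix $\widetilde{W}(X,\epsilon)$ of the statement.

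The core is the second assertion, which rests on a gradient computation matched against the action of $\widetilde{W}(X,\epsilon)$. Since $\epsilon>0$, the map $\sigma\mapsto(\sigma^2+\epsilon^2)^{p/2}$ is a smooth \emph{even} function, so $g^p_\epsilon$ is differentiable; using the representation $g^p_\epsilon(X)=\trace\!\big[(XX^*+\epsilon^2\mathbf{I}_{d_1})^{p/2}\big]-(d_1-d)\epsilon^p$ and standard matrix calculus I would derive $\nabla g^p_\epsilon(X)=p\,(XX^*+\epsilon^2\mathbf{I}_{d_1})^{\frac{p-2}{2}}X=p\sum_{i=1}^d\sigma_i(\sigma_i^2+\epsilon^2)^{\frac{p-2}{2}}u_iv_i^*$. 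I would then evaluate $\widetilde{W}(X,\epsilon)X_{\vecc}$ exactly as in the proof of \cref{harmonic}: writing $X_{\vecc}$ in the eigenbasis $\{v_k\otimes u_k\}$ of the Kronecker sum and noting that each $u_k$ (resp.\ $v_k$) is an eigenvector of the left (resp.\ right) summand with eigenvalue $(\sigma_k^2+\epsilon^2)^{\frac{2-p}{2}}$, so that the two summands agree and the combined eigenvalue is $2(\sigma_k^2+\epsilon^2)^{\frac{2-p}{2}}$. Inverting via \cref{lemma_jameson} and matricizing yields $\big(\widetilde{W}(X,\epsilon)X_{\vecc}\big)_{\mat}=\sum_{i=1}^d\sigma_i(\sigma_i^2+\epsilon^2)^{\frac{p-2}{2}}u_iv_i^*=\tfrac1p\,\nabla g^p_\epsilon(X)$. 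The stationarity condition for \cref{gemin} is $\Phi(X)=Y$ together with $\langle\nabla g^p_\epsilon(X),H\rangle_F=0$ for all feasible directions $H\in\mathcal{N}(\Phi)$; substituting the last identity and cancelling the positive scalar $p$ gives precisely \cref{eq_charf}. For the case $p=1$, I would exhibit convexity through the identity $g^1_\epsilon(X)=\big\|[\,X^{*}\;\;\epsilon\mathbf{I}_{d_2}\,]^{*}\big\|_{S_1}-c$, where the stacked matrix has squared singular values equal to the eigenvalues $\sigma_i^2+\epsilon^2$ of $X^*X+\epsilon^2\mathbf{I}_{d_2}$ and $c=c(\epsilon,d_1,d_2)\geq0$ is a constant; as a norm precomposed with an affine map this is convex, and strictly convex on the feasible affine set for $\epsilon>0$. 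Hence the affine-constrained, convex problem \cref{gemin} has \cref{eq_charf} as the exact optimality condition for its unique global minimizer.

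The main obstacle I expect is the second step: making the gradient of the spectral function $g^p_\epsilon$ rigorous at coalescing or vanishing singular values (which is exactly what the smoothing $\epsilon>0$ secures, since $(\sigma^2+\epsilon^2)^{p/2}$ lifts to a differentiable function of $X$), and keeping the Kronecker-sum conventions consistent so that $\big(\widetilde{W}(X,\epsilon)X_{\vecc}\big)_{\mat}$ aligns with $\nabla g^p_\epsilon(X)$ up to the single factor $p$. By comparison, the identity of the first assertion and the convexity argument for $p=1$ are routine once the stacking representation is identified.
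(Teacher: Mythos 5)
Your first two assertions track the paper's own proof almost step for step: the identity $\widetilde{W}(X^{(n)},\epsilon^{(n)})=\widetilde{W}^{(n)}$ by expanding the SVD in \cref{Wtilde_firstdef}; the gradient formula $\nabla g^p_\epsilon(X)=p(XX^*+\epsilon^2\mathbf{I}_{d_1})^{\frac{p-2}{2}}X=p\sum_{i=1}^d\sigma_i(\sigma_i^2+\epsilon^2)^{\frac{p-2}{2}}u_iv_i^*$ via trace calculus; the identification $\big[\widetilde{W}(X,\epsilon)X_{\vecc}\big]_{\mat}=\tfrac1p\nabla g^p_\epsilon(X)$ through the Kronecker-sum inversion formula of \cref{lemma_jameson}; and then first-order stationarity of a smooth function on an affine set. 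Your additive-constant device $g^p_\epsilon(X)=\trace\big[(XX^*+\epsilon^2\mathbf{I}_{d_1})^{p/2}\big]-(d_1-d)\epsilon^p$ is a cosmetic substitute for the paper's ``WLOG $d_1\le d_2$''; both are fine. Up to this point the proposal is correct and is essentially the paper's argument.

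The genuine soft spot is the final claim, since the lemma asserts that for $p=1$ condition \cref{eq_charf} identifies the \emph{unique} minimizer of \cref{gemin}. Your stacking identity $g^1_\epsilon(X)=\big\|\begin{pmatrix}X\\ \epsilon\mathbf{I}_{d_2}\end{pmatrix}\big\|_{S_1}-(d_2-d)\epsilon$ does prove convexity, but the justification you give for strictness --- ``as a norm precomposed with an affine map'' --- proves nothing beyond plain convexity: norms are never strictly convex, and composing with an injective affine map does not repair this (the nuclear norm itself, which is what you are composing, is affine on large sets, e.g.\ on the positive semidefinite cone). So strictness must genuinely be extracted from the $\epsilon\mathbf{I}_{d_2}$ block, and that requires an argument you do not supply. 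One way to close the gap: if $g^1_\epsilon$ were affine on a segment $[X_1,X_2]$, then equality in the triangle inequality for $\|\cdot\|_{S_1}$ forces the stacked matrices $M_i=\begin{pmatrix}X_i\\ \epsilon\mathbf{I}_{d_2}\end{pmatrix}$ to admit polar decompositions with a common partial isometry $W=M_i\,(X_i^*X_i+\epsilon^2\mathbf{I}_{d_2})^{-1/2}$ (well defined because $\epsilon>0$ makes $|M_i|$ invertible); equating the lower blocks of the two expressions for $W$ gives $X_1^*X_1=X_2^*X_2$, and then the upper blocks give $X_1=X_2$. Alternatively one can compute the second directional derivative of $g^1_\epsilon$. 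For comparison, the paper finesses exactly this point by continuing ``analogously to'' the vector-case Lemma 5.2 of Daubechies et al., where strict convexity of $z\mapsto\sum_j(z_j^2+\epsilon^2)^{1/2}$ is immediate because the coordinates decouple --- an immediacy that is lost for singular values, which is precisely why your one-line assertion is the step that needs the work.
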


Now, we have some basic properties of the algorithm at hand that allow us, together with the strong nullspace property in \cref{NSPs} to carry out the proof of the convergence result in \cref{conv}. The proof is sketched in \cref{sec_proof_conv_app} 
 using the results above.
\subsection{Locally superlinear convergence} \label{sec_localconvrate}
In the proof of \cref{rate} we use the following bound on perturbations of the singular value decomposition, which is originally due to Wedin \citep{Wedin72}. It bounds the alignment of the subspaces spanned by the singular vectors of two matrices by their norm distance, given a gap between the first singular values of the one matrix and the last singular values of the other matrix that is sufficiently pronounced. 
%
\begin{lemma}[Wedin's bound \citep{Stewart06}] \label{Wedinbound} 
Let $X$ and $\bar{X}$ be two matrices of the same size and their singular value decompositions 
\begin{align*}
X=\begin{pmatrix}U_1&U_2\end{pmatrix}\begin{pmatrix}\Sigma_1 &0\\0& \Sigma_2 \end{pmatrix}\begin{pmatrix}V_1^*\\V_2^*\end{pmatrix}\quad\text{ and } \quad \bar{D}=\begin{pmatrix}\bar{U}_1&\bar{U}_2\end{pmatrix}\begin{pmatrix}\bar{\Sigma}_1 &0\\0& \bar{\Sigma}_2 \end{pmatrix}\begin{pmatrix}\bar{V}_1^*\\\bar{V}_2^*\end{pmatrix},
\end{align*}
where the submatrices have the sizes of corresponding dimensions. Suppose that $\delta,\alpha$ satisfying $0< \delta\leq \alpha$ are such that $\alpha\leq \sigma_{\min}(\Sigma_1)$ and $\sigma_{\max}(\bar{\Sigma}_2)<\alpha-\delta$. Then 
\begin{equation}\label{Wedin}
\|\bar{U}_2^*U_1\|_{S_\infty} \leq \sqrt{2} \frac{\|X-\bar{X}\|_{S_\infty}}{\delta} \text{  and  }\|\bar{V}_2^*V_1\|_{S_\infty} \leq \sqrt{2}\frac{\|X-\bar{X}\|_{S_\infty}}{\delta}.
\end{equation}
\end{lemma}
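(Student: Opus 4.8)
The plan is to prove both inequalities at once via the classical residual-and-Sylvester argument, exploiting that the gap hypothesis separates the relevant spectra. Throughout I write $\|\cdot\|$ for $\|\cdot\|_{S_\infty}$ and abbreviate $A := \bar U_2^* U_1$ and $B := \bar V_2^* V_1$, the two quantities to be bounded. The first step is to measure the exact singular relations of $X$ against $\bar X$: since $X V_1 = U_1\Sigma_1$ and $X^* U_1 = V_1\Sigma_1$, I would set the residuals $R := \bar X V_1 - U_1\Sigma_1 = (\bar X - X)V_1$ and $S := \bar X^* U_1 - V_1\Sigma_1 = (\bar X - X)^* U_1$. Because $U_1,V_1$ have orthonormal columns, this yields at once $\|R\| \le \|X-\bar X\|$ and $\|S\| \le \|X-\bar X\|$.

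Next I would project these residuals onto the trailing singular subspaces of $\bar X$. Left-multiplying by $\bar U_2^*$ and $\bar V_2^*$ and inserting the SVD identities $\bar U_2^*\bar X = \bar\Sigma_2\bar V_2^*$ and $\bar V_2^*\bar X^* = \bar\Sigma_2^*\bar U_2^*$ gives the coupled Sylvester system
\[
A\Sigma_1 - \bar\Sigma_2 B = -\bar U_2^* R, \qquad B\Sigma_1 - \bar\Sigma_2^* A = -\bar V_2^* S,
\]
whose right-hand sides have norm at most $\|R\|$ and $\|S\|$. Stacking $A$ over $B$, this becomes a single Sylvester equation $\mathcal A\Sigma_1 - \mathcal L\mathcal A = \mathcal C$ with $\mathcal A = \begin{pmatrix}A\\B\end{pmatrix}$, $\mathcal C = -\begin{pmatrix}\bar U_2^* R\\\bar V_2^* S\end{pmatrix}$, and $\mathcal L = \begin{pmatrix}0 & \bar\Sigma_2\\\bar\Sigma_2^* & 0\end{pmatrix}$. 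This $\mathcal L$ is exactly the Hermitian dilation of the trailing block $\bar\Sigma_2$, hence Hermitian with spectrum $\{\pm\sigma_i(\bar\Sigma_2)\}\cup\{0\}$ contained in $(-(\alpha-\delta),\alpha-\delta)$.

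The final step solves this Sylvester equation in the operator norm. Since $\sigma_{\min}(\Sigma_1)\ge\alpha$ while $\lambda_{\max}(\mathcal L)=\sigma_{\max}(\bar\Sigma_2)<\alpha-\delta$, the spectra are separated by $\delta$, so the representation $\mathcal A = \int_0^\infty e^{t\mathcal L}\,\mathcal C\, e^{-t\Sigma_1}\,dt$ converges and solves the equation, yielding $\|\mathcal A\| \le \|\mathcal C\|\int_0^\infty e^{t(\alpha-\delta)}e^{-t\alpha}\,dt = \|\mathcal C\|/\delta$. I then conclude using $\max(\|A\|,\|B\|)\le\|\mathcal A\|$ (stacking only increases the spectral norm) together with $\|\mathcal C\| \le \sqrt{\|R\|^2+\|S\|^2} \le \sqrt2\,\|X-\bar X\|$. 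This gives both stated bounds simultaneously, and the factor $\sqrt2$ arises precisely from combining the two residuals into $\mathcal C$.

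I expect the main obstacle to be the operator-norm solution of the Sylvester equation, rather than any of the algebraic bookkeeping. The naive entrywise solution $\mathcal A_{ij} = \mathcal C_{ij}/(\sigma_j - \lambda_i)$ only controls the Frobenius norm, since the corresponding Schur-multiplier bound fails in $S_\infty$; the integral representation above — equivalently a Davis--Kahan argument applied to the Hermitian dilations of $X$ and $\bar X$ — is what delivers the spectral-norm estimate. A secondary, routine point is the rectangular case $d_1\neq d_2$: the zero singular values of $\bar X$ contribute only directions with $\bar\sigma=0<\alpha-\delta$, so they stay within the required spectral band of $\mathcal L$ and are harmless.
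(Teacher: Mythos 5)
Your proof is correct, and since the paper does not prove this lemma at all---it imports it from Wedin \citep{Wedin72} via Stewart's survey \citep{Stewart06}---your argument in fact reconstructs essentially the classical proof from the cited literature: the residuals $R=(\bar X - X)V_1$ and $S=(\bar X - X)^*U_1$, the two coupled equations stacked into a single Sylvester equation against the Hermitian dilation $\mathcal{L}$ of $\bar\Sigma_2$, and the spectral-gap integral representation yielding the factor $1/\delta$, with the $\sqrt{2}$ arising exactly as you say from combining the two residuals into $\mathcal{C}$. The only step to make explicit for full rigor is uniqueness: because the spectra of $\Sigma_1$ and $\mathcal{L}$ are disjoint (separated by $\delta>0$), the Sylvester operator $\mathcal{A}\mapsto \mathcal{A}\Sigma_1-\mathcal{L}\mathcal{A}$ is invertible, so the stacked matrix $\left(\begin{smallmatrix}A\\ B\end{smallmatrix}\right)$ is \emph{equal} to the convergent integral rather than the integral merely being one solution of the equation---after which your norm estimates, including the treatment of the rectangular case via the zero eigenvalues of $\mathcal{L}$, go through verbatim.
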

%
%

As a first step towards the proof of \cref{rate}, we show the following lemma.
\begin{lemma} \label{lemma_localrate_1}
Let $(X^{(n)})_n$ be the output sequence of \cref{algo1} for parameters $\Phi,Y,r$ and $0 < p \leq 1$, and $X_0 \in M_{d_1 \times d_2}$ be a matrix such that $\Phi (X_0)=Y$.
\begin{enumerate}[(i)]
\item Let $\eta_{2r}^{(n+1)}$ be the best rank-$2r$ approximation of $\eta^{(n+1)}=X^{(n+1)}-X_0$. Then
\begin{equation*}
\|\eta^{(n+1)}-\eta_{2r}^{(n+1)}\|_{S_p}^{2p} \leq 2^{2-p} \bigg(\sum_{i=r+1}^{d} \big(\sigma_i ^2(X^{(n)}) + \epsilon^{(n)2}\big)^{\frac{p}{2}} \bigg)^{2-p} \|\eta_{\vecc}^{(n+1)}\|_{\ell_2(\widetilde{W}^{(n)})}^{2p},
\end{equation*}
where $\widetilde{W}^{(n)}$ denotes the harmonic mean weight matrix from \cref{Wtilde_firstdef}.
\item Assume that the linear map $\Phi: M_{d_1 \times d_2} \to \mathbb{C}^m$ fulfills the strong Schatten-$p$ NSP of order $2r$ with constant $\gamma_{2r} < 1$. Then
\begin{equation}
\begin{split}
\|\eta^{(n+1)}\|_{S_2}^{2p} &\leq 2^p  \frac{\gamma_{2r}^{2-p}}{r^{2-p}} \bigg(\sum_{i=r+1}^{d} \big(\sigma_i ^2(X^{(n)}) + \epsilon^{(n)2}\big)^{\frac{p}{2}} \bigg)^{2-p} \|\eta_{\vecc}^{(n+1)}\|_{\ell_2(\widetilde{W}^{(n)})}^{2p} 
\end{split}
\end{equation}
\item Under the same assumption as for (ii), it holds that
\[
\|\eta^{(n+1)}\|_{S_p}^{2p} \leq (1+\gamma_{2r})^2 2^{2-p} \bigg(\sum_{i=r+1}^{d} \big(\sigma_i ^2(X^{(n)}) + \epsilon^{(n)2}\big)^{\frac{p}{2}} \bigg)^{2-p} \|\eta_{\vecc}^{(n+1)}\|_{\ell_2(\widetilde{W}^{(n)})}^{2p}.
\]
\end{enumerate}
\end{lemma}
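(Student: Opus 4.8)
The plan is to isolate statement (i) as the analytic core and then obtain (ii) and (iii) from it by feeding in the strong Schatten-$p$ NSP of order $2r$ together with elementary Schatten-norm comparisons. Throughout I write $\eta := \eta^{(n+1)} = X^{(n+1)}-X_0$. The decisive structural observation is that both $X^{(n+1)}$ and $X_0$ satisfy the linear constraint, so $\eta \in \mathcal{N}(\Phi)$; this is exactly what lets the NSP act on $\eta$ in (ii) and (iii), whereas (i) needs no such assumption. I also record, as in \cref{eq_IRLS_general_QF}, that in the singular basis $(u_i^{(n)},v_j^{(n)})$ of $X^{(n)}$ the weighted norm diagonalizes as
\[
\|\eta_{\vecc}\|_{\ell_2(\widetilde{W}^{(n)})}^2 = \sum_{i,j} H_{ij}^{(n)}\,\big|\langle u_i^{(n)},\eta\, v_j^{(n)}\rangle\big|^2,\qquad H_{ij}^{(n)} = \frac{2}{(\widehat{\sigma}_i^{(n)})^{2-p}+(\widehat{\sigma}_j^{(n)})^{2-p}},
\]
with $\widehat{\sigma}_i^{(n)} := (\sigma_i(X^{(n)})^2+\epsilon^{(n)2})^{1/2}$.

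For (i) I would first reduce the Schatten-$p$ tail beyond rank $2r$ to a single \emph{corner} block. Let $P,Q$ be the orthogonal projections onto $\mathrm{span}(u_1^{(n)},\dots,u_r^{(n)})$ and $\mathrm{span}(v_1^{(n)},\dots,v_r^{(n)})$, and set $C := (I-P)\eta(I-Q)$. Since $\eta - C = P\eta + \eta Q - P\eta Q$ has rank at most $2r$, Weyl's inequality gives $\sigma_{2r+j}(\eta)\le\sigma_j(C)$ for all $j\ge 1$, whence $\|\eta-\eta_{2r}^{(n+1)}\|_{S_p}^p\le\|C\|_{S_p}^p$, and it suffices to bound the latter. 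Writing $\|C\|_{S_p}^p=\sum_k\sigma_k(C)^p$ and applying Hölder with the conjugate exponents $\tfrac{2}{p}$ and $\tfrac{2}{2-p}$, testing the singular values of $C$ against the weights $H_{ij}^{(n)}$ on the range $i,j>r$, produces on the dual side a sum of $(H_{ij}^{(n)})^{-p/(2-p)}$. Here the harmonic-mean structure is what makes the estimate sharp: since $0<p\le1$ forces $p/(2-p)\le1$, the map $t\mapsto t^{p/(2-p)}$ is subadditive, so
\[
\big(H_{ij}^{(n)}\big)^{-\frac{p}{2-p}} = 2^{-\frac{p}{2-p}}\big((\widehat{\sigma}_i^{(n)})^{2-p}+(\widehat{\sigma}_j^{(n)})^{2-p}\big)^{\frac{p}{2-p}}\le 2^{-\frac{p}{2-p}}\big((\widehat{\sigma}_i^{(n)})^{p}+(\widehat{\sigma}_j^{(n)})^{p}\big),
\]
which lets the dual side collapse onto the smoothed tail $\sum_{i>r}(\widehat{\sigma}_i^{(n)})^p=\sum_{i>r}(\sigma_i^2(X^{(n)})+\epsilon^{(n)2})^{p/2}$ and yields the constant $2^{2-p}$.

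Statements (ii) and (iii) then follow by splitting $\eta$ at rank $2r$ and invoking the NSP. For (iii), I use the exact identity $\|\eta\|_{S_p}^p=\|\eta_{2r}^{(n+1)}\|_{S_p}^p+\|\eta-\eta_{2r}^{(n+1)}\|_{S_p}^p$; the NSP of order $2r$ gives $\|\eta_{2r}^{(n+1)}\|_{S_2}^p\le\tfrac{\gamma_{2r}}{(2r)^{1-p/2}}\|\eta-\eta_{2r}^{(n+1)}\|_{S_p}^p$, and the rank-$2r$ comparison $\|\cdot\|_{S_p}\le(2r)^{1/p-1/2}\|\cdot\|_{S_2}$ converts this to $\|\eta_{2r}^{(n+1)}\|_{S_p}^p\le\gamma_{2r}\|\eta-\eta_{2r}^{(n+1)}\|_{S_p}^p$, so $\|\eta\|_{S_p}^p\le(1+\gamma_{2r})\|\eta-\eta_{2r}^{(n+1)}\|_{S_p}^p$; squaring and inserting (i) produces exactly $(1+\gamma_{2r})^2\,2^{2-p}$. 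For (ii), I split $\|\eta\|_{S_2}^2=\|\eta_{2r}^{(n+1)}\|_{S_2}^2+\|\eta-\eta_{2r}^{(n+1)}\|_{S_2}^2$, bound the head directly by the NSP, and control the Frobenius tail through $\sum_{k>2r}\sigma_k^2\le\sigma_{2r+1}^{2-p}\|\eta-\eta_{2r}^{(n+1)}\|_{S_p}^p$ together with the averaging bound $\sigma_{2r+1}^2\le\tfrac{1}{2r}\|\eta_{2r}^{(n+1)}\|_{S_2}^2$ and a second application of the NSP. It is this averaging step that supplies the extra factor $r^{-(2-p)}$ and the power $\gamma_{2r}^{2-p}$; raising to the power $p$ and inserting (i) gives the stated bound.

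The main obstacle is the Hölder step in (i). The left singular vectors of the corner block $C$ need not be aligned with $u_i^{(n)}$, nor the right ones with $v_j^{(n)}$, so a naïve entrywise summation of the dual weights overcounts by a spurious dimensional factor of order $d-r$ (producing $(d-r)\,\widehat{\sigma}_{r+1}^{p}$ in place of the sharp tail $\sum_{i>r}\widehat{\sigma}_i^{p}$). The remedy is to pair the ordered singular values $\sigma_k(C)$ with the ordered weights by a von~Neumann/rearrangement inequality for the positive operator $X\mapsto\tfrac12((I-P)\Lambda_1(I-P)X+X(I-Q)\Lambda_2(I-Q))$ underlying $\widetilde{W}^{(n)}$ \emph{before} invoking Hölder, after which the subadditivity above telescopes the weights to the tail without any dimensional loss. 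Verifying this alignment-free bound is the one genuinely non-routine ingredient; the remaining manipulations are bookkeeping of the constants recorded above.
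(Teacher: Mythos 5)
Your handling of (ii) and (iii) is correct and coincides with the paper's: splitting $\eta$ at rank $2r$, the NSP estimate $\|\eta_{2r}^{(n+1)}\|_{S_p}^p\le\gamma_{2r}\|\eta^{(n+1)}-\eta_{2r}^{(n+1)}\|_{S_p}^p$ for (iii), and the Stechkin-type averaging bound for (ii) all reproduce the stated constants once (i) is available. The corner-block reduction at the start of (i) (that $\eta-C$ has rank at most $2r$, so $\|\eta-\eta_{2r}^{(n+1)}\|_{S_p}\le\|C\|_{S_p}$ with $C=(I-P)\eta(I-Q)$) is also exactly the paper's first step.

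The gap is the step you yourself flag, and it is not repairable in the form you propose. Your scheme needs $d-r$ weights $w_k$ satisfying both (a) $\sum_k w_k\sigma_k(C)^2\le\sum_{i,j>r}H^{(n)}_{ij}|\langle u_i^{(n)},Cv_j^{(n)}\rangle|^2$ for every corner matrix $C$, and (b) $\sum_k w_k^{-p/(2-p)}\le 2\sum_{i>r}(\widehat{\sigma}_i^{(n)})^p$. If you take the diagonal pairing $w_k=H^{(n)}_{r+k,r+k}$ (the only choice for which (b) collapses exactly to the smoothed tail), then (a) is simply false: for a $2\times 2$ corner with $s_1:=(\widehat{\sigma}_{r+1}^{(n)})^{2-p}=1$, $s_2:=(\widehat{\sigma}_{r+2}^{(n)})^{2-p}=t\ll 1$, so that $H_{11}=1$, $H_{12}=H_{21}=\tfrac{2}{1+t}$, $H_{22}=\tfrac1t$, the anti-diagonal matrix $C=\begin{pmatrix}0&1\\ t^{1/4}&0\end{pmatrix}$ has quadratic form $\tfrac{2}{1+t}(1+\sqrt t)\approx 2$, while the claimed lower bound $H_{11}\sigma_1(C)^2+H_{22}\sigma_2(C)^2=1+t^{-1/2}$ blows up. If instead you take $w_k$ to be the $d-r$ smallest weights of the whole corner (for which (a) does hold, by a majorization/rearrangement argument), then (b) fails dimensionally: when $\widehat{\sigma}_{r+1}^{(n)}\gg\widehat{\sigma}_{r+2}^{(n)},\dots,\widehat{\sigma}_{d}^{(n)}$ — which the algorithm permits, since $\epsilon^{(n)}$ may be far smaller than $\sigma_{r+1}(X^{(n)})$ — the $d-r$ smallest corner weights all lie in row or column $r+1$ and are each of size $\approx 2(\widehat{\sigma}_{r+1}^{(n)})^{-(2-p)}$, so your subadditivity step yields $\sum_k w_k^{-p/(2-p)}\approx 2^{-p/(2-p)}(d-r)(\widehat{\sigma}_{r+1}^{(n)})^p$, exceeding the tail $\approx(\widehat{\sigma}_{r+1}^{(n)})^p$ by precisely the factor $d-r$ you claim the rearrangement avoids. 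Either way, your route proves (i) only with an extra factor of order $(d-r)^{2-p}$, not with the stated constant $2^{2-p}$ and the sharp tail.

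The paper avoids entrywise weights entirely. It writes $\eta^{(n+1)}_{\vecc}=\widetilde W^{(n)-1/2}\big(\widetilde W^{(n)1/2}\eta^{(n+1)}_{\vecc}\big)$ and splits the operator $\widetilde W^{(n)-1/2}$, via the scalar identity $(s_i+s_j)^{1/2}=s_i^{1/2}(1+s_j/s_i)^{-1/2}+s_j^{1/2}(s_i/s_j+1)^{-1/2}$, into a left multiplication by $\overline{\Sigma}_{d_1}^{(n)\frac{2-p}{2}}$ plus a right multiplication by $\overline{\Sigma}_{d_2}^{(n)\frac{2-p}{2}}$, each composed with a diagonal contraction ($D_L$, $D_R$ with $\|D_L\|_{S_\infty},\|D_R\|_{S_\infty}\le1$). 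The corner block then becomes a sum of two matrix \emph{products}, and the $p$-triangle inequality together with H\"older for Schatten quasinorms, $\|AB\|_{S_p}\le\|A\|_{S_{2p/(2-p)}}\|B\|_{S_2}$, delivers the factor $\big(\sum_{i>r}(\widehat{\sigma}_i^{(n)})^p\big)^{(2-p)/2}$ times $\|\eta^{(n+1)}_{\vecc}\|^p_{\ell_2(\widetilde W^{(n)})}$ with no dimensional loss: the alignment issue you tried to resolve by rearrangement is absorbed automatically by the Schatten--H\"older inequality for products. Replacing your rearrangement step by this operator splitting is the fix.
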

\begin{proof}[Proof of \cref{lemma_localrate_1}]
(i) Let the $X^{(n)} = \widetilde{U}^{(n)} \Sigma^{(n)} \widetilde{V}^{(n)*}$ be the (full) singular value decomposition of $X^{(n)}$, i.e., $\widetilde{U}^{(n)} \in \mathcal{U}_{d_1}$ and $\widetilde{V}^{(n)} \in \mathcal{U}_{d_2}$ are unitary matrices and $ \Sigma^{(n)} =\diag(\sigma_1(X^{(n)}),\ldots,\sigma_r(X^{(n)})) \in M_{d_1 \times d_2}$. We define $U_T^{(n)} \in M_{d_1 \times r}$ as the matrix of the first $r$ columns of $\widetilde{U}^{(n)}$ and $U^{(n)}_{T_c} \in M_{d_1 \times (d_1 -r)}$ as the matrix of its last $d_1-r$ columns, so that $\widetilde{U}^{(n)} = \begin{pmatrix} U_T^{(n)} & U_{T_c}^{(n)} \end{pmatrix}$, and similarly $V_T^{(n)}$ and $V_{T_c}^{(n)}$. 

As $\mathbf{I}_{d_1} = U_T^{(n)} U_T^{(n)*} + U_{T_c}^{(n)} U_{T_c}^{(n)*}$ and $\mathbf{I}_{d_2} = V_T^{(n)} V_T^{(n)*} + {V_{T_c}^{(n)}V_{T_c}^{(n)*}}$, we note that
\[
U_{T_c}^{(n)} U_{T_c}^{(n)*}\eta^{(n+1)}V_{T_c}^{(n)} V_{T_c}^{(n)*} = \eta^{(n+1)}-U_T^{(n)} U_T^{(n)*}\eta^{(n+1)} + U_{T_c}^{(n)} U_{T_c}^{(n)*}\eta^{(n+1)}V_T^{(n)} V_T^{(n)*},
\]
while $U_T^{(n)} U_T^{(n)*}\eta^{(n+1)} + U_{T_c}^{(n)} U_{T_c}^{(n)*}\eta^{(n+1)}V_T^{(n)} V_T^{(n)*}$ has a rank of at most $2r$. This implies that 
\begin{equation} \label{eq_eta_best2r}
\|\eta^{(n+1)}-\eta_{2r}^{(n+1)}\|_{S_p} \leq \|U_{T_c}^{(n)} U_{T_c}^{(n)*}\eta^{(n+1)}V_{T_c}^{(n)} V_{T_c}^{(n)*}\|_{S_p} = \|U_{T_c}^{(n)*}\eta^{(n+1)}V_{T_c}^{(n)}\|_{S_p}.
\end{equation}
Using the definitions of $\widetilde{U}^{(n)}$ and $\widetilde{V}^{(n)}$, we write the harmonic mean weight matrices of the $n$-th iteration \cref{Wtilde_firstdef} as
\begin{equation} \label{Wtilde_seconddef}
\widetilde{W}^{(n)} = 2 (\widetilde{V}^{(n)} \otimes \widetilde{U}^{(n)})\big(\overline{\Sigma}_{d_1}^{(n)2 -p} \oplus \overline{\Sigma}_{d_2}^{(n)2 -p}\big)^{-1} (\widetilde{V}^{(n)} \otimes \widetilde{U}^{(n)})^*,
\end{equation}
where $\overline{\Sigma}_{d_1}^{(n)} \in M_{d_1 \times d_1}$ and $\overline{\Sigma}_{d_2}^{(n)} \in M_{d_2 \times d_2}$ are the diagonal matrices with the smoothed singular values of $X^{(n)}$ from \cref{sigmabar}, but filled up with zeros if necessary. Using the abbreviation 
\begin{equation} \label{eq_Omega_def}
\Omega := (\widetilde{V}^{(n)} \otimes \widetilde{U}^{(n)})^*\widetilde{W}^{(n)\frac{1}{2}}\eta^{(n+1)}_{\vecc} \in \C^{d_1 d_2},
\end{equation}
we rewrite
\begin{equation} \label{eq_eta_rewrite}
\begin{split}
\eta_{\vecc}^{(n+1)} &= \widetilde{W}^{(n)-\frac{1}{2}} \widetilde{W}^{(n)\frac{1}{2}}\eta_{\vecc}^{(n+1)} = 2^{-1/2}(\widetilde{V}^{(n)} \otimes \widetilde{U}^{(n)})\big(\overline{\Sigma}_{d_1}^{(n)2 -p} \oplus \overline{\Sigma}_{d_2}^{(n)2 -p}\big)^{1/2} \Omega \\
&= 2^{-1/2}(\widetilde{V}^{(n)} \otimes \widetilde{U}^{(n)}) \left[(\mathbf{I}_{d_2} \otimes \overline{\Sigma}_{d_1}^{(n)\frac{2-p}{2}}) D_L + (\overline{\Sigma}_{d_2}^{(n)\frac{2-p}{2}} \otimes \mathbf{I}_{d_1})D_R \right] \Omega
\end{split}
\end{equation}
with the diagonal matrices $D_L, D_R \in M_{d_1 d_2 \times d_1 d_2}$ such that 
\[
(D_L)_{i+(j-1)d_1,i+(j-1)d_1} =  \Big(1+\Big(\frac{\sigma_j^2(X^{(n)}) + \epsilon^{(n)2}}{\sigma_i^2 (X^{(n)})+\epsilon^{(n)2}}\Big)^{\frac{2-p}{2}}\Big)^{-1/2}
\]
and
\[
(D_R)_{i+(j-1)d_1,i+(j-1)d_1} = \Big(\Big(\frac{\sigma_i^2(X^{(n)}) + \epsilon^{(n)2}}{\sigma_j^2 (X^{(n)})+\epsilon^{(n)2}}\Big)^{\frac{2-p}{2}}+1\Big)^{-1/2}
\]
for $i \in [d_1]$ and $j \in [d_2]$. This can be seen from the definitions of the Kronecker product $\otimes$ and the Kronecker sum $\oplus$ (cf. \cref{sec_kroneckerhadamard}), as 
\begin{equation*} \begin{split}
&\Big(\big(\overline{\Sigma}_{d_1}^{(n)2 -p} \oplus \overline{\Sigma}_{d_2}^{(n)2 -p}\big)^{1/2}\Big)_{i+(j-1)d_1,i+(j-1)d_1} = (s_{i}+s_{j})^{1/2}\\
 &= s_i (s_i + s_j)^{-1/2} + s_j (s_i + s_j)^{-1/2} = s_i^{1/2} ( 1 + \frac{s_{j}}{s_{i}})^{-1/2} + s_{j}^{1/2} (\frac{s_{i}}{s_{j}}+1)^{-1/2} 
 \end{split}
\end{equation*}
if $s_\ell$ denotes the $\ell$-th diagonal entry of $\overline{\Sigma}_{d_2}^{(n)2 -p}$ and $\overline{\Sigma}_{d_1}^{(n)2 -p}$ for $\ell \in [\max(d_1,d_2)]$.

If we write $\overline{\Sigma}_{d_1,T_c}^{(n)\frac{2-p}{2}} \in M_{(d_1-r) \times (d_1 -r)}$ for the diagonal matrix containing the $d_1-r$ last diagonal elements of $\overline{\Sigma}_{d_1}^{(n)2 -p}$ and $\overline{\Sigma}_{d_2,T_c}^{(n)\frac{2-p}{2}} \in M_{(d_1-r) \times (d_1 -r)}$ for the diagonal matrix containing the $d_2-r$ last diagonal elements of $\overline{\Sigma}_{d_2}^{(n)2 -p}$, it follows from \cref{eq_eta_rewrite} that
\begin{equation*}
\resizebox{ \textwidth}{!}{ 
$\begin{split}
 \big\|U_{T_c}^{(n)*}&\eta^{(n+1)}V_{T_c}^{(n)}\big\|^{p}_{S_p}\!\! =\! 2^{-\frac{p}{2}}\Big\| U_{T_c}^{(n)*} \widetilde{U}^{(n)}\!\! \left[\overline{\Sigma}_{d_1}^{(n)\frac{2-p}{2}}\! (D_L \Omega)_{\mat}\! + \!(D_R \Omega)_{\mat} \overline{\Sigma}_{d_2}^{(n)\frac{2-p}{2}}\right]\! \widetilde{V}^{(n)*}V_{T_c}^{(n)}\Big\|_{S_p}^{p} \\
&\leq 2^{-\frac{p}{2}} \Big\| \overline{\Sigma}_{d_1,T_c}^{(n)\frac{2-p}{2}}\big[(D_L \Omega)_{\mat}\big]_{T_c,T_c}\Big\|_{S_p}^p + \Big\|\big[(D_R \Omega)_{\mat}\big]_{T_c,T_c} \overline{\Sigma}_{d_2,T_c}^{(n)\frac{2-p}{2}}\Big\|_{S_p}^p 
\end{split}$}
\end{equation*}
with the notation that $M_{T_c,T_c}$ denotes the submatrix of $M$ which contains the intersection of the last $d_1-r$ rows of $M$ with its last $d_2-r$ columns. 

Now, H\"older's inequality for Schatten-$p$ quasinorms (e.g., \citep[Theorem 11.2]{Gohberg2000}) can be used to see that
\begin{equation} \label{eq_T_1_def}
\Big\| \overline{\Sigma}_{d_1,T_c}^{(n)\frac{2-p}{2}}\big[(D_L \Omega)_{\mat}\big]_{T_c,T_c}\Big\|_{S_p}^p \leq 
\Big\|\overline{\Sigma}_{T_c}^{(n)\frac{2-p}{2}}\Big\|_{S_{\frac{2p}{2-p}}}^{p} \Big\|\big[(D_L \Omega)_{\mat}\big]_{T_c,T_c} \Big\|_{S_2}^p.
\end{equation}
Inserting the definition 
\[
\Big\|\overline{\Sigma}_{T_c}^{(n)\frac{2-p}{2}}\Big\|_{S_{\frac{2p}{2-p}}}^{p} \!\!\!\! =\bigg(\sum_{i=r+1}^{d} \big(\sigma_i ^2(X^{(n)}) + \epsilon^{(n)2}\big)^{\frac{2p(2-p)}{(2-p)4}} \bigg)^{\frac{2-p}{2}}\!\!\!\! =\!  \bigg(\sum_{i=r+1}^{d}\!\! \big(\sigma_i ^2(X^{(n)}) \!+ \epsilon^{(n)2}\big)^{\frac{p}{2}} \bigg)^{\frac{2-p}{2}} \\
\] 
allows us to rewrite the first factor, while the second factor can be bounded by
\begin{equation*}
\begin{split}
\big\|\big[(D_L \Omega)_{\mat}\big]_{T_c,T_c}\big\|_{S_2}^p &\leq \big\|(D_L \Omega)_{\mat}\big\|_{S_2}^p \leq \|\Omega_{\mat}\|_{S_2}^p = \|(\widetilde{V}^{(n)} \otimes \widetilde{U}^{(n)})^*\widetilde{W}^{(n)\frac{1}{2}}\eta^{(n+1)}_{\vecc}\|_{\ell_2}^p \\
&= \|\widetilde{W}^{(n)\frac{1}{2}}\eta^{(n+1)}_{\vecc}\|_{\ell_2}^p = \|\eta_{\vecc}^{(n+1)}\|_{\ell_2(\widetilde{W}^{(n)})}^p,
\end{split} 
\end{equation*}
as the matrix $D_L \in M_{d_1 d_2 \times d_1 d_2}$ from \cref{eq_eta_rewrite} fulfills $\|D_L\|_{S_\infty} \leq 1$ since its entries are bounded by $1$; we also recall the definition \cref{eq_Omega_def} of $\Omega$ and that $\widetilde{V}^{(n)}$ and $\widetilde{U}^{(n)}$ are unitary.

The term $\Big\|\big[(D_R \Omega)_{\mat}\big]_{T_c,T_c} \overline{\Sigma}_{d_2,T_c}^{(n)\frac{2-p}{2}}\Big\|_{S_p}^p$ in the bound of $\big\|U_{T_c}^{(n)*}\eta^{(n+1)}V_{T_c}^{(n)}\big\|^{p}_{S_p}$ can be estimated analogously. Combining this with \cref{eq_eta_best2r}, we obtain
\begin{equation*}
\|\eta^{(n+1)}-\eta_{2r}^{(n+1)}\|_{S_p}^{2p} \leq 2^{-p}\Big(2 \Big(\sum_{i=r+1}^{d} \big(\sigma_i ^2(X^{(n)}) + \epsilon^{(n)2}\big)^{\frac{p}{2}} \Big)^{\frac{2-p}{2}} \Big)^2 \|\eta_{\vecc}^{(n+1)}\|_{\ell_2(\widetilde{W}^{(n)})}^{2p}, 
\end{equation*}
concluding the proof of statement (i).
(ii) Using the strong Schatten-$p$ null space property \cref{k-NSP} of order $2r$ and that $\eta^{(n+1)} \in \mathcal{N}(\Phi)$, we estimate
\begin{equation*} 
\begin{split}
\|\eta^{(n+1)}\|^{2p}_{S_2}\! &= \!\big(\|\eta_{2r}^{(n+1)}\|_{S_2}^2\!\!+\!\|\eta^{(n+1)}\!\!-\eta_{2r}^{(n+1)}\|_{S_2}^2\big)^{p}\!\!  \leq  \!\Big(\frac{\gamma_{2r}^{2/p}\!+\gamma_{2r}^{2/p-1}}{(2r)^{2/p-1}}\|\eta^{(n+1)}\!\!-\eta_{2r}^{(n+1)}\|_{S_p}^2\Big)^{p} \\
& \leq \frac{\gamma_{2r}^{2-p}(\gamma_{2r}+1)^{p}}{(2r)^{2-p}} \|\eta^{(n+1)}-\eta_{2r}^{(n+1)}\|_{S_p}^{2p}\leq 2^{p} \frac{\gamma_{2r}^{2-p}}{2^{2-p} r^{2-p}}  \|\eta^{(n+1)}-\eta_{2r}^{(n+1)}\|_{S_p}^{2p},
\end{split}
\end{equation*}
where we use in the second inequality a version of Stechkin's lemma \citep[Lemma 3.1]{Kabanava15}, which leads to the estimate
\[
\|\eta^{(n+1)}-\eta_{2r}^{(n+1)}\|_{S_2}^2 \! \leq \! \frac{\|\eta^{(n+1)}_{2r}\|_{S_2}^{2-p}}{(2r)^{2-p}}\|\eta^{(n+1)}-\eta^{(n+1)}_{2r}\|_{S_p}^p \! \leq \! \frac{\gamma_{2r}^{2/p-1}}{(2r)^{2/p-1}}\|\eta^{(n+1)}-\eta^{(n+1)}_{2r}\|_{S_p}^2.
\]
Combining the estimate for $\|\eta^{(n+1)}\|^{2p}_{S_2}$ with statement (i), this results in 
\[
\|\eta^{(n+1)}\|^{2p}_{S_2}  \leq 2^p  \frac{\gamma_{2r}^{2-p}}{r^{2-p}} \bigg(\sum_{i=r+1}^{d} \big(\sigma_i ^2(X^{(n)}) + \epsilon^{(n)2}\big)^{\frac{p}{2}} \bigg)^{2-p} \|\eta_{\vecc}^{(n+1)}\|_{\ell_2(\widetilde{W}^{(n)})}^{2p},
\]
which shows statement (ii).

(iii) For the third statement, we use the strong Schatten-$p$ NSP \cref{k-NSP} to see that
\[
\|\eta^{(n+1)}\|_{S_p}^{p} = \|\eta_{2r}^{(n+1)}\|_{S_p}^{p} + \|\eta^{(n+1)}-\eta_{2r}^{(n+1)}\|_{S_p}^{p} \leq (1+\gamma_{2r}) \|\eta^{(n+1)}-\eta_{2r}^{(n+1)}\|_{S_p}^{p},
\]
and combine this with statement (i).
\end{proof}
\begin{lemma} \label{lemma_localrate_2}
Let $(X^{(n)})_n$ be the output sequence of \cref{algo1} with parameters $\Phi,Y,r$ and $0 < p \leq 1$, and $\widetilde{W}^{(n)}$ be the harmonic mean weight matrix matrix \cref{Wtilde_firstdef} for $n \in \N$. Let $X_0 \in M_{d_1 \times d_2}$ be a rank-$r$ matrix such that $\Phi (X_0) =Y$ with condition number  $\kappa:= \frac{\sigma_1 (X_0)}{\sigma_r (X_0)}$. \begin{enumerate}[(i)]
\item If \cref{rho} is fulfilled for iteration $n$, then $\eta^{(n+1)}=X^{(n)} - X_0$ fulfills
\[
\left\|\eta^{(n+1)}_{\vecc}\right\|^{2p}_{\ell_2(\widetilde{W}^{(n)})} \leq\frac{4^p r^{p/2} \sigma_r(X_0)^{p(p-1)}}{(1-\zeta)^{2p}} \kappa^p \frac{\|\eta^{(n)}\|_{S_\infty}^{2p-p^2}}{(\epsilon^{(n)})^{2p-p^2}} \|\eta^{(n+1)}\|_{S_2}^p.
\]
\item Under the same assumption as for (i), it holds that
\[
\left\|\eta^{(n+1)}_{\vecc}\right\|^{2p}_{\ell_2(\widetilde{W}^{(n)})} \leq \frac{7^p r^{p/2}\max(r,d-r)^{p/2} \sigma_r(X_0)^{p(p-1)}}{(1-\zeta)^{2p}} \kappa^p \frac{\|\eta^{(n)}\|_{S_\infty}^{2p-p^2}}{(\epsilon^{(n)})^{2p-p^2}} \|\eta^{(n+1)}\|_{S_\infty}^p
\]
\end{enumerate}
\end{lemma}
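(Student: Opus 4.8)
The plan is to turn the optimality of the iterate produced by \cref{xmin} into the key identity and then reduce everything to estimating a single matrix built from $X_0$ and the weights of step $n$. Writing $\eta^{(n+1)} = X^{(n+1)}-X_0$ (the error of the minimizer), I first note that $\Phi(\eta^{(n+1)})=Y-Y=0$, so $\eta^{(n+1)} \in \mathcal{N}(\Phi)$. Hence the characterization \cref{eq_proof_JminX_1} of \cref{JminX_prop}, applied with the admissible direction $H=\eta^{(n+1)}$, gives $\langle \widetilde{W}^{(n)}(X^{(n+1)})_{\vecc},\eta^{(n+1)}_{\vecc}\rangle_{\ell_2}=0$. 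Subtracting the analogous pairing for $X_0$ and using the vectorization isometry together with the nonnegativity of the left-hand side yields
\[
\|\eta^{(n+1)}_{\vecc}\|^2_{\ell_2(\widetilde{W}^{(n)})} = -\big\langle (\widetilde{W}^{(n)}(X_0)_{\vecc})_{\mat}, \eta^{(n+1)}\big\rangle_F = \big|\big\langle (\widetilde{W}^{(n)}(X_0)_{\vecc})_{\mat}, \eta^{(n+1)}\big\rangle_F\big|.
\]
From here the two statements differ only in the final duality step applied to $M := (\widetilde{W}^{(n)}(X_0)_{\vecc})_{\mat}$: for (i) I would use Cauchy--Schwarz in the Frobenius inner product, producing the factor $\|\eta^{(n+1)}\|_{S_2}=\|\eta^{(n+1)}_{\vecc}\|_{\ell_2}$, while for (ii) I would use trace duality $|\langle M,\eta^{(n+1)}\rangle_F|\le \|M\|_{S_1}\|\eta^{(n+1)}\|_{S_\infty}$ (H\"older for Schatten norms). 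It then remains to bound $\|M\|_{S_2}$ for (i) and $\|M\|_{S_1}$ for (ii).

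To estimate $M$ I would use the spectral representation underlying \cref{eq_IRLS_general_QF}: with $X^{(n)}=\sum_i \sigma_i^{(n)}u_i^{(n)}v_i^{(n)*}$ and $w_i := (\sigma_i^{(n)2}+\epsilon^{(n)2})^{\frac{2-p}{2}}$ one has $M=\sum_{i,j}\frac{2}{w_i+w_j}\langle u_i^{(n)},X_0 v_j^{(n)}\rangle\,u_i^{(n)}v_j^{(n)*}$, so $\|M\|_{S_2}^2=\sum_{i,j}\big(\tfrac{2}{w_i+w_j}\big)^2|\langle u_i^{(n)},X_0 v_j^{(n)}\rangle|^2$. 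The strategy is to split the index pairs into the four blocks determined by $T=\{1,\dots,r\}$ and $T_c$. Two ingredients control the summands. By Weyl's inequality and \cref{rho} the top singular values satisfy $\sigma_i^{(n)}\ge (1-\zeta)\sigma_r(X_0)$ for $i\le r$, which gives $\tfrac{2}{w_i+w_j}\le w_r^{-1}\le (1-\zeta)^{-(2-p)}\sigma_r(X_0)^{-(2-p)}$ on $T\times T$, while $\tfrac{2}{w_i+w_j}\le (\epsilon^{(n)})^{-(2-p)}$ everywhere. By Wedin's bound \cref{Wedinbound}, applied with $\alpha=\sigma_r(X_0)$ and $\delta=(1-\zeta)\sigma_r(X_0)$ (admissible since $\sigma_{r+1}(X^{(n)})\le \|\eta^{(n)}\|_{S_\infty}\le \zeta\sigma_r(X_0)$ by Weyl and \cref{rho}), the cross-alignments obey $\|U_{T_c}^{(n)*}A\|_{S_\infty},\|B^*V_{T_c}^{(n)}\|_{S_\infty}\le \sqrt{2}\,\|\eta^{(n)}\|_{S_\infty}/((1-\zeta)\sigma_r(X_0))$, where $X_0=A\Sigma_0 B^*$ is the singular value decomposition of the rank-$r$ matrix $X_0$.

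The third step is the block-by-block summation, organised so that each block falls under the claimed constant. The delicate block is $T_c\times T_c$: its prefactor blows up like $(\epsilon^{(n)})^{-(2-p)}$, but $\|U_{T_c}^{(n)*}X_0 V_{T_c}^{(n)}\|_{S_2}=\|(U_{T_c}^{(n)*}A)\Sigma_0(B^*V_{T_c}^{(n)})\|_{S_2}$ carries \emph{two} Wedin factors and is therefore of order $\|\eta^{(n)}\|_{S_\infty}^2$; combined with $\epsilon^{(n)}\le \|\eta^{(n)}\|_{S_\infty}$ this block is comfortably dominated. The $T\times T$ block produces the genuine main term $\lesssim \sqrt{r}\,\sigma_r(X_0)^{p-1}$, coming from the near-diagonal coefficients $\sigma_i^{(n)}/w_i\le \sigma_i^{(n)\,p-1}$, and carries \emph{no} smallness; it is absorbed into the target bound precisely because the remaining factors $\kappa^p$, $(1-\zeta)^{-2p}$ and $(\|\eta^{(n)}\|_{S_\infty}/\epsilon^{(n)})^{p(2-p)}$ are all $\ge 1$, the last being guaranteed by $\epsilon^{(n)}\le \|\eta^{(n)}\|_{S_\infty}$ (forced by the update \cref{epsmin} with $\widetilde r=r$). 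Summing the four blocks gives $\|M\|_{S_2}\le \tfrac{4\sqrt{r}\,\sigma_r(X_0)^{p-1}\kappa}{(1-\zeta)^2}(\|\eta^{(n)}\|_{S_\infty}/\epsilon^{(n)})^{2-p}$, and raising the key identity to the power $p$ yields (i). For (ii) I would additionally pass from Frobenius to nuclear norm block-wise via $\|M_{\mathrm{block}}\|_{S_1}\le \sqrt{\rank}\,\|M_{\mathrm{block}}\|_{S_2}$, with the block ranks bounded by $r$ on the mixed blocks and by $d-r$ on $T_c\times T_c$; this is the source of the extra factor $\max(r,d-r)^{p/2}$ and of the enlarged constant $7^p$. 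I expect the main obstacle to be not any single inequality but the bookkeeping of this last step: keeping the prefactor growth $(\epsilon^{(n)})^{-(2-p)}$ matched against the quadratic Wedin smallness on $T_c\times T_c$, and verifying that the unavoidable non-small $T\times T$ contribution remains below the stated constants.
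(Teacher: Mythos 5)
Your proposal follows essentially the same route as the paper's own proof: the optimality identity $\|\eta^{(n+1)}_{\vecc}\|^2_{\ell_2(\widetilde{W}^{(n)})} = -\langle [\widetilde{W}^{(n)}(X_0)_{\vecc}]_{\mat},\eta^{(n+1)}\rangle_F$ from \cref{JminX_prop}, Cauchy--Schwarz (resp.\ Schatten $S_1$/$S_\infty$ duality) for (i) (resp.\ (ii)), the Hadamard-product representation of $[\widetilde{W}^{(n)}(X_0)_{\vecc}]_{\mat}$ split into the four $T$/$T_c$ blocks, Weyl plus \cref{rho} for the $T\times T$ prefactors, Wedin's bound with $\alpha=\sigma_r(X_0)$, $\delta=(1-\zeta)\sigma_r(X_0)$ on $T_c\times T_c$, the absorption via $\epsilon^{(n)}\le\|\eta^{(n)}\|_{S_\infty}$, and the block-wise $\sqrt{\rank}$ passage from $S_2$ to $S_1$ yielding the $\max(r,d-r)^{p/2}$ factor. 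The only cosmetic deviation is your (implicit) use of Wedin factors on the mixed blocks, where the paper instead substitutes $X_0 = X^{(n)}-\eta^{(n)}$ and exploits $U_T^{(n)*}X^{(n)}V_{T_c}^{(n)}=0$; both yield bounds compatible with the stated constants.
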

\begin{proof}[Proof of \cref{lemma_localrate_2}]
(i) Recall that $X^{(n+1)} = \argmin\limits_{\Phi(X)=Y} \|X_{\vecc}\|^2_{\ell_2(\widetilde{W}^{(n)})}$ is the minimizer of the weighted least squares problem with weight matrix $\widetilde{W}^{(n)}$. As $\eta^{(n+1)} = X^{(n+1)}-X_0$ is in the null space of the measurement map $\Phi$, it follows from \cref{JminX_prop} that
\[
0=\langle \widetilde{W}^{(n)}X^{(n+1)}_{\vecc},\eta^{(n+1)}_{\vecc}\rangle=\langle \widetilde{W}^{(n)}(\eta^{(n+1)}+X_{0})_{\vecc},\eta^{(n+1)}_{\vecc}\rangle,
\]
which is equivalent to 
\begin{equation*} 
\left\|\eta^{(n+1)}_{\vecc}\right\|^{2}_{\ell_2(\widetilde{W}^{(n)})}=\langle \widetilde{W}^{(n)}\eta^{(n+1)}_{\vecc},\eta^{(n+1)}_{\vecc}\rangle=-\langle \widetilde{W}^{(n)}(X_{0})_{\vecc},\eta^{(n+1)}_{\vecc}\rangle.
\end{equation*}
Using H\"older's inequality, we can therefore estimate
\begin{equation} \label{proof_superlinearcvg_2}
\begin{split}
\left\|\eta^{(n+1)}_{\vecc}\right\|^{2}_{\ell_2(\widetilde{W}^{(n)})}&=-\langle \widetilde{W}^{(n)}(X_{0})_{\vecc},\eta^{(n+1)}_{\vecc}\rangle_{\ell_2} = -\langle [\widetilde{W}^{(n)}(X_{0})_{\vecc}]_{\mat},\eta^{(n+1)}\rangle_{F} \\
&\leq \big\|\big[\widetilde{W}^{(n)}(X_{0})_{\vecc}\big]_{\mat}\big\|_{S_2}\|\eta^{(n+1)}\|_{S_2}.
\end{split}
\end{equation}
To bound the first factor, we first rewrite the action of $\widetilde{W}^{(n)}$ on $X_0$ in the matrix space as
\begin{equation*}
\begin{split}
\left[\widetilde{W}^{(n)}\!(X_{0})_{\vecc}\right]_{\mat}\!\!\!\! &=  2[(\widetilde{V}^{(n)}\! \otimes \widetilde{U}^{(n)})\big(\overline{\Sigma}_{d_1}^{(n)2 -p}\! \oplus \overline{\Sigma}_{d_2}^{(n)2 -p}\big)^{-1} (\widetilde{V}^{(n)}\! \otimes \widetilde{U}^{(n)})^* (X_{0})_{\vecc}]_{\mat}\! = \\
&=\widetilde{U}^{(n)}\big(H^{(n)} \circ (\widetilde{U}^{(n)*} X_0 \widetilde{V}^{(n)})\big)\widetilde{V}^{(n)*},
\end{split}
\end{equation*}
using \cref{Wtilde_seconddef} and \cref{lemma_localrate_1} about the action of inverses of Kronecker sums, with the notation that $H^{(n)} \in M_{d_1 \times d_2}$ such that
\begin{equation*}
H^{(n)}_{ij}=2 \left[\mathds{1}_{\{i \leq d\}}(\sigma_i^{2}(X^{(n)})+\epsilon^{(n)2})^{\frac{2-p}{2}}+\mathds{1}_{\{j \leq d\}}(\sigma_j^{2}(X^{(n)})+\epsilon^{(n)2})^{\frac{2-p}{2}}\right]^{-1}
\end{equation*}
for $i \in [d_1]$, $j \in [d_2]$, where $\mathds{1}_{\{i \leq d\}} = 1$ if $i \leq d$ and $\mathds{1}_{\{i \leq d\}} = 0$ otherwise. This enables us to estimate
\begin{equation*}
\resizebox{ \textwidth}{!} 
{$
\left\|\big[\widetilde{W}^{(n)}(X_{0})_{\vecc}\big]_{\mat}\right\|_{S_2}^2 \!\!\! =\! \left\|\widetilde{U}^{(n)}\big(H^{(n)}\!\! \circ (\widetilde{U}^{(n)*} X_0 \widetilde{V}^{(n)})\big)\widetilde{V}^{(n)*} \right\|_{S_2}^2 \!\!\! =\! \left\|H^{(n)}\!\!\circ (\widetilde{U}^{(n)^*} \!X_0 \widetilde{V}^{(n)})\right\|_{S_2}^2$}
\end{equation*}
\begin{equation} \label{eq_WtildeX0}
\begin{split}
\phantom{\quad\quad\quad\quad\quad\;}&= \left\|H^{(n)}\circ \begin{pmatrix} U^{(n)*}_T X_0 V^{(n)}_{T} & U^{(n)*}_T X_0 V^{(n)}_{T_c} \\ U^{(n)^*}_{T_c}X_0 V^{(n)}_{T}& U^{(n)*}_{T_c}X_0 V^{(n)}_{T_c}\end{pmatrix} \right\|_{S_2}^2 \\
&= \left\|H^{(n)}_{T,T}\circ  (U^{(n)*}_T X_0 V^{(n)}_{T})\right\|_{S_2}^2 +\left\|H^{(n)}_{T,T_c}\circ (U^{(n)*}_T X_0 V^{(n)}_{T_c})\right\|_{S_2}^2 \\  
&+ \left\|H^{(n)}_{T_c,T}\circ(U^{(n)*}_{T_c} X_0 V^{(n)}_{T})\right\|_{S_2}^2+\left\|H^{(n)}_{T_c,T_c}\circ(U^{(n)*}_{T_c} X_0 V^{(n)}_{T_c})\right\|_{S_2}^2,
\end{split}
\end{equation}
using the notation from the proof of \cref{lemma_localrate_1}. To bound the first summand, we calculate
\begin{equation*} 
\begin{split}
&\left\|H^{(n)}_{T,T}\!\circ\!  (U^{(n)*}_T\! X_0 V^{(n)}_{T})\right\|_{S_2} \!\! \leq \! \left\|H^{(n)}_{T,T}\!\circ\!  (U^{(n)*}_T\! X^{(n)} V^{(n)}_{T})\right\|_{S_2} \!\!\! +\! \left\|H^{(n)}_{T,T}\!\circ\!  (-U^{(n)*}_T \eta^{(n)} V^{(n)}_{T})\right\|_{S_2} \\
&\leq \left\|H^{(n)}_{T,T} \circ \Sigma_T^{(n)}\right\|_{S_2} +  \left\|H^{(n)}_{T,T}\circ  (U^{(n)*}_T \eta^{(n)} V^{(n)}_{T})\right\|_{S_2} \\
&\leq \bigg(\sum_{i=1}^r \frac{ \sigma_i^2 (X^{(n)})}{\big(\sigma_i^{2}(X^{(n)})+\epsilon^{(n)2}\big)^{2-p}}\bigg)^{1/2}  + \max_{i,j=1}^r |H_{i,j}^{(n)}| \|U^{(n)*}_T \eta^{(n)} V^{(n)}_{T}\|_{S_2} \\
&\leq \sqrt{r} \sigma_r^{p-1}(X^{(n)}) + (\sigma_r^{2}(X^{(n)})+\epsilon^{(n)2})^{\frac{p-2}{2}} \|U^{(n)*}_T \eta^{(n)} V^{(n)}_{T}\|_{S_2} \\
&\leq \sqrt{r} \sigma_r^{p-1}(X^{(n)}) +\sigma_r^{p-2}(X^{(n)})\sqrt{r} \| \eta^{(n)}\|_{S_\infty} = \sqrt{r} \sigma_r^{p-2}(X^{(n)}) \big[\sigma_r(X^{(n)})+ \|\eta^{(n)}\|_{S_\infty} \big],
\end{split}
\end{equation*}
denoting $\Sigma_T^{(n)} = \diag(\sigma_i (X^{(n)}))_{i=1}^r$ and that the matrices  $U_T^{(n)}$ and $V_T^{(n)}$ contain the first $r$ left resp. right singular vectors of $X^{(n)}$ in the second inequality, together with the estimates $\|X\|_{S_1} \leq \sqrt{r} \|X\|_{S_2} \leq r \|X\|_{S_\infty}$ for $(r \times r)$-matrices $X$.

With the notations $s_r^0 := \sigma_r (X_0)$ and $s_1^0 := \sigma_1 (X_0)$, we note that 
\[
\sigma_r (X^{(n)}) \geq s_r^0 (1-\zeta),
\]
as the assumption \cref{rho} implies that 
\begin{equation*}
s_r^0= \sigma_r(X_0) = \sigma_r(X^{(n)}-\eta^{(n)}) \leq \sigma_r(X^{(n)}) + \sigma_1(\eta^{(n)}) \leq \sigma_r(X^{(n)})+\zeta s_r^0,
\end{equation*}
using \citep[Proposition 9.6.8]{Bernstein09} in the first inequality.

Therefore, we can bound the first summand of \cref{eq_WtildeX0} such that
\begin{equation} \label{eq_H_TT}
\left\|H^{(n)}_{T,T}\circ  (U^{(n)*}_T X_0 V^{(n)}_{T})\right\|_{S_2} \leq \sqrt{r} (s_r^0 (1-\zeta))^{p-2}[s_r^0 (1-\zeta) + \zeta s_r^0] = \sqrt{r} (s_r^0)^{p-1} (1-\zeta)^{p-2}. 
\end{equation}
For the second summand in the estimate of $\left\|\big[\widetilde{W}^{(n)}(X_{0})_{\vecc}\big]_{\mat}\right\|_{S_2}^2$, similar arguments and again assumption \cref{rho} are used to compute \vspace{-2mm}
\begin{equation} \label{eq_H_TTc}
\begin{split}
&\left\|H^{(n)}_{T,T_c}\circ (U^{(n)*}_T X_0 V^{(n)}_{T_c})\right\|_{S_2} \leq \Big\|H^{(n)}_{T,T_c}\circ \overbrace{(U^{(n)*}_T X^{(n)} V_{T_c}^{(n)})}^{=0}\Big\|_{S_2} + \\
& \left\|H^{(n)}_{T,T_c}\circ (U^{(n)*}_T \eta^{(n)} V_{T_c}^{(n)})\right\|_{S_2} \leq \max_{\substack{i\in[r] \\ j\in \{r+1,\ldots,d_2\}}} |H_{i,j}^{(n)}| \|U^{(n)*}_T \eta^{(n)} V_{T_c}^{(n)}\|_{S_2} \\
& \leq  \frac{2 \|U^{(n)*}_T \eta^{(n)} V_{T_c}^{(n)}\|_F}{\big[(\sigma_r (X^{(n)})^2 + \epsilon^{(n)2})^{\frac{2-p}{2}}\big]} \leq 2 \sigma_r (X^{(n)})^{p-2} \|U^{(n)*}_T \eta^{(n)} V_{T_c}^{(n)}\|_{S_2} \\
&\leq 2\sqrt{r}(s_r^0 (1-\zeta))^{p-2} \|\eta^{(n)}\|_{S_\infty} \leq 2\zeta \sqrt{r} (s_r^0)^{p-1} (1-\zeta)^{p-2}.
\end{split}
\end{equation}
From exactly the same arguments it follows that also
\begin{equation} \label{eq_H_TcT}
\left\|H^{(n)}_{T_c,T}\circ(U^{(n)*}_{T_c} X_0 V^{(n)}_{T})\right\|_{S_2} \leq 2\zeta \sqrt{r} (s_r^0)^{p-1} (1-\zeta)^{p-2}.
\end{equation}
It remains to bound the last summand $\left\|H^{(n)}_{T_c,T_c}\circ(U^{(n)*}_{T_c} X_0 V^{(n)}_{T_c})\right\|_{S_2}^2$. We see that 
\begin{equation} \label{eq_H_TcTc}
\begin{split}
&\left\|H^{(n)}_{T_c,T_c}\circ(U^{(n)*}_{T_c} X_0 V^{(n)}_{T_c})\right\|_{S_2} \leq \max_{\substack{i\in\{r+1,\ldots,d_1\} \\ j\in \{r+1,\ldots,d_2\}}} \big|H_{i,j}^{(n)}\big| \|U^{(n)*}_{T_c} X_0 V^{(n)}_{T_c}\|_{S_2} \\
&\leq (\epsilon^{(n)})^{p-2} \|U^{(n)*}_{T_c} X_0 V^{(n)}_{T_c}\|_{S_2} \leq  (\epsilon^{(n)})^{p-2} \| U_{T_c}^{(n)*} U_T^0\|_{S_\infty} \|S^0\|_{S_2} \|V_T^{0*} V_{T_c}^{(n)}\|_{S_\infty} \\
&\leq (\epsilon^{(n)})^{p-2} \frac{\sqrt{2}\|\eta^{(n)}\|_{S_\infty}}{(1-\zeta) s_r^0} \sqrt{r} s_1^0 \frac{\sqrt{2}\|\eta^{(n)}\|_{S_\infty}}{(1-\zeta) s_r^0} = 2\sqrt{r} \|\eta^{(n)}\|_{S_\infty}^2(\epsilon^{(n)})^{p-2} (1-\zeta)^{-2} (s_r^0)^{-1} \frac{s_1^0}{s_r^0}   \\
 \end{split}
\end{equation}
where H\"older's inequality for Schatten norms was used in the third inequality. In the fourth inequality, Wedin's singular value perturbation bound of \cref{Wedinbound} is used with the choice $Z= X_0$, $\overline{Z}= X^{(n)}$, $\alpha= s_r^0$ and $\delta=(1-\zeta)s_r^0$, and finally $\epsilon^{(n)} \leq \zeta s_r^0$ in the last inequality, which is implied by the rule \cref{epsmin} for $\epsilon^{(n)}$ together with assumption \cref{rho}.

Summarizing the estimates \cref{eq_H_TT,eq_H_TTc,eq_H_TcT,eq_H_TcTc}, we conclude that
\begin{equation*}
\begin{split}
&\left\|\big[\widetilde{W}^{(n)}(X_{0})_{\vecc}\big]_{\mat}\right\|_{S_2}^2 \leq \frac{r(s_r^0)^{2p-2}}{(1-\zeta)^{4-2p}} \bigg[1+8\zeta^2 +  4 \frac{\|\eta^{(n)}\|_{S_\infty}^{4}}{(1-\zeta)^{2p}} (\epsilon^{(n)})^{2p-4}(s_r^0)^{-2p} \left(\frac{s_1^0}{s_r^0}\right)^2 \bigg] \\
&= \frac{r (s_r^0)^{2p-2}}{(1-\zeta)^4} \bigg[(1+8\zeta^2)(1-\zeta)^{2p} + 4 \frac{\|\eta^{(n)}\|_{S_\infty}^{4-2p}}{(\epsilon^{(n)})^{4-2p}}\frac{{\|\eta^{(n)}\|_{S_\infty}^{2p}}}{(s_r^0)^{2p}} \left(\frac{s_1^0}{s_r^0}\right)^2 \bigg] \\
&\leq \frac{r (s_r^0)^{2p-2}}{(1-\zeta)^4} \bigg[ 9+ 4  \frac{\|\eta^{(n)}\|_{S_\infty}^{4-2p}}{(\epsilon^{(n)})^{4-2p}} \zeta^{2p} \kappa^2\bigg]  \leq \frac{13 r (s_r^0)^{2p-2}}{(1-\zeta)^4} \bigg[ \frac{\|\eta^{(n)}\|_{S_\infty}^{4-2p}}{(\epsilon^{(n)})^{4-2p}} \kappa^2\bigg],
\end{split}
\end{equation*}
as $0< \zeta <1$, $\epsilon^{(n)} \leq \sigma_{r+1}(X^{(n)}) = \|X_{T_c}^{(n)}\|_{S_\infty} \leq \|\eta^{(n)}\|_{S_\infty}$ and using the assumption \cref{rho} in the second inequality. This concludes the proof of \cref{lemma_localrate_2}(i) together with inequality \cref{proof_superlinearcvg_2} as $13^{p/2} \leq 16^{p/2} = 4^p$.

(ii) For the second statement of \cref{lemma_localrate_2}, we proceed similarly as before, but note that by Hölder's inequality, also 
\begin{equation}
\left\|\eta^{(n+1)}_{\vecc}\right\|^{2}_{\ell_2(\widetilde{W}^{(n)})} \leq \big\|\big[\widetilde{W}^{(n)}(X_{0})_{\vecc}\big]_{\mat}\big\|_{S_1}\|\eta^{(n+1)}\|_{S_\infty},
\end{equation}
cf. \cref{proof_superlinearcvg_2}. Furthermore
\begin{equation}
\begin{split}
\big\|\big[\widetilde{W}^{(n)}(X_{0})_{\vecc}\big]_{\mat}\big\|_{S_1} &\leq \left\|H^{(n)}_{T,T}\circ  (U^{(n)*}_T X_0 V^{(n)}_{T})\right\|_{S_1} +\left\|H^{(n)}_{T,T_c}\circ (U^{(n)*}_T X_0 V^{(n)}_{T_c})\right\|_{S_1} \\
&+ \left\|H^{(n)}_{T_c,T}\circ(U^{(n)*}_{T_c} X_0 V^{(n)}_{T})\right\|_{S_1}+\left\|H^{(n)}_{T_c,T_c}\circ(U^{(n)*}_{T_c} X_0 V^{(n)}_{T_c})\right\|_{S_1}.
\end{split}
\end{equation}
The four Schatten-$1$ norms can then be estimated by $\max(r,(d-r))^{1/2}$ times the corresponding Schatten-$2$ norms. Using then again inequalities $\cref{eq_H_TT,eq_H_TTc,eq_H_TcT,eq_H_TcTc}$, we conclude the proof of (ii).
\end{proof}

\begin{proof}[Proof of \cref{rate}]
First we note that
\begin{equation} \label{eq_XnTcterm}
\left(\sum_{i=r+1}^{d} \big(\sigma_i ^2(X^{(n)}) + \epsilon^{(n)2}\big)^{\frac{p}{2}} \right)^{2-p} 
\leq  2^{p-\frac{p^2}{2}}(d-r)^{2-p} \sigma_{r+1}(X^{(n)})^{p(2-p)}
\end{equation}
as $\epsilon^{(n)}\leq \sigma_{r+1}(X^{(n+1)})$ due to the choice of $\epsilon^{(n)}$ in \cref{epsmin}. We proceed by induction over $n \geq \overline{n}$. \cref{lemma_localrate_1}(ii) and \cref{lemma_localrate_2}(ii) imply together with \cref{eq_XnTcterm} that for $n=\overline{n}$,
\begin{equation} \label{eq_etaSinfdecay}
\begin{split}
\|\eta^{(n+1)}\|_{S_\infty}^p \leq \frac{\|\eta^{(n+1)}\|_{S_2}^{2p}}{\|\eta^{(n+1)}\|_{S_\infty}^{p}} &\leq  2^p  \gamma_{2r}^{2-p} 2^{p-\frac{p^2}{2}}\Big(\frac{d-r}{r}\Big)^{2-p/2}\frac{7^p r^{p} (s_r^0)^{p(p-1)}}{(1-\zeta)^{2p}} \kappa^p \|\eta^{(n)}\|_{S_\infty}^{2p-p^2} \\
&\leq 2^{5p}  \gamma_{2r}^{2-p} \Big(\frac{d-r}{r}\Big)^{2-p/2}\frac{r^{p} (s_r^0)^{p(p-1)}}{(1-\zeta)^{2p}} \kappa^p \|\eta^{(n)}\|_{S_\infty}^{p(2-p)}
\end{split}
\end{equation}
as $\sigma_{r+1}(X^{(n)}) = \epsilon^{(n)}$ by assumption for $n = \overline{n}$.

Similarly, by \cref{lemma_localrate_1}(iii), \cref{lemma_localrate_2}(ii) and \cref{eq_XnTcterm}, the error in the Schatten-$p$ quasinorm fulfills
\begin{equation} \label{eq_etanp1Sp}
\begin{split}
\|\eta^{(n+1)}\|_{S_p}^{2p} &\leq (1+\gamma_{2r})^2 2^{2+2p} \big(d-r\big)^{2-p} \frac{r^{p/2}(s_r^0)^{p(p-1)}}{(1-\zeta)^{2p}}\kappa^p \|\eta^{(n)}\|_{S_\infty}^{p(2-p)}\|\eta^{(n+1)}\|_{S_2}^{p} \end{split}
\end{equation}
for $n = \overline{n}$. Using the strong Schatten-$p$ null space property of order $2r$ for the operator $\Phi$, we see from the arguments of the proof of \cref{lemma_localrate_1}(ii) that
\begin{equation*}
\|\eta^{(n)}\|_{S_\infty}^{p} \leq \|\eta^{(n)}\|_{S_2}^{p} \leq \frac{2^{p-1} \gamma_{2r}^{1-p/2}}{r^{1-p/2}}\|\eta^{(n)}\|_{S_p}^p
\end{equation*}
and also $\|\eta^{(n+1)}\|_{S_2}^{p} \leq \frac{2^{p-1} \gamma_{2r}^{1-p/2}}{r^{1-p/2}}\|\eta^{(n+1)}\|_{S_p}^p$. Inserting that in \cref{eq_etanp1Sp} and dividing by $\|\eta^{(n+1)}\|_{S_p}^p$, we obtain
\begin{equation*}
\begin{split}
\|\eta^{(n+1)}\|_{S_p}^{p} &\leq 2^{4p} (1+\gamma_{2r})^2 \gamma_{2r}^{2-p} \Big(\frac{d-r}{r}\Big)^{2-p} \frac{r^{p/2} (s_r^0)^{p(p-1)}}{(1-\zeta)^{2p}} \kappa^p \|\eta^{(n)}\|_{S_\infty}^{p(1-p)}\|\eta^{(n)}\|_{S_p}^p.
\end{split}
\end{equation*}
Under the assumption that \cref{eq_mu_condition2} holds, it follows from this and \cref{eq_etaSinfdecay} that
\begin{equation} \label{eq_etadecay}
\|\eta^{(n+1)}\|_{S_\infty}^{p} \leq \|\eta^{(n)}\|_{S_\infty}^p \text{ and } \|\eta^{(n+1)}\|_{S_p}^{p} \leq \|\eta^{(n)}\|_{S_p}^p
\end{equation}
for $n = \overline{n}$, which also entails the statement of \cref{rate} for this iteration.

Let now $n' > \overline{n}$ such that \cref{eq_etadecay} is true for all $n$ with $n' > n \geq \overline{n}$. If $\sigma_{r+1}(X^{(n')}) \leq \epsilon^{(n'-1)}$, then $\epsilon^{(n')} = \sigma_{r+1}(X^{(n')})$ and the arguments from above show \cref{eq_etadecay} also for $n = n'$.

Otherwise $\sigma_{r+1}(X^{(n')}) > \epsilon^{(n'-1)}$ and there exists $n' > n'' \geq \overline{n}$ such that $\epsilon^{(n')}=\epsilon^{(n'')}=\sigma_{r+1}(X^{(n'')})$. Then
\begin{equation*}
\|\eta^{(n'+1)}\|_{S_\infty}^p\!\!\! \leq \! 14^p  \frac{\gamma_{2r}^{2-p}}{r^{2-p}} \bigg[\sum_{i=r+1}^{d}\!\!\! \Big(\frac{\sigma_i ^2(X^{(n')})}{\epsilon^{(n'')2}} +1\Big)^{\frac{p}{2}} \bigg]^{\!2-p} \frac{ r^{p/2}\max(r,d-r)^{p/2}}{(s_r^0)^{p(1-p)}(1-\zeta)^{2p}} \kappa^p \|\eta^{(n')}\|_{S_\infty}^{p(2-p)}
\end{equation*}
and we compute
\begin{equation*}
\begin{split}
&\bigg[\sum_{i=r+1}^{d} \Big(\frac{\sigma_i ^2(X^{(n')})}{\epsilon^{(n'')2}} +1\Big)^{\frac{p}{2}} \bigg]^{2-p} \leq \bigg[\sum_{i=r+1}^{d} \frac{\sigma_i ^p(X^{(n')})}{\epsilon^{(n'')p}}+(d-r)\bigg]^{2-p} \\
&\leq \bigg[\frac{\|\eta^{(n')}\|_{S_p}^p}{\epsilon^{(n'')p}}+(d-r)\bigg]^{2-p} \leq \bigg[\frac{\|\eta^{(n'')}\|_{S_p}^p}{\epsilon^{(n'')p}}+(d-r)\bigg]^{2-p}  \\
&\leq \bigg[\frac{2(1+\gamma_{2r})\|X_{T_c}^{(n'')}\|_{S_p}^p}{(1-\gamma_{2r})\epsilon^{(n'')p}}+(d-r)\bigg]^{2-p} \leq \bigg(\frac{3+\gamma_{2r}}{1-\gamma_{2r}}\bigg)^{2-p}(d-r)^{2-p},
\end{split}
\end{equation*}
using that $X_0$ is a matrix of rank at most $r$ in the second inequality, the inductive hypothesis in the third and an analogue of \cref{eq_schatten_2_NSPestimate} for a Schatten-$p$ quasinorm on the left hand side (cf. \citep[Lemma 3.2]{Kabanava15} for the corresponding result for $p=1$) in the last inequality. The latter argument uses the assumption on the null space property. This shows that 
\begin{equation*}
\|\eta^{(n'+1)}\|_{S_\infty}^p \leq \mu \|\eta^{(n')}\|_{S_\infty}^{p(2-p)}
\end{equation*}
for 
\begin{equation*}
\widetilde{\mu} := 2^{4p}  \gamma_{2r}^{2-p} \Big(\frac{(3+\gamma_{2r})(d-r)}{(1-\gamma_{2r})r}\Big)^{2-p}\frac{r^{p/2} (s_r^0)^{p(p-1)}}{(1-\zeta)^{2p}} \kappa^p \max\left(2^p (d-r)^{\frac{p}{2}},(1+\gamma_{2r})^2 \right),
\end{equation*}
and $\|\eta^{(n'+1)}\|_{S_\infty}^p \leq \|\eta^{(n')}\|_{S_\infty}^{p}$ under the assumption \cref{eq_mu_condition2} of \cref{rate}, as $\widetilde{\mu} \leq \mu$ with $\mu$ as in \cref{eq_mu_definition}. Indeed $\widetilde{\mu} \leq \mu$ since 
\begin{equation*}
\max\left(2^p (d-r)^{\frac{p}{2}},(1+\gamma_{2r})^2 \right) \Big(\frac{d-r}{r}\Big)^{2-p} r^{p/2} \leq 2^p (1+\gamma_{2r})^2 \Big(\frac{d-r}{r}\Big)^{2-p/2} r^p.
\end{equation*}
The same argument shows that $\|\eta^{(n'+1)}\|_{S_p}^p \leq \|\eta^{(n')}\|_{S_p}^{p}$, which finishes the proof.
\end{proof}
%
%
%

 \begin{remark} \label{remark_Mohanlaueftnicht}
We note that the weight matrices of the previous IRLS approaches \texttt{IRLS-col} and \texttt{IRLS-row} \citep{Fornasier11,Mohan10} at iteration $n$ could be expressed in our notation as 
\begin{equation*}
  \mathbf{I}_{d_2} \otimes W_L^{(n)} := \mathbf{I}_{d_2}  \otimes U^{(n)}(\overline{\Sigma}_{d_1}^{(n)})^{p-2}U^{(n)*}
\end{equation*}
 and 
\begin{equation*}
W_R^{(n)} \otimes  \mathbf{I}_{d_1} := V^{(n)}(\overline{\Sigma}_{d_2}^{(n)})^{p-2}V^{(n)*} \otimes \mathbf{I}_{d_1},
\end{equation*}
 respectively, cf. \cref{sec_weightedFrob}, if 
$X^{(n)} = U^{(n)} \Sigma^{(n)} V^{(n)*} = U_T^{(n)}  \Sigma_T^{(n)} V_T^{(n)*}  + U_{T_c}^{(n)} \Sigma_{T_c}^{(n)}V_{T_c}^{(n)*} $
is the SVD of the iterate $X^{(n)}$ with $U_T^{(n)}$ and $V_T^{(n)}$ containing the $r$ first left- and right singular vectors.

Now let 
\begin{equation*}
T^{(n)}:=\big\{ U_T^{(n)} Z_1^{*} +  Z_2 V_T^{(n)*} : Z_1 \in M_{d_1 \times r}, Z_2 \in M_{d_2 \times r} \big\}
\end{equation*}
be the tangent space of the smooth manifold of rank-$r$ matrices at the best rank-$r$ approximation $U_T^{(n)}  \Sigma_T^{(n)} V_T^{(n)*} $ of $X^{(n)}$, or, put differently, the direct sum of the row and column spaces of $U_T^{(n)}  \Sigma_T^{(n)} V_T^{(n)*}$.
 
The fact that left- or right-sided weight matrices do not lead to algorithms with superlinear convergence rates for $p < 1$ can be explained by noting that there are always parts of the space $T^{(n)}$ that are equipped with too large weights if $X^{(n)} = U^{(n)}\Sigma^{(n)}V^{(n)*}$ is already approximately low-rank. In particular, proceeding as in \cref{eq_WtildeX0}, we obtain for $\mathbf{I}_{d_2} \otimes W_L^{(n)}$
\begin{equation*}
\begin{split}
\left\|\big[\mathbf{I}_{d_2} \otimes W_L^{(n)}(X_{0})_{\vecc}\big]_{\mat}\right\|_{S_2}^2 \!\! = &\Big\|\big(\overline{\Sigma}_T^{(n)}\big)^{p-2} U^{(n)*}_T\! X_0 V^{(n)}_{T}\Big\|_{S_2}^2\!\! +\left\|\big(\overline{\Sigma}_T^{(n)}\big)^{p-2}U^{(n)*}_T\! X_0 V^{(n)}_{T_c}\right\|_{S_2}^2 \\
+ \bigg\|\big(\overline{\Sigma}_{T_c}&^{(n)}\big)^{p-2}U^{(n)*}_{T_c} X_0 V^{(n)}_{T}\bigg\|_{S_2}^2\!\!+\left\|\big(\overline{\Sigma}_{T_c}^{(n)}\big)^{p-2}U^{(n)*}_{T_c} X_0 V^{(n)}_{T_c}\right\|_{S_2}^2
\end{split}
\end{equation*}
if $\overline{\Sigma}_T^{(n)}$ denotes the diagonal matrix with the first $r$ non-zero entries of $\overline{\Sigma}_{d_1}^{(n)}$ and $\overline{\Sigma}_{T_c}^{(n)}$ the one of the remaining entries. 

Here, the third of the four summands would become too large for $p<1$ to allow for a superlinear convergence when the last $d-r$ singular values of $X^{(n)}$ approach zero. An analogous argument can be used for the right-sided weight matrix $W_R^{(n)} \otimes  \mathbf{I}_{d_1}$ and, notably, also for arithmetic mean weight matrices $W_{\text{(arith)}}^{(n)} = \mathbf{I}_{d_2}\otimes {W}_L^{(n)}+{W}_R^{(n)}\otimes \mathbf{I}_{d_1}$, cf. \cref{sec_avg_weightmatrices}. 
\end{remark}

\section{Acknowledgments}
The two authors acknowledge the support and hospitality of the Hausdorff Research Institute for Mathematics (HIM) during the early stage of this work within the HIM Trimester Program "Mathematics of Signal Processing''.
C.K. is supported by the German Research Foundation (DFG) in the context of the Emmy Noether Junior Research Group ``Randomized Sensing and Quantization of Signals and Images'' (KR 4512/1-1) and the ERC Starting Grant ``High-Dimensional Sparse Optimal Control'' (HDSPCONTR - 306274). J.S. is supported by the DFG through the D-A-CH project no. I1669-N26 and through the international research training group IGDK 1754 ``Optimization and Numerical Analysis for Partial Differential Equations with Nonsmooth Structures''.
 The authors thank Ke Wei for providing code of his implementations. They also thank Massimo Fornasier for helpful discussions. 



\begin{appendix}

\section{Kronecker and Hadamard products} \label{sec_kroneckerhadamard}
For two matrices $A=(a_{ij})_{i\in[d_1],j\in[d_3]} \in \C^{d_1\times d_3}$ and $B \in \C^{d_2\times d_4}$, we call the matrix representation of their tensor product with respect to the standard bases the \emph{Kronecker product} $A \otimes B \in \C^{d_1\cdot d_2 \times d_3 \cdot d_4}$. By its definition, $A \otimes B$ is a block matrix of $d_2 \times d_4$ blocks whose block of index $(i,j)\in [d_1]\times[d_3]$ is the matrix $a_{ij} B\in \R^{d_2 \times d_4}$. This implies, e.g., for $A \in \C^{d_1 \times d_3}$ with $d_1=2$ and $d_3 =3$ that
\begin{equation*} A \otimes B =
\left[\begin{array}{ccc}
a_{11} & a_{12} & a_{13}\\
a_{21} &a_{22} & a_{23}\\
\end{array}
\right] \otimes B =  \left[\begin{array}{c|c|c}a_{11}B & a_{12}B & a_{13}B \\ \hline a_{21}B & a_{22}B & a_{23}B \end{array}\right].
\end{equation*}

The Kronecker product is useful for the elegant formulation of matrix equations involving left and right matrix multiplications with the variable $X$, as
\begin{equation*} 
AXB^*=Y\text{\hspace*{5mm} if and only if \hspace*{5mm}}(B\otimes A) X_{\vecc}=Y_{\vecc}.
\end{equation*}

We define the \emph{Hadamard product} $A\circ B \in \C^{d_1\times d_2}$ of two matrices $A \in \C^{d_1\times d_2}$ and $B \in \C^{d_1\times d_2}$ as their entry-wise product
\begin{equation*}
(A\circ B)_{i,j}=A_{i,j}B_{i,j}
\end{equation*}
with $i \in [d_1]$ and $j \in [d_2]$. The Hadamard product is also known as \emph{Schur product} in the literature. 

Furthermore, if $d_1=d_3$ and $d_2=d_4$, we define the \emph{Kronecker sum} $A\oplus B \in \C^{d_1 d_2 \times d_1 d_2}$ of two matrices $A \in \C^{d_1\times d_1}$ and $B \in \C^{d_2\times d_2}$ as the matrix 
\begin{equation} \label{def_kronsum}
A\oplus B=(\mathbf{I}_{d_2} \otimes A) +( B \otimes \mathbf{I}_{d_1}).
\end{equation}

Note that equations of the form $AX+XB^*=Y$ can be rewritten as 
\[ (A\oplus B) X_{\vecc}=Y_{\vecc}, \]
using again the vectorizations of $X$ and $Y$.
An explicit formula that expresses the inverse $(A \oplus B)^{-1}$ of the Kronecker sum $A \oplus B$ is provided by the following lemma.
\begin{lemma}[\citep{Jameson68}] \label{lemma_jameson}
Let $A \in H_{d_1 \times d_1}$ and $B \in H_{d_2 \times d_2}$, where one of the matrices is positive definite and the other positive semidefinite. If we denote the singular vectors of $A$ by $u_i \in \C^{d_1}$, $i \in [d_1]$, its singular values by $\sigma_i$, $i \in [d_1]$ and the singular vectors resp. values of $B$ by $v_j \in \C^{d_2}$ resp. $\mu_j$, $j \in [d_2]$, then
\begin{equation}\label{invformula}
(A\oplus B)^{-1}=\sum\limits^{d_1}_{i=1} \sum\limits^{d_2}_{j=1} \frac{v_jv_j^* \otimes u_iu_i^*}{\sigma_i+\mu_j}.
\end{equation}
Furthermore, the action of $(A\oplus B)^{-1}$ on the matrix space $M_{d_1 \times d_2}$ can be written as
\begin{equation}\label{hada}
\left[(A\oplus B)^{-1}Z_{\vecc}\right]_{\mat}= U\big(H\circ (U^*ZV)\big)V^*. 
\end{equation}
for $Z \in M_{d_1 \times d_2}$, $U = [u_1,\ldots,u_{d_1}]$, and $V = [v_1,\ldots,v_{d_2}]$ and the matrix $H\in M_{d_1 \times d_2}$ with the entries $H_{i,j}=(\sigma_i+\mu_j)^{-1}$, $i \in [d_1]$, $ j \in [d_2]$.
\end{lemma}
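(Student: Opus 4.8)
The plan is to prove both formulas by simultaneously diagonalizing $A$ and $B$ and pushing the resulting diagonalization through the Kronecker sum. Since $A$ and $B$ are Hermitian and positive semidefinite (one of them even positive definite), their singular value decompositions coincide with spectral decompositions $A = U\Sigma_A U^*$ and $B = V\Sigma_B V^*$, where $U = [u_1,\ldots,u_{d_1}]$ and $V = [v_1,\ldots,v_{d_2}]$ are unitary and $\Sigma_A = \diag(\sigma_1,\ldots,\sigma_{d_1})$, $\Sigma_B = \diag(\mu_1,\ldots,\mu_{d_2})$ collect the (nonnegative) eigenvalues. Here the singular vectors and values are genuinely the eigenvectors and eigenvalues.

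First I would diagonalize the Kronecker sum. Starting from the definition \cref{def_kronsum}, namely $A \oplus B = (\mathbf{I}_{d_2}\otimes A) + (B \otimes \mathbf{I}_{d_1})$, I would insert the spectral decompositions and apply the mixed-product rule $(P\otimes Q)(R\otimes S) = (PR)\otimes(QS)$ together with $U U^* = \mathbf{I}_{d_1}$ and $V V^* = \mathbf{I}_{d_2}$ to rewrite each summand as a conjugation by the unitary matrix $V\otimes U$. This yields
\[
A \oplus B = (V\otimes U)\,(\Sigma_A \oplus \Sigma_B)\,(V\otimes U)^*,
\]
so that $A\oplus B$ is unitarily similar to the diagonal matrix $\Sigma_A \oplus \Sigma_B$, whose $(i+(j-1)d_1)$-th diagonal entry is $\sigma_i + \mu_j$. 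At this point the hypothesis that one factor is positive definite and the other positive semidefinite enters decisively: it guarantees $\sigma_i + \mu_j > 0$ for every pair $(i,j)$, hence $\Sigma_A \oplus \Sigma_B$, and therefore $A\oplus B$, is invertible.

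With invertibility secured I would simply invert the similarity, $(A\oplus B)^{-1} = (V\otimes U)(\Sigma_A\oplus\Sigma_B)^{-1}(V\otimes U)^*$, expand the diagonal matrix in the standard basis as $(\Sigma_A\oplus\Sigma_B)^{-1} = \sum_{i,j}(\sigma_i+\mu_j)^{-1}(e_j e_j^*)\otimes(e_i e_i^*)$, and conjugate each rank-one term back using the mixed-product rule and $U e_i = u_i$, $V e_j = v_j$. Since $(V\otimes U)\big[(e_j e_j^*)\otimes(e_i e_i^*)\big](V\otimes U)^* = (v_j v_j^*)\otimes(u_i u_i^*)$, this produces \eqref{invformula} directly.

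For the second formula \eqref{hada} I would apply \eqref{invformula} to $Z_{\vecc}$ term by term and re-matricize. The left factor of each Kronecker product is $v_j v_j^*$ and the right factor is $u_i u_i^*$, so the vectorization identity $A X B^* = Y \Leftrightarrow (B\otimes A)X_{\vecc}=Y_{\vecc}$ from the appendix gives $\big[\big((v_j v_j^*)\otimes(u_i u_i^*)\big)Z_{\vecc}\big]_{\mat} = (u_i u_i^*)Z(v_j v_j^*) = (u_i^* Z v_j)\,u_i v_j^*$, using that $v_j v_j^*$ is Hermitian. Summing over $i,j$ yields $\big[(A\oplus B)^{-1}Z_{\vecc}\big]_{\mat} = \sum_{i,j}\frac{u_i^* Z v_j}{\sigma_i+\mu_j}\,u_i v_j^*$, and recognizing $u_i^* Z v_j = (U^* Z V)_{ij}$ and $H_{ij}=(\sigma_i+\mu_j)^{-1}$ identifies this double sum with $U\big(H\circ(U^* Z V)\big)V^*$, which is \eqref{hada}. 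The only real obstacle is bookkeeping: keeping the diagonal ordering in $\Sigma_A\oplus\Sigma_B$ consistent with the column-stacking convention for $\vecc$, and tracking the conjugations carefully enough that the Hermitian rank-one factors collapse to $u_i v_j^*$ rather than a conjugated variant. The positive-(semi)definiteness assumption is used solely to ensure invertibility of $A\oplus B$.
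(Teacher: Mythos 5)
Your proof is correct, and there is in fact no internal proof to compare it against: the paper states this lemma with a citation to Jameson (1968) and supplies no argument of its own, so your simultaneous-diagonalization derivation makes the appendix self-contained. The route you take is the natural one and is sound at every step: for Hermitian positive semidefinite matrices the SVD coincides with the spectral decomposition, the mixed-product rule turns the definition \cref{def_kronsum} into the unitary similarity $A\oplus B = (V\otimes U)(\Sigma_A\oplus\Sigma_B)(V\otimes U)^*$, the hypothesis that one factor is positive definite and the other positive semidefinite gives $\sigma_i+\mu_j>0$ for all $(i,j)$ and hence invertibility, and conjugating the inverted diagonal back term by term yields \eqref{invformula}; matricizing each rank-one term then produces \eqref{hada}.

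One caveat, which you partially anticipate when you mention "tracking the conjugations": over $\C$ with column stacking, the correct vectorization identity is $\vecc(AXB)=(B^T\otimes A)X_{\vecc}$, so that strictly $\big[\big((v_jv_j^*)\otimes(u_iu_i^*)\big)Z_{\vecc}\big]_{\mat} = u_iu_i^*\,Z\,(v_jv_j^*)^T = (u_i^*Z\overline{v_j})\,u_iv_j^T$; equivalently, $A\oplus B$ represents the Sylvester map $X\mapsto AX+XB^T$ rather than $X\mapsto AX+XB^*$. Your step invokes the paper's own stated identity ``$AXB^*=Y$ if and only if $(B\otimes A)X_{\vecc}=Y_{\vecc}$'', which suppresses exactly this conjugation, so your argument is internally consistent with the paper's conventions, and \eqref{hada} as written is literally correct for real matrices (or, in the complex case, after replacing $V$ by $\overline{V}$, which is clearly the intended reading given that the operator in \cref{left_mult_W} is defined directly through two-sided matrix multiplication). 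This is a shared imprecision of the paper, not a gap in your proof.
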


\section{Proofs of preliminary statements in \cref{sec_theoretical_analysis}}
\subsection{Proof of \cref{Wsubs}: Main part}

First, we define the function 
\begin{equation*}
\resizebox{ \textwidth}{!} 
{$\begin{aligned}
f^p_{X,\epsilon}(Z)&=\mathcal{J}_p(X,\epsilon,Z)=\begin{cases}\frac{p}{2} \|X_{\vecc}\|^2_{\ell_2(\widetilde{W}(Z))}+
\frac{\epsilon^2 p}{2}\sum\limits^d_{i=1}\sigma_i(Z)+\frac{2-p}{2}\sum\limits^d_{i=1}\sigma_i(Z)^{\frac{p}{(p-2)}} & \text{ if }\rank(Z)=d, \\
 +\infty &  \text{ if }\rank(Z) < d,
\end{cases}\\
\end{aligned}$}
\end{equation*}
 for $X\in M_{d_1\times d_2}$, $\epsilon>0$ fixed and with $Z \in M_{d_1\times d_2}$ as its only argument. 
 We note that the set of minimizers of $f^p_{X,\epsilon}(Z)$ does not contain an instance $Z$ with rank smaller than $d$ as the value of $f^p_{X,\epsilon}(Z)$ is infinite at such points and, therefore, it is sufficient to search for minimizers on the set $\Omega=\left\{Z \in M_{d_1\times d_2} | \rank(Z)=d\right\}$ of matrices with rank $d$. 
We observe that the set $\Omega$ is an open set and that we have that

\begin{enumerate}
\item[(a)] $f^p_{X,\epsilon}(Z)$ is lower semicontinuous, which means that any sequence $(Z^k)_{k \in \mathbb{N}}$ with $Z^k \stackrel{k\rightarrow \infty}{\longrightarrow} {Z}$ fulfills $\liminf\limits_{k\rightarrow \infty} f^p_{X,\epsilon}(Z^k)\geq f^p_{X,{\epsilon}}({Z})$,
 \item[(b)] $f^p_{X,\epsilon}(Z)\geq \alpha$ for all $Z \in M_{d_1\times d_2}$ for some constant $\alpha$,
\item[(c)] $f^p_{X,\epsilon}(Z)$ is coercive, i.e., for any sequence $(Z^k)_{k \in \mathbb{N}}$ with $\|Z^k\|_F\stackrel{k\rightarrow \infty}{\longrightarrow} \infty$, we have $f^p_{X,\epsilon}(Z^k)\stackrel{k\rightarrow \infty}{\longrightarrow}\infty$.
\end{enumerate}

Property (a) is true as $f^p_{X,\epsilon}(Z))|_{\Omega}$ is a concatenation of an indicator function of an open set, which is lower-semicontinuous and a sum of continuous functions on $\Omega$. Property (b) is obviously true for the choice $\alpha=0$. 

To justify point (c), we note that $f^p_{X,\epsilon}(Z)>\frac{\epsilon^2 p}{2}\sum\limits^d_{i=1}\sigma_i(Z)=\frac{\epsilon^2 p}{2}\|Z\|_{S_1}\geq \frac{\epsilon^2 p}{2}\|Z\|_{F}$ and therefore, coercivity is clear from its definition. As a consequence from (a) and (c), it is also true that the level sets $L_C=\left\{Z \in M_{d_1 \times d_2}| f^p_{X,\epsilon}(Z)\leq C\right\}$ are closed and bounded and therefore, compact.

Via the direct method of calculus of variations, we conclude from the properties (a) - (c) that $f^p_{X,\epsilon}(Z)$ has at least one global minimizer belonging to the set of critical points of $f^p_{X,\epsilon}(Z)$ \citep[Theorem 1]{Dacorogna89}.\\
To characterize the set of critical points of $f^p_{X,\epsilon}(Z)$, its derivative with respect to $Z$ is calculated explicitly and equated with zero in \cref{sec_proof_critical}. The solution of the resulting equation reveals that  
${Z}_{\opt}=\sum_{i=1}^d (\sigma_i^2(X)+\epsilon^2)^{\frac{p-2}{2}} u_iv_i^* =: \sum_{i=1}^d \widetilde{\sigma}_i u_iv_i^*$
is the only critical point and consequently the unique global minimizer of $f^p_{X,\epsilon}(Z)$.
We define 
 the matrices $W^L_{\opt}:=\sum^d_{i=1}\widetilde{\sigma}_i u_i u_i^*$ and $W^R_{\opt}:=\sum^{d}_{i=1} \widetilde{\sigma}_i v_i v_i^*$, and note that
$
\widetilde{W}(Z_{\opt})= 2\big((W^{R}_{\opt})^{-1} \oplus (W^{L}_{\opt})^{-1}\big)^{-1}
$
with \cref{J}. To verify the second part of the theorem, we simply plug the optimal solution $Z_{\opt}$ into the functional $\mathcal{J}_p$ and compute using \cref{invformula} that
\begin{equation*}
\resizebox{ \textwidth}{!} 
{$\begin{aligned} 
\mathcal{J}_p(X,\epsilon,Z_{\opt}) &= 
\frac{p}{2} \|X_{\vecc}\|^2_{\ell_2(\widetilde{W}(Z_{\opt}))}+
 \frac{\epsilon^2 p}{2}\sum\limits^d_{i=1}\widetilde{\sigma}_i+\frac{2-p}{2}\sum\limits^d_{i=1}\widetilde{\sigma}_i^{\frac{p}{p-2}}\\
 &=
\frac{p}{2} \sum^d_{i=1}\left[{\sigma}_i^2(X) (u_i^*\otimes v_i^*)2 \left(\sum\limits^{d_2}_{k=1} \sum\limits^{d_1}_{j=1} \frac{u_ku_k^*\otimes v_jv_j^*}{\widetilde{\sigma}^{-1}_k+\widetilde{\sigma}^{-1}_j}\right)(u_i\otimes v_i)\right]_{ii}+
 \frac{\epsilon^2 p}{2}\sum\limits^d_{i=1}\widetilde{\sigma}_i+\frac{2-p}{2}\sum\limits^d_{i=1}\widetilde{\sigma}_i^{\frac{p}{p-2}}\\
&=\frac{p}{2}\sum\limits^d_{i=1} ({\sigma}_i^2(X)+\epsilon^2) \widetilde{\sigma}_i+\frac{2-p}{2}\sum\limits^d_{i=1}\widetilde{\sigma}_i^{\frac{p}{p-2}} \\
&=\frac{p}{2}\sum\limits^d_{i=1} ({\sigma}_i^2(X)+\epsilon^2) ({\sigma}_i^2(X)+\epsilon^2)^{\frac{p-2}{2}}+\frac{2-p}{2}\sum\limits^d_{i=1}({\sigma}_i^2(X)+\epsilon^2)^{\frac{p}{2}} \\
&=\sum\limits^d_{i=1} ({\sigma}_i^2(X)+\epsilon^2)^{\frac{p}{2}}.
 \end{aligned}$}
\end{equation*}

\subsection{Proof of \cref{Wsubs}: Critical points of \texorpdfstring{$f^p_{X,\epsilon}$}{fpXepsilon}} \label{sec_proof_critical}
Let us without loss of generality consider the case $d=d_1=d_2$ and define 
$$
\Omega=\left\{Z\in M_{d\times d} \text{ s.t. } \rank(Z)=d  \right\}.
$$ 
As already mentioned in \cref{eq_Wtilde_W1_W2}, the harmonic mean matrix $\widetilde{W}(Z)$ can be then rewritten as 
\begin{equation*}
\widetilde{W}(Z) = 2 \widetilde{W}_1\big(\widetilde{W}_1 + \widetilde{W}_2\big)^{-1} \widetilde{W}_2 =2(\widetilde{W}_1^{-1}+\widetilde{W}_2^{-1})^{-1}
\end{equation*} 
for $Z \in \Omega$ with the definitions $\widetilde{W}_1:=\mathbf{I}_{d} \otimes (ZZ^*)^{\frac{1}{2}}$ and $\widetilde{W}_2=(Z^*Z)^{\frac{1}{2}} \otimes \mathbf{I}_{d}$. For $Z \in \Omega$, we reformulate the auxiliary functional such that
\begin{equation*}
\resizebox{0.9 \textwidth}{!}{$\begin{aligned}
f^p_{X,\epsilon}(Z)&=\mathcal{J}^p(X,\epsilon,Z)=\frac{p}{2} \|X_{\vecc}\|^2_{\ell_2(\widetilde{W}(Z))}+
\frac{\epsilon^2 p}{2}\sum\limits^d_{i=1}\sigma_i(Z)+\frac{2-p}{2}\sum\limits^d_{i=1}\sigma_i(Z)^{\frac{p}{(p-2)}}\\
&= \frac{p}{2} \|X_{\vecc}\|^2_{\ell_2(\widetilde{W}(Z))}+
\frac{\epsilon^2 p}{2}\| (Z^*Z)^{1/2}\|^2_{F}+\frac{2-p}{2}\|(Z^*Z)^{\frac{p}{2(p-2)}}\|^2_{F} .
\end{aligned}$}
\end{equation*} To identify the set of critical points of $f^p_{X,\epsilon}(Z)$ located in $\Omega$, we compute its derivative with respect to $Z$ using the derivative rules (7), (12), (13), (15), (16), (18), (20) in Chapter 8.2 and Theorem 3 in Chapter 8.4 of \citep{Magnus99} in the following. Using the notation of \citep{Magnus99}, we calculate
\begin{equation*}
\resizebox{ \textwidth}{!} 
{$\begin{aligned}
{\partial f^p_{X,\epsilon}(Z)}=-\frac{p}{2}\trace\left(X_{\vecc}^*\widetilde{W}{\partial\widetilde{W}^{-1}}\widetilde{W}X_{\vecc}\right)
+\frac{p\epsilon^2}{4}\left(\trace\left(Z(Z^*Z)^{-\frac{1}{2}}\partial Z^* \right)+\trace((Z^*Z)^{-\frac{1}{2}}Z^* \partial Z)\right) \\
-\frac{p}{4}\left(\trace\left(Z(Z^*Z)^{\frac{4-p}{2(p-2)}}\partial Z^* \right)+\trace((Z^*Z)^{\frac{4-p}{2(p-2)}}Z^* \partial Z)\right)  
\end{aligned}$}
\end{equation*}
 where
\begin{equation}\label{winddev}
\resizebox{ \textwidth}{!} 
{$\begin{aligned}
{\partial\widetilde{W}^{-1}}=\frac{1}{2}{\partial \left[(ZZ^*)^{-\frac{1}{2}}\oplus (Z^*Z)^{-\frac{1}{2}}\right]}=&
-\frac{1}{4}\left[\left((Z^*Z)^{-\frac{3}{2}}Z^* \partial Z+\partial Z^*Z(Z^*Z)^{-\frac{3}{2}} \right)\otimes \mathbf{I}_{d_1}\right]\\&-\frac{1}{4}\left[\mathbf{I}_{d_2}\otimes\left( \partial Z(ZZ^*)^{-\frac{3}{2}}Z^*+(ZZ^*)^{-\frac{3}{2}}Z\partial Z^* \right)\right].
\end{aligned}$}
\end{equation} We can reformulate the first term as follows using the cyclicity of the trace,
\begin{equation*}
\resizebox{ 0.9\textwidth}{!} {$\begin{aligned}
-\frac{p}{2}\trace\left(X_{\vecc}^*\widetilde{W}{\partial\widetilde{W}^{-1}}\widetilde{W}X_{\vecc}\right)&= \frac{p}{8}\left[ \trace\left((\widetilde{W}X_{\vecc})_{\mat}^*(\widetilde{W}X_{\vecc})_{\mat}(Z^*Z)^{-\frac{3}{2}}Z^*\partial Z \right)\right.\\
&+\left.\trace\left(Z(Z^*Z)^{-\frac{3}{2}}(\widetilde{W}X_{\vecc})_{\mat}^*(\widetilde{W}X_{\vecc})_{\mat}\partial Z^* \right)\right.\\
&\left.+\trace\left(Z^*(ZZ^*)^{-\frac{3}{2}}(\widetilde{W}X_{\vecc})_{\mat}(\widetilde{W}X_{\vecc})_{\mat}^*\partial Z \right)\right.\\
&\left.+\trace\left((\widetilde{W}X_{\vecc})_{\mat}(\widetilde{W}X_{\vecc})_{\mat}^*(ZZ^*)^{-\frac{3}{2}}Z\partial Z^* \right)\right].
\end{aligned}$}
\end{equation*}

To determine the critical points of $f^p_{X,\epsilon}(Z)$, we summarize the calculations above, rearrange the terms and equate the derivative with zero, such that
 \begin{equation*}
\resizebox{ \textwidth}{!} 
{$\begin{aligned}
{\partial f^p_{X,\epsilon}(Z)}=
\frac{p}{8}&\trace\left(\left[ (\widetilde{W}X_{\vecc})_{\mat}^*(\widetilde{W}X_{\vecc})_{\mat}(Z^*Z)^{-\frac{3}{2}}Z^*+Z^*(ZZ^*)^{-\frac{3}{2}}(\widetilde{W}X_{\vecc})_{\mat}(\widetilde{W}X_{\vecc})_{\mat}^*\right.\right.\\
&\left.\left.+2\epsilon^2 (Z^*Z)^{-\frac{1}{2}}Z^*- 2(Z^*Z)^{\frac{4-p}{2(p-2)}}Z^*\right]\partial Z \right)\\
+\frac{p}{8}&\trace\left(\left[ Z(Z^*Z)^{-\frac{3}{2}}(\widetilde{W}X_{\vecc})_{\mat}^*(\widetilde{W}X_{\vecc})_{\mat}+(\widetilde{W}X_{\vecc})_{\mat}(\widetilde{W}X_{\vecc})_{\mat}^*(ZZ^*)^{-\frac{3}{2}}Z\right.\right.\\
&\left.\left.+2\epsilon^2 Z(Z^*Z)^{-\frac{1}{2}}- 2Z(Z^*Z)^{\frac{4-p}{2(p-2)}}\right]\partial Z^* \right)\\
:= \frac{p}{8}&\trace\left(A\partial Z\right)+\frac{p}{8}\trace\left(A^*\partial Z^*\right)
= \frac{p}{8}\trace\left((A\oplus A)\partial Z\right)=0,
\end{aligned}$}
\end{equation*}
where 
\begin{equation} \label{eq_def_A}
\resizebox{ 0.9\textwidth}{!} 
{$\begin{split}
A&=\left[ (\widetilde{W}X_{\vecc})_{\mat}^*(\widetilde{W}X_{\vecc})_{\mat}(Z^*Z)^{-\frac{3}{2}}Z^*+Z^*(ZZ^*)^{-\frac{3}{2}}(\widetilde{W}X_{\vecc})_{\mat}(\widetilde{W}X_{\vecc})_{\mat}^*\right.\\
&\left.+2\epsilon^2 (Z^*Z)^{-\frac{1}{2}}Z^*- 2(Z^*Z)^{\frac{4-p}{2(p-2)}}Z^*\right].
\end{split}$}
\end{equation}
and hence an easy calculation as in \cite{Duchi07} gives
\begin{equation*}
\frac{\partial f^p_{X,\epsilon}(Z)}{\partial Z}=\frac{\frac{p}{8}\trace\left((A\oplus A)\partial Z\right)}{\partial Z}=\frac{p}{8}(A\oplus A)=0.
\end{equation*}

Now we have to find $Z$ such that $A\oplus A=0$. 
This implies that all eigenvalues of $A\oplus A=A\otimes \mathbf{I}_{d}+ \mathbf{I}_{d}\otimes A$ are equal to zero.
The eigenvalues of the Kronecker sum of two matrices $A_1$ and $A_2$ with eigenvalues $\lambda_s$ and $\mu_t$ with $s,t\in [d]$ are the sum of the eigenvalues $\lambda_s+\mu_t$. As in our case $A=A_1=A_2$ this means that all eigenvalues of $A$ itself have to be zero. This is only possible if $A$ is the zero matrix. 

Let $Z=U\Sigma V^*\in M_{d\times d}$ with ${U}, V\in \mathcal{U}_{d}$ and ${\Sigma}\in M_{d \times d}$, where $\Sigma = \diag(\sigma)$  is a diagonal matrix with \emph{ascending} entries.
We define the matrix $H=H_{i,j}=\frac{2}{\sigma^{-1}_i+\sigma^{-1}_j}$ for $i=1,\dots,d,j=1,\dots,d$ corresponding to the result of reshaping the diagonal of $2(\Sigma\oplus\Sigma)$ into a $d\times d$-matrix. Using \cref{hada}, we can express $(\widetilde{W}X_{\vecc})_{\mat}=U\big(H\circ(U^*XV)\big)V^*$ and denote $B:=H\circ(U^*XV)$. 

Plugging the decomposition $Z=U\Sigma V^*$ into \cref{eq_def_A}, we can therefore calculate 
\begin{equation}
\resizebox{ \textwidth}{!} 
{$\begin{aligned}
\text A=0 \Leftrightarrow&\;(UBV^*)^*(UBV^*)(V\Sigma^2V^*)^{-3/2}(U\Sigma V^*)^*+(U\Sigma V^*)^*(U\Sigma^2U^*)^*)^{-3/2}(UBV^*)(UBV^*)^*\\
&+2\epsilon^2(V\Sigma^2V^*)^{-1/2} (U\Sigma V^*)^*-2 (V\Sigma^2V^*)^{\frac{4-p}{2(p-2)}}(U\Sigma V^*)^*=0\\
\Leftrightarrow&\;VB^*B\Sigma^{-2} U^*+V\Sigma^{-2} BB^* U^*+2\epsilon^2 V\mathbf{I}_dU^*-2 V\Sigma^{\frac{2}{p-2}}U^*=0\\
\Leftrightarrow&\;B^*B\Sigma^{-2} +\Sigma^{-2} BB^*+2\epsilon^2 \mathbf{I}_d-2 \Sigma^{\frac{2}{p-2}}=0.\\
\end{aligned}$}\label{noorth}
\end{equation}

We now note that $2\epsilon^2 \mathbf{I}_d-2 \Sigma^{\frac{2}{p-2}}$ is diagonal and therefore, $B^*B\Sigma^{-2}+\Sigma^{-2} BB^*$ is diagonal as well. 
Moreover, observe that $B^*B+\Sigma^{-2} BB^*\Sigma^{2}$ is again a diagonal matrix and has a symmetric first summand $B^*B$. As the sum or difference of symmetric matrices is again symmetric also the second summand $\Sigma^{-2} BB^*\Sigma^{2}$ has to be symmetric, i.e., $\Sigma^{-2} BB^*\Sigma^{2}=(\Sigma^{-2} BB^*\Sigma^{2})^*=
\Sigma^{2} BB^*\Sigma^{-2}$. We conclude that it has to hold that $ BB^*\Sigma^4= \Sigma^4BB^*$ and hence $\Sigma^4$ and $BB^*$ commute. 

This is only possible if either $\Sigma$ is a multiple of the identity or if  $BB^*$ is diagonal. Assuming the first case, \cref{noorth} would imply that also $BB^*$ and $B^*B$ have to be a multiple of the identity. Therefore, this first case, where $\Sigma$ is a multiple of the identity is a special case of the second possible scenario, where $BB^*$ is diagonal. Hence, it suffices to further consider the more general second case. (Considerations for $B^*B$ can be carried out analogously.)

Diagonality of $BB^*$ only occurs if $B$ is either orthonormal or diagonal. Assuming orthonormality would lead to contradictions with the equations in \cref{noorth}. Hence $B=H\circ(U^* X V)$ can only be diagonal.

Let now be $X = \bar{U}\bar{S}\bar{V}^*$ the singular value decomposition of $X$. As $H$ has no zero entries due to the full rank of $W$, this implies the diagonality of $U^*\bar{U}\bar{S}\bar{V}^*V$.
Consequently, $ U$ and $V$ can only be chosen such that $P=[U^*\bar{U}]_{d\times d}$ and $P^*=[\bar{V}^*V]_{d \times d}$ for a permutation matrix $P \in \mathcal{U}_d$. The reshuffled indexing corresponding to $P$ is denoted by $p(i) \in [d]$ for $i\in [d]$. 
Having in mind that $H_{ii}=\sigma_i$ for $i \in [d]$, we obtain
\begin{equation*}
\resizebox{ \textwidth}{!} 
{$\begin{split}
&(H\circ(P\bar{S}P^*))^*(H\circ(P\bar{S}P^*))\Sigma^{-2} +\Sigma^{-2}(H\circ(P\bar{S}P^*))(H\circ(P\bar{S}P^*))^*+2\epsilon^2 \mathbf{I}_d-2 \Sigma^{\frac{2}{p-2}}=0\\
\Leftrightarrow& \quad 2 \bar{s}_{p(i)}^2+2\epsilon^2=2\sigma_i^{\frac{2}{p-2}} \text{ for all } i \in [d] \\
\Leftrightarrow& \quad \sigma_i= (\bar{s}_{p(i)}^2+\epsilon^2)^{\frac{p-2}{2}} \text{ for all } i \in [d].
\end{split}$}
\end{equation*}
As the diagonal of $\Sigma$ was assumed to have ascending entries and the diagonal of $\bar{S}$ has descending entries, the permutation matrix $P$ has to be equal to the identity matrix. From $P=\mathbf{I}_{d}$, it follows that $U=\bar{U}$ and $V=\bar{V}$ and hence $\Sigma=(\bar{S}^2+\epsilon^2\mathbf{I}_{d})^{\frac{p-2}{2}}$.

We summarize our calculations by stating that
$$Z_{\opt}=\bar{U} \Sigma \bar{V}^*=\bar{U} (\bar{S}^2+\epsilon^2\mathbf{I}_{d})^{\frac{p-2}{2}}\bar{V}^*$$
is the only critical point of $f^p_{X,\epsilon}$ on the domain $\Omega$.

The results extend for the case $d_1\neq d_2$, where the definition of $\widetilde{W}(Z)$ is adapted by introducing the Moore-Penrose pseudo inverse of $(ZZ^*)^{1/2}$
\begin{equation*}
\widetilde{W}(Z) = 2 \widetilde{W}_1\big(\widetilde{W}_1 + \widetilde{W}_2\big)^{-1} \widetilde{W}_2 =2(\widetilde{W}_1^{+}+\widetilde{W}_2^{-1})^{-1}.
\end{equation*}
The corresponding derivative rule as pointed out in \citep[Chapter 8.4, Theorem 5]{Magnus99} can be used for the calculation in \cref{winddev}.

\subsection{Proof of \cref{JminX_prop}} \label{sec_proof_JminX_prop}
The equality of the optimization problems \eqref{eq_opt_WLS} can easily be seen by the fact that only the first summand of $\mathcal{J}_p(X,\epsilon,Z)$ depends on $X$. Now, it is important to show first that
$\widetilde{W}(Z)= 2([(Z^*Z)^{\frac{1}{2}}]^+ \oplus [(ZZ^*)^{\frac{1}{2}}]^+)^{-1}$ is positive definite as minimizing $\mathcal{J}_p(X,\epsilon,Z)$ then reduces to minimizing a quadratic form. Let $Z=\sum^d_{i=1}\sigma_iu_iv_i^*$, where $u_i, v_i$ for $i \in [d]$ are the left and right singular vectors, respectively, and $\sigma_i$ for $i \in [d]$ are the singular values of $Z$. 
Since $Z^*Z=\sum^d_{i=1}\sigma_i^2v_iv_i^*\succeq 0$, also the generalized inverse root fulfills $[(ZZ^*)^{\frac{1}{2}}]^+ \succeq 0$ and for $ZZ^*=\sum^d_{i=1}\sigma_i^2u_iu_i^*\succeq 0$, it follows that $[(ZZ^*)^{\frac{1}{2}}]^{+} \succeq 0$. We stress that at least one of the matrices $(ZZ^*)^{\frac{1}{2}}$ and $(Z^*Z)^{\frac{1}{2}}$ is positive definite 
and hence also $\widetilde{W}(Z) \succ 0$. 
With the fact that $\widetilde{W}(Z) \succ 0$, the statement can be proven analogously to the results in \citep[Lemma 5.1]{Fornasier11}.
\subsection{Proof of \cref{prelim}}\label{sec_proof_prelim}
(a) With the minimization property that defines $X^{(n+1)}$ in \cref{minX}, the inequality $\epsilon^{(n+1)}\leq \epsilon^{(n)}$,  and the minimization property that defines ${Z}^{(n+1)}$ in \cref{minW} and \cref{Wsubs} the monotonicity follows from 
$$ \begin{aligned}
\mathcal{J}_p(X^{(n)},\epsilon^{(n)},{Z}^{(n)})&\geq \mathcal{J}_p(X^{(n+1)},\epsilon^{(n)},{Z}^{(n)}) \geq \mathcal{J}_p(X^{(n+1)},\epsilon^{(n+1)},{Z}^{(n)}) \\
&\geq \mathcal{J}_p(X^{(n+1)},\epsilon^{(n+1)},{Z}^{(n+1)})
\end{aligned}
$$
(b) Using \cref{Wsubs} and the monotonicity property of (a) for all $n \in \mathbb{N}$, we see that
$$
\|X^{(n)}\|_{S_{p}}^p \leq g_{\epsilon^{(n)}}^p(X^{(n)})= \mathcal{J}_p(X^{(n)},\epsilon^{(n)},{Z}^{(n)}) \leq \mathcal{J}_p(X^{(1)},\epsilon^{(0)},{Z}^{(0)}),
$$
(c) The proof follows analogously to \citep[Proposition 6.1]{Fornasier11} 
where only the technical calculation to bound $\sigma_1^p((\widetilde{W}^{(n)})^{-1})$ requires to take into account that the spectrum of a Kronecker sum $A \oplus B$ consists of the pairwise sum of the spectra of $A$ and $B$ \citep[Proposition 7.2.3]{Bernstein09}.

\subsection{Proof of \cref{charf}}\label{sec_proof_charf}
The first statement $\widetilde{W}(X^{(n)},\epsilon^{(n)}) = \widetilde{W}^{(n)}$ is clear from the definition of $\widetilde{W}(X,\epsilon)$ and \cref{Wtilde_firstdef}. To show the necessity of \cref{eq_charf}, let $X \in M_{d_1 \times d_2}$ be a critical point of \cref{gemin}. Without loss of generality, let us assume that $d_1 \leq d_2$. In this case, a short calculation shows that $g^p_\epsilon(X)=\trace\big[ (X X^* + \epsilon^{2} \mathbf{I}_{d_1})^{p/2}\big]$. 
It follows from the matrix derivative rules of \citep[Chapter 8.2, (7),(15),(18) and (20)]{Magnus99} that
\[
\nabla g_{\epsilon}^p (X) = p(XX^*+\epsilon^2\mathbf{I}_{d_1})^{\frac{p-2}{2}}X  = p \sum_{i=1}^d (\sigma_i^2 + \epsilon^{2})^{\frac{p-2}{2}}\sigma_i u_i v_i^*,
\]
using the singular value decomposition $X= \sum_{i=1}^d \sigma_i u_i v_{i}^*$ in the last equality. 
Using the Kronecker sum inversion formula \cref{invformula}, we see that
 $\nabla g_{\epsilon}^p (X) = p\big[\widetilde{W}(X,\epsilon)X_{\vecc}\big]_{\mat}.$ The proof can be continued analogously to \citep[Lemma 5.2]{Daubechies10}.

\section{Proof of \cref{conv}} \label{sec_proof_conv_app}
 For statement (i) of the convergence result of \cref{algo1}, we use the following \emph{reverse triangle inequalities} implied by the strong Schatten-$p$ NSP:  Let $X,X' \in M_{d_1 \times d_2}$ such that $\Phi(X-X')=0$. Then
\begin{equation} \label{eq_schatten_2_NSPestimate}
\|X' - X\|^p_{F} \leq \frac{2^p \gamma_r^{1-p/2}}{r^{1 - p/2}} \frac{1}{1-\gamma_r} \left(\|X'\|^p_{S_p}-\|X\|^p_{S_p}+2\beta_{r}(X)_{S_p}\right),
\end{equation}
where $\beta_{r}(X)_{S_p}$ is defined in \cref{beta}.
This inequality can be proven using an adaptation of the proof of the corresponding result for $\ell_p$-minimization in \citep[Theorem 13]{Gao15} and the generalization of Mirksy's singular value inequality to concave functions \citep{Audenaert14,Foucart16}. Furthermore, the proof of the similar statement in \citep[Theorem 12]{Kabanava15} can be adapted to show \cref{eq_schatten_2_NSPestimate}.

The further part of the proof of (i) as well as (ii) follow analogously to \citep[Theorem 6.11]{Fornasier11} and \citep[Theorem 5.3]{Daubechies10} using the preliminary results deduced in \cref{sec_theoretical_analysis}.
Statement (iii) is a direct consequence of \cref{rate}, which is proven in \cref{sec_localconvrate}.
\end{appendix}

\bibliography{Literature}
\end{document}